\theoremstyle{plain}
\newtheorem{thm}{Theorem}[section]
\newtheorem{cor}[thm]{Corollary}
\newtheorem{lem}[thm]{Lemma}
\newtheorem{prop}[thm]{Proposition}
\theoremstyle{definition}
\newtheorem{rem}[thm]{Remark}
\newtheorem{ex}[thm]{Example}
\newcounter{thestep}
\newenvironment{step}
{\medskip\refstepcounter{thestep}{{\textit{Step~\arabic{thestep}.}}}\begin{itshape}}
{\end{itshape}\medskip }
\newcounter{thehypothesis}
\newenvironment{hypothesis}
{\refstepcounter{thehypothesis}\begin{itemize}\item[(H\arabic{thehypothesis})]}
{\end{itemize}}
\newcounter{theproperty}
\newenvironment{property}
{\refstepcounter{theproperty}\begin{itemize}\item[(P\arabic{theproperty})]}
{\end{itemize}}
\newcommand{\compcent}[1]{\vcenter{\hbox{$#1\circ$}}}
\newcommand{\comp}{\mathbin{\mathchoice {\compcent\scriptstyle}{\compcent\scriptstyle}
{\compcent\scriptscriptstyle}{\compcent\scriptscriptstyle}}}
\renewcommand{\l}{\lambda}
\newcommand{\om}{\omega}
\newcommand{\Om}{\Omega}
\renewcommand{\a}{\alpha}
\renewcommand{\b}{\beta}
\newcommand{\e}{\varepsilon}
\newcommand{\duzhky}[1]{{\left ( #1 \right )}}
\newcommand{\im}[1]{{\mathrm{Im}\,  #1  }}
\newcommand{\re}[1]{{\mathrm{Re}\,  #1  }}
\newcommand{\grad}{{\mathrm{grad}\,}}
\newcommand{\sympred}{{/\!\! /}}
\newcommand{\cd}[1]{\nabla_{\! #1}\,}
\newcommand{\CS}{{\operatorname{\mathrm{CS}}}}
\newcommand{\C}{{\mathbb{C}}}
\newcommand{\R}{{\mathbb{R}}}
\DeclareMathOperator{\sign}{sign}
\DeclareMathOperator{\coker}{coker}
\DeclareMathOperator{\codim}{codim}
\DeclareMathOperator{\Ind}{Ind}
\newcommand{\ssst}[1]{{\scriptscriptstyle #1}}
\begin{document}

\title{Fukaya--Seidel category and gauge theory}%
\author{Andriy Haydys\\%
\textit{University of Bielefeld}\\%
}%

\date{September 21, 2012}


\maketitle

\begin{abstract}
A new construction of the Fukaya--Seidel category associated with a symplectic Lefschetz fibration is outlined. Applying this construction in an infinite dimensional case, a Fukaya--Seidel-type category is associated with a smooth three-manifold. In this case the construction is based on a five-dimensional gauge theory.
\end{abstract}

\section{Introduction}

This paper consists of two major parts. In the first part, based on the idea of Seidel~\cite{Seidel:01VanishingCyclesI} we outline a construction of the Fukaya--Seidel category, which is associated with a symplectic manifold $M$ equipped with the structure of a symplectic Lefschetz fibration. By this we mean, roughly speaking, a choice of an almost complex structure  $J$ and a $J$-holomorphic Morse function $f$.  This construction does not rely on the notion of vanishing cycle but emphasizes instead the role of the antigradient flow lines of $\re (e^{i\theta} f)$. In the second part, this construction is applied in the infinite dimensional case of the complex Chern--Simons functional. The corresponding construction conjecturally associates a Fukaya--Seidel--type category to a smooth three-manifold.

Our motivation originated from the suggestion to use higher dimensional gauge theory in studies of low dimensional manifolds as outlined in~\cite{Haydys:09_GaugeTheory_jlms}. Namely, suppose we are given a construction that associates a  higher dimensional manifold $W_X$  to each lower dimensional manifold $X$ from a suitable subclass and possibly equipped with an additional structure. The manifold $W_X$ is assumed to be of dimension $6,7$ or $8$ and endowed with an $SU(3),  G_2$ or $Spin(7)$ structure, respectively. Then, by counting  higher dimensional instantons on $W_X$ we should obtain an invariant of $X$. The construction studied in~\cite{Haydys:09_GaugeTheory_jlms} in detail associates to each smooth spin four-manifold the total space of its spinor bundle. 

Another construction of a similar nature associates to $X^4$ the total space of the ``twisted spinor bundle'' $\underline{\mathbb R}\oplus \Lambda^2_+T^*X$. Then $Spin(7)$-instantons invariant along each fibre are solutions of the Vafa--Witten equations~\cite{VafaWitten:94_StrongCouplingTest}, while $Spin(7)$-instantons invariant only along the fibres of $\Lambda^2_+T^*X$ can be interpreted as antigradient flow lines of a function, whose critical points are solutions of the Vafa--Witten equations. It turns out that these flow lines can be obtained from certain elliptic equations on a general five-manifold $W^5$ equipped with a nonvanishing vector field by specializing to the case $W=X^4\times\mathbb R$ just like flow lines of the real Chern--Simons functional are obtained from the anti-self-duality equations on $X^4=Y^3\times\mathbb R$.  Specializing further to $W=Y^3\times\mathbb R^2$ we obtain a construction of a Fukaya-type $A_\infty$-category (this requires some extra choices) just like specialization of the anti-self-duality equations to $\Sigma^2\times\mathbb R^2$ leads to the construction of the Fukaya $A_\infty$-category associated with $\Sigma$. At this point an important distinction from the case of Riemann surfaces emerges. Namely, the construction involves a natural holomorphic function, the complex Chern--Simons functional, and this has significant implications for the flavour of the construction. 

Having said this though, we do not appeal in this paper to higher dimensional anti-self-duality equations but rather begin directly with the formulation of the five-dimensional gauge theory. From this perspective the most interesting theory is obtained via reduction to three-manifolds, where the construction of the $A_\infty$-category  admits a finite-dimensional interpretation in the framework of symplectic geometry.

\medskip

This paper is organized as follows. In Section~\ref{Sect_FukSeidelCateg} we describe the construction of the Fukaya--Seidel category in the finite-dimensional case. From one point of view, this construction is a generalization of a Floer theory, where \emph{generators} of the homology groups are antigradient flow lines of the real part of a holomorphic Morse function connecting a pair of critical points. Then the Floer differential is obtained from pseudoholomorphic planes with a Hamiltonian perturbation satisfying certain asymptotic conditions (see~\eqref{Eq_PseudoholomPlaneWithHamPert_mod}--~\eqref{Eq_PseudoholomPlaneWithHamPert_BC_s} for more details).

Sections~\ref{Sect_5dGaugeTheory} and~\ref{Sect_DimReductions} are devoted to the formulation of the five-dimensional gauge theory and its various dimensional reductions. In Section~\ref{Sect_Invariants} we describe applications of the equations obtained in the previous sections to low dimensional topology. In particular, one can (conjecturally) associate an integer to a five-manifold, Floer-type homology groups to a four-manifold and a Fukaya--Seidel-type category to a three-manifold. In dimension three, critical points correspond to flat $G^c$-connections on $Y$, flow lines correspond to Vafa--Witten-type instantons on $Y\times\mathbb R$ and pseudoholomorphic planes correspond to ``five-dimensional instantons" on $Y\times\mathbb R^2$. This should be a part of a multi-tier (extended) quantum field theory~\cite{Freed:09_RemarksOnChernSimons} but we do not study this aspect in the current paper.

The constructions described in this paper may also be useful in other settings, for instance in the context of Calabi--Yau threefolds. Here the critical points of the holomorphic Chern--Simons functional correspond to holomorphic vector bundles over a Calabi--Yau threefold $Z$, flow lines correspond to $G_2$-instantons on $Z\times\mathbb R$ and pseudoholomorphic planes correspond to $Spin(7)$-instantons on $Z\times\mathbb R^2$. 

\medskip

Many aspects of this paper are related to ideas of various authors. As it has been already mentioned above, our construction of the Fukaya--Seidel category in the finite dimensional case is a modification of Seidel's idea. The equation we utilize for the definition of the structure maps in the Fukaya--Seidel $A_\infty$-category was used in the context of mirror symmetry by Fan--Jarvis--Ruan~\cite{FanJarvisRuan:07_WittenEqn} (``Witten equation'') in the case of quasi-homogeneous polynomials. The antigradient flow lines of the real part of the holomorphic Chern--Simons functional appeared in~\cite{KapustinWitten:07_ElectrMagnDuality} for the first time and were further studied by Witten~\cite{Witten:10_AnalyticContinuation,Witten:11_NewLookAtPathIntegral}. Donaldson and Segal~\cite{DonaldsonSegal:09} used antigradient flow lines of the real part of the holomorphic Chern--Simons functional in the context of Calabi--Yau threefolds.  

\medskip

\textsc{Acknowledgements:} I thank S.Bauer, S.Donaldson, V.Pidstrygach, V.Rabinovich, Y.Ruan, D.Salamon and P.Seidel for helpful discussions and also B.Himpel for reading the draft of this paper. I acknowledge the financial support of the German Research Foundation (DFG) and the hospitality of Imperial College London, where part of this work was carried out.


\section{Fukaya--Seidel categories of symplectic Lef\-schetz fibrations}\label{Sect_FukSeidelCateg}

In~\cite{Seidel:01VanishingCyclesI, Seidel:08FukayaCategories} Seidel describes the construction of a Fukaya category associated with a symplectic Lefschetz fibration  in terms of vanishing cycles. In the first part of this section we describe omitting (important) technical details  an alternative approach, which does not rely on the notion of vanishing cycle. The rest of the section is devoted to basic analytic properties of the objects involved in the construction.

\subsection{Symplectic Lefschetz fibrations}

Let $(M^{2n},\om,\l),\   \om=d\l$, be an exact symplectic manifold with boundary. Choose an almost complex structure $J$ such that $g(\cdot,\cdot)=\om(\cdot, J\cdot)$ is a Riemannian metric on $M$. It is also convenient to assume that $J$ is orthogonal with respect to $g$. Let $f\colon M\rightarrow\mathbb C$ be a  $J$-holomorphic function. 
We assume the following properties:
\begin{property}\label{Pty_Properness}
  $f$ is a proper map with finitely many non--degenerate critical points lying in pairwise different fibres. Moreover,  locally near each critical point $J$ is integrable.
\end{property}
\begin{property}\label{Pty_ConvexBoundary}
The boundary of $M$ is weakly $J$--convex.
\end{property}
\begin{property}\label{Pty_LocTrivAwayFromK}
  Let $M_0=f^{-1}(z_0)$ be a regular fiber. Then there exist compact subsets $K\subset M\setminus\partial M$, $K'\subset M_0\times\C\setminus\partial M_0\times\C$, and a positive number $r$ with the following significance. Denote $V=M\setminus K,\ V'=M_0\times\C\setminus K'$. Then for each $z\in\C$ there exists a small neighbourhood $B_\delta(z)$ and a fiber preserving diffeomorphism  $\psi_z$ such that the following holds: The diagram
\begin{diagram}
V\cap \bigl( M_0\times B_\delta(z)\bigr ) &  & \rTo^{\quad\psi_z\quad} & &V'\cap f^{-1}\bigl (B_\delta(z)\bigr )\\
   &\rdTo^{pr_2}      &  &\ldTo^{f} & \\
    &             &   B_\delta (z)   &
\end{diagram}
commutes, $\psi_z$ is the identity on $(M_0\times\{z_0 \})\cap V$ whenever $z_0\in B_\delta(z)$, and  the pull--back of $(\l, J)$ is $(\l_{M_0}+r\l_{0}, J_{M_z}\times I_{0})$. Here $\l_{0}=\re{(izd\bar z)}$ is the primitive of the standard symplectic form $\om_0$ and $I_{0}$ is the standard complex structure on $\C$.
\end{property}


It is worth pointing out that properties~(P\ref{Pty_Properness}) and~(P\ref{Pty_LocTrivAwayFromK}) imply that there exists $R>0$ such that the preimage of $B_R^c(0)=\{  |z|> R\}$ is contained in $V$. In other words, for any $z\in B_R^c(0)$ there exists a 
neighbourhood  $B_\delta(z)\subset B_R^c(0)$, and fiber preserving diffeomorphism  $\psi_z\colon M_0\times B_\delta(z)\to f^{-1}(B_\delta(z))$ with the properties as in~(P\ref{Pty_LocTrivAwayFromK}). Similarly, there exists a neighbourhood $W$ of $\partial M$, a neighbourhood $W'$ of $\partial (M_0\times\C)=\partial M_0\times\C$, and a diffeomorphism $\psi\colon W'\to W$ such that the pull-back of  $(\l, J)$ is $(\l_{M_0}+r\l_{0}, J_{M_0}\times I_{0})$. Here $M_0$ is some fiber. Conversely, these two properties imply~(P\ref{Pty_LocTrivAwayFromK}).

Denote
\[
f= f_0+i f_1,\qquad\rho= \{f_0, f_1 \}.
\]
An easy computation shows that on $V$ we have $\rho=r^{-1}$.  In particular, this implies that $\rho$ is bounded on $M$.

The following interpretation of $\rho$ will be useful in the sequel. Denote
\begin{equation*}
    v_0=\grad  f_0 \quad\text{and}\quad v_1=\grad f_1.
\end{equation*}
The holomorphicity of $f$  implies $Jv_0=v_1$. Then the Hamiltonian vector field of $f_0$ is $X_{f_0}=-Jv_0=-v_1$. This yields
\begin{equation}\label{Eq_FuncRho}
\rho (m)=| v_0(m) |^2=|v_1(m)|^2.
\end{equation}

\begin{rem}
  It is interesting to notice that property~(P\ref{Pty_LocTrivAwayFromK}) is in fact equivalent to $\rho$ being constant on a complement of a compact subset. Indeed, assume $\rho$ is constant on $V=M\setminus K$. Then the identity $[v_0, v_1]=-[X_{f_1}, X_{f_0}]=X_{\{f_0,f_1\}}$
implies that $v_0$ and $v_1$ commute on $V$. The subset $M\setminus \mathrm{Crit}(f)$  is equipped with the connection, which is induced by the symplectic form. Then   $\rho^{-1}v_0$ and $\rho^{-1}v_1$ are the horizontal lifts of $\tfrac \partial{\partial s}$ and $\tfrac\partial{\partial t}$,  respectively, where  $(s,t)$ be coordinates on $\C\cong \R^2$. Hence, the connection is flat over $V$. It follows that in a flat trivialization in a neighbourhood of some $z\in \C$ the symplectic form can be written as $\om_{M_z}+r\om_0$, where $r$ is some function. Then $r$ is constant, since $r^{-1}=\rho$.
\end{rem}

Examples of the fibrations with properties (P\ref{Pty_Properness})--(P\ref{Pty_LocTrivAwayFromK}) can be found in~\cite[(19b)]{Seidel:08FukayaCategories} (it is only needed to drop the restriction to the preimage of a large disc).

\medskip

Other examples can be constructed starting from symplectic Lefschetz fibrations over the disc $\pi\colon E\to D=B_1(0)$ as in~\cite[(15a)]{Seidel:08FukayaCategories} assuming triviality near the horizontal boundary~\cite[Remark~15.2]{Seidel:08FukayaCategories}. Indeed, first of all on an open neighbourhood of $\partial^hE$ diffeomorphic to an open neighbourhood of the horizontal boundary of the trivial fibration $E_{pt}\times D$ with the help of a suitable cut-off function we can deform  the symplectic form to $\om_{E_{pt}}$. This is clearly no longer symplectic form on the horizontal subbundle but later on we will add some multiple of the standard symplectic form on $D$ so that the resulting 2--form will be symplectic on the total space. 

To extend $E$ to a fibration over the whole complex plane proceed as follows. Choose $\delta>0$ such that all critical values of $\pi$ are contained in $B_{1-\delta}(0)$. With the help of the parallel transport along radial lines we obtain 
\begin{equation}\label{Eq_AuxIsomorphismOfLefschetzFibr}
 \bigl. E\bigr |_{Z}\cong pr^*\bigl. E\bigr |_{S^1_{1-\delta}},
\end{equation}
where $pr\colon Z=\{1-\delta\le |z|\le 1\}\cong S^1_{1-\delta}\times [1-\delta, 1]\to  [1-\delta, 1]$. If $(\varrho,\varphi)$ denote the polar coordinates we can write~\cite[(15a)]{Seidel:08FukayaCategories} the symplectic 2--form on $pr^*\bigl. E\bigr |_{S^1_{1-\delta}}$ in the form 
\[
\om = \om_{E_{1-\delta,0}}+ d\kappa,
\]  
where $\kappa=\kappa_1(\varrho, \varphi) d\varrho + \kappa_2(\varrho, \varphi) d\varphi + dR$ for some functions $\kappa_1,\kappa_2, R\in C^\infty\bigl ([1-\delta, 1]\times S^1\times E_{1-\delta, 0}\bigr )$ (the notation does not reflect the dependence on all variables). Choose smooth cut--off functions $\alpha, \beta\colon [1-\delta,+\infty]\to [0,1]$ such that
\[
\alpha(\varrho)=
\begin{cases}
  \varrho\quad &\varrho\in [1-\delta, 1-\frac {2\delta}3],\\
 1   & \varrho\ge 1-\frac \delta 3,
\end{cases}
\qquad\quad
\beta(\varrho)=
\begin{cases}
  1\quad &\varrho\in [1-\delta, 1-\frac {2\delta}3],\\
  0  & \varrho\ge 1-\frac \delta 3,
\end{cases}
\]
and denote $\kappa' =\kappa_1(\alpha(\varrho),\varphi)\, d\varrho + \kappa_2(\alpha(\varrho), \varphi)\, d\varphi + d(\beta R)$. This defines a connection 1--form on 
\begin{equation}\label{Eq_AuxFibrationOverCylinder}
\bigl. E\bigr |_{S^1_{1-\delta}}\times [1-\delta,+\infty)\xrightarrow{\quad p\quad }
\bigl \{|z|>1-\delta \bigr \}.
\end{equation}
Then for sufficiently large $r>0$ the 2--form  $\om_r= \om_{E_{1-\delta,0}} +d\kappa' + rp^*\om_0$
is symplectic and equals to $\om_E+r\pi^*\om_0$ over $\{1-\delta <|z|<1-\frac{2\delta}3 \}$.  Hence, $\bigl. E\bigr|_{B_{1-\delta}(0)}$ can be glued with~\eqref{Eq_AuxFibrationOverCylinder} to obtain a fibration over the whole complex plane. By construction, this has properties (P\ref{Pty_Properness})--(P\ref{Pty_LocTrivAwayFromK}).



\subsection{Outline of the construction}\label{Subsect_OutlineOfConstruction}
The purpose of this subsection is to outline the main points of the alternative construction of the Fukaya--Seidel $A_\infty$-category. The discussion of technical details is postponed to the proceeding subsections.


\medskip

Let us briefly recall the basic ingredients of the Fukaya--Seidel $A_\infty$-category (see~\cite{ Seidel:01VanishingCyclesI,Seidel:08FukayaCategories} for details).  For the sake of simplicity we consider the ungraded version with coefficients in $\mathbb Z/2\mathbb Z$ (``preliminary version'' in the terminology of~\cite{Seidel:08FukayaCategories}). It is convenient to choose a basepoint $z_0$, which does not lie on any straight line determined by a pair of critical values (in particular, $z_0$ is distinct from critical values). Denote by $m_1,\dots, m_k$ critical points of $f$ and put $z_j=f(m_j)$. The indexing can be chosen such that the sequence $\arg (z_j-z_0)\in (-\pi, \pi]$ is decreasing 
in $j$ and this defines a linear order on the set of critical points.  

Choose a collection of paths connecting $z_0$ with each $z_j$ missing the remaining critical values. Let $L_j\subset f^{-1}(z_0)$ be the vanishing cycle of $m_j$ associated with the path connecting $z_0$ and  $z_j$.  Denote by $\Gamma$  the ordered collection $(L_1,\dots, L_k)$. Seidel associates to $\Gamma$ a directed Fukaya $A_\infty$-category $Lag^\to(\Gamma)$, whose objects are vanishing cycles $L_j$ and morphisms are Floer chain complexes  as follows. First recall that an $A_\infty$-structure is a collection of maps%
\begin{equation*}
 \mu^d\colon hom(L_{j_d}, L_{j_{d+1}})\otimes\dots\otimes hom(L_{j_1}, L_{j_{2}})\longrightarrow hom(L_{j_1}, L_{j_{d+1}}),\quad d=1,2,3,\dots
\end{equation*}
satisfying certain quadratic relations and by the directedness we have%
\begin{equation*}
  hom(L_j, L_k)=
   \begin{cases}
    CF(L_j, L_k) & j<k,\\
    \mathbb Z/2\cdot id &j=k,\\
    0 &j>k.
   \end{cases}
\end{equation*}
The Floer complex $CF(L_{j}, L_{k})$ is generated by the points of $L_j\cap L_k$ and the map $\mu^1$ is the Floer differential, which counts pseudoholomorphic strips such that one boundary component is mapped to $L_j$ and the other component is mapped to $L_k$. The maps $\mu^d$ for $d\ge 2$  are defined similarly by counting pseudoholomorphic discs with $d+1$ punctures on the boundary. The resulting $A_\infty$-category $Lag^\to(\Gamma)$ depends on the choices made but Seidel shows that the derived category $D^b(Lag^\to(\Gamma))$ is an invariant of the Lefschetz fibration.

\medskip

With this understood we now give another construction of the Fukaya--Seidel $A_\infty$-category. 
Pick a pair of critical points $(m_-,
m_+)$ and denote $\theta_\pm=\arg(z_\pm -z_0)\in (-\pi,\pi]$.  Let  $\gamma_m^\pm$ be the solution of the Cauchy problem
\begin{equation*}
 \dot\gamma_m^\pm +\cos\theta_\pm\, v_0 +\sin\theta_\pm\, v_1=0,\quad \gamma_m^\pm(0)=m\in f^{-1}(z_0).
\end{equation*}
Notice that the image of $f\comp\gamma_m^\pm\colon\mathbb R\rightarrow\mathbb C$ is contained in a straight line passing through $z_0$ and $z_\pm$. Then the vanishing cycle $L_\pm$ of $m_\pm$  associated with the segment $\overline{z_0z_\pm}$  can be
conveniently described as
\begin{equation*}
 L_\pm=\bigl\{ m\in f^{-1}(z_0)\; \bigl |\bigr .\ \;  \lim\limits_{t\to +\infty}\gamma_m^\pm(t)=m_\pm  \bigr\}.
\end{equation*}
Then, if we denote
\begin{equation}\label{Eq_FunctionTheta0}
\theta_0(t)=\begin{cases}
                      \theta_+ &\quad t\le 0,\\
                      \arg i(z_--z_0)=\theta_- \pm \pi &\quad  t>0,
                    \end{cases}
\end{equation}
the set  $L_+\cap L_-$ can be identified with the space of solutions of the problem%
\begin{equation}\label{Eq_BrokenFlowLine}
  \begin{aligned}
     &\ \;\dot \gamma +\cos\theta_0(t)\, v_0 +\sin\theta_0(t)\, v_1=0,\qquad
     \lim\limits_{t\to \pm\infty} \gamma(t) =m_\mp.
  \end{aligned}
\end{equation}
Here solutions are understood to be smooth on $\mathbb R\setminus \{0\}$ and continuous at $t=0$. We call solutions of~\eqref{Eq_BrokenFlowLine} \emph{broken flow lines} of $f$ connecting $m_-$ and $m_+$ and
denote by $\Gamma_0(m_-;m_+)$ 
the space of all solutions. Notice that for each broken antigradient flow line
$\gamma$ the image of $f\comp\gamma$ lies on the curve $\overline{z_-z_0z_+}$
and $f\comp\gamma(0)=z_0$.

It will be convenient in the sequel to replace $\theta_0$   by a smooth function $\theta_\nu$, where $\nu$ is a real parameter. The choice of the function $\theta_\nu$, which is described in Subsection~\ref{Subsect_AprioriC0Estim} in details,  turns out to be quite important, but what we need to know at this point is that $\theta_\nu$ is close to $\theta_0$ for $\nu$ small enough. 



Denote by $\Gamma_\nu=\Gamma_\nu (m_-, m_+)$ the space of solutions of the problem%
\begin{equation}\label{Eq_PertBrokenFlowLine}
     \dot \gamma +\cos\theta_\nu(t)\, v_0 +\sin\theta_\nu(t)\, v_1=0,\qquad 
     \lim\limits_{t\to \pm\infty} \gamma(t) =m_\mp.
\end{equation}
We  also call solutions of equations~\eqref{Eq_PertBrokenFlowLine} broken flow lines.

\begin{rem}
We assume that for $\nu$ small enough there exists a correspondence between solutions of~\eqref{Eq_PertBrokenFlowLine}  and~\eqref{Eq_BrokenFlowLine}. This is discussed in detail in Appendix~\ref{App_OnThePertAntigradFlowLines}.
\end{rem}


Further, notice that the Floer differential $\mu^1$ should take broken flow lines as
input and should return formal  linear combinations of broken flow lines as output. With $m_\pm$ as
above, pick additionally two solutions $\gamma_\pm$ of equations~\eqref{Eq_PertBrokenFlowLine}. Then the role
of holomorphic strips
with boundary on $L_\pm$ in our framework is played by solutions of the problem%
\begin{align}
    &\ \;\partial_s u + J\bigl (\partial_t u+\cos\theta_\nu(t)\, v_0 +\sin\theta_\nu(t)\, v_1\bigr )=0, & &\ \; u\colon \mathbb R^2_{s,t}\rightarrow M,\label{Eq_PseudoholomPlaneWithHamPert_mod}\\
    &\lim\limits_{t\to\pm\infty} u(s,t)= m_\mp,  & &\lim_{t\to\pm\infty}\int_{-\infty}^{+\infty} |\partial_s u(s,t)|\, ds=0,\label{Eq_PseudoholomPlaneWithHamPert_BC_t}\\
&  \lim\limits_{s\to\pm\infty} u(s,t)=\gamma_\mp(t),
   &\qquad  &\lim_{s\to\pm\infty}\int_{a}^b |\partial_su(s,t)|\, dt=0.\label{Eq_PseudoholomPlaneWithHamPert_BC_s}
  \end{align}
Here the limits appearing on the left hand side of~\eqref{Eq_PseudoholomPlaneWithHamPert_BC_t}  and~\eqref{Eq_PseudoholomPlaneWithHamPert_BC_s} are understood in the $C^0(\R)$--topology and $a \le b$ are arbitrary.  Notice  that~\eqref{Eq_PseudoholomPlaneWithHamPert_mod} is the pseudoholomorphic map equation with a Hamiltonian perturbation. Namely, the time-dependent Hamiltonian function here is $\im{(e^{-i\theta_\nu(t)}f)}$. 

Notice also that it is assumed  that the integral in~\eqref{Eq_PseudoholomPlaneWithHamPert_BC_t} is convergent for all $t\in\R$. For instance, this is the case if $\partial_su\in W^{k,p}(\R^2; u^*TM)$ with $k>\max \{\tfrac 1p,\tfrac 2p -1\}$. In this case, we have

\[\label{Eq_AuxEstimForTrace}
\| \partial_su (\cdot, \tau)\|_{W^{0,1}(\R)}\le C_{k,p}\| \partial_s u \|_{W^{k,p}(H_\tau)},
\]
where $H_\tau=\{t\ge\tau\}\subset\R^2$. In particular, $\int_{-\infty}^{+\infty} |\partial_s u(s,t)|\, ds$ tends to zero as $t\to+\infty$ and similarly for $t\to -\infty$.

It is very instructive to see a relation between solutions of~\eqref{Eq_PseudoholomPlaneWithHamPert_mod}--\eqref{Eq_PseudoholomPlaneWithHamPert_BC_s} and
pseudoholomorphic strips as in Seidel's approach. This is outlined in
Appendix~\ref{Apend_PseudoholomStripsAndPlanes}. However, instead of proving that such a connection indeed holds, we study equations~\eqref{Eq_PseudoholomPlaneWithHamPert_mod}--\eqref{Eq_PseudoholomPlaneWithHamPert_BC_s}  directly, since in view of the intended applications it is important to have direct proofs of the basic properties (compactness, Fredholm property, transversality  etc.). In this paper we prove compactness and Fredholm property for solutions of~\eqref{Eq_PseudoholomPlaneWithHamPert_mod}--\eqref{Eq_PseudoholomPlaneWithHamPert_BC_s}.

\medskip

Next we show how to define the map $\mu^2$ in our framework. Let $\Om$ be a (non-compact) Riemann surface containing
three ``long necks''. By this we mean a triple of holomorphic embeddings%
\begin{equation*}
 \imath_1,\imath_2\colon \{ z\; |\ \re z < 0  \} \rightarrow\Om,\qquad \text{and}\qquad  
\imath_3\colon \{ z\; |\ \re z > 0  \} \rightarrow\Om%
\end{equation*}
with disjoint images. To be more explicit,  we choose the complex plane $\mathbb C$ as a model for $\Om$ (see
Fig.~\ref{Fig_2dDomain}), where the embedding $\imath_1$ is given in polar coordinates by $(\varrho,\varphi)\mapsto (\varrho^{2/3},\tfrac
23(\varphi +\pi)),\ \tfrac \pi 2< \varphi <\tfrac {3\pi}2$ and the other two embeddings are defined similarly. The
curves shown on the figure are of the form $t\mapsto \imath_j(s,t)$. This is  our analogue of the ``pair of pants''
surface. 
\begin{figure}[ht]
   \begin{center}
    \includegraphics[width=0.4\textwidth]{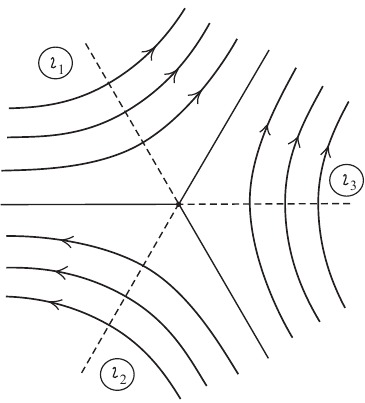}
   \end{center}
\caption{ The domain $\Om$ with three long necks.}
\label{Fig_2dDomain}
\end{figure}

Further, pick any three critical points, say $m_1,m_2, m_3$  and a pair $(\gamma_1,\gamma_2)$  of broken flow lines. 
More precisely, $\gamma_1$ and $\gamma_2$ are solutions of the equations%
\begin{equation*}
  \begin{aligned}
     &\ \;\dot \gamma_j +\cos\theta_{j,\nu}(t)\, v_0 +\sin\theta_{j,\nu}(t)\, v_1=0,\\
     &\lim\limits_{t\to +\infty} \gamma_j(t) =m_j,\quad \lim\limits_{t\to -\infty} \gamma_j(t) =m_{j+1}.
  \end{aligned}
\end{equation*}
Here $\theta_{j,\nu}(t)$ is a perturbation of the function obtained from $\theta_0(t)$ by putting  $(\theta_-,\theta_+)=(\arg (z_j-z_0),\; \arg(z_{j+1}-z_0))$.
Then $\mu^2(\gamma_1,\gamma_2)$  should be a formal linear combination of broken
 flow  lines connecting $m_1$ with $m_3$. Pick any such flow line, i.e., a solution of the
problem\footnote{Our convention is that for $m_-<m_+$ a broken flow line goes from $m_+$ to $m_-$ as $t$
varies between $-\infty$ and $+\infty$ and therefore the asymmetry between $\gamma_3$ and $\gamma_1,\gamma_2$. }%
\begin{equation*}
  \begin{aligned}
     &\ \;\dot \gamma_3+\cos\theta_{3,\nu}(t)\, v_0 +\sin\theta_{3,\nu}(t)\, v_1=0,\\
     &\lim\limits_{t\to +\infty} \gamma_3(t) =m_1,\quad \lim\limits_{t\to -\infty} \gamma_3(t) =m_{3},
  \end{aligned}
\end{equation*}
and also choose  $\eta\in\Om^{0,1}(\Om)$ such that for $j=1,2,3$ we have $\imath_j^*\eta=\tfrac 12e^{i\theta_{j,\nu}(t)}d\bar z$ provided  $\bigl |\re z\bigr |\ge 1$. 
Then the multiplicity of $\gamma_3$ can  conjecturally be defined by counting solutions of the equations%
  \begin{align}
     &\ \;\bar\partial u +\eta\otimes v_0(u)=0, &  \qquad & u\colon\Om\rightarrow M,\label{Eq_Mu2_1}\\
  &\lim\limits_{t\to \pm\infty} u\comp\imath_j (s,t) =m_{\sigma_\pm(j)},\  & &
       \lim\limits_{t\to\pm\infty} \int_{0}^{\infty}\bigl | \partial_s\bigl (u\circ\imath_j(s,t) \bigr ) \bigr |\, ds=0,\quad  j=1,2,3, \label{Eq_Mu2_2}\\    
&\lim\limits_{s\to \infty} u\comp\imath_j (s,t) =\gamma_j(t)\    & & \lim\limits_{s\to\infty}\int_{a}^b\bigl | \partial_s\bigl (u\circ\imath_j(s,t) \bigr ) \bigr |\, dt=0,\quad  j=1,2,3.\label{Eq_Mu2_3} 
     \end{align}
Here $\eta\otimes v_0(u)\in\Om^{0,1}(\Om; u^*TM)$, $\sigma_+(1,2,3)=(1,2,1)$, $\sigma_-(1,2,3)=(2,3,3)$. Moreover, in~\eqref{Eq_Mu2_3} $"s\to\infty"$ means $s\to -\infty$ for $j=1,2$ and $s\to +\infty$ for $j=3$; The meaning of $"\infty"$ in~\eqref{Eq_Mu2_2} is similar.

 Notice that over the long necks the above equations and equations~\eqref{Eq_PseudoholomPlaneWithHamPert_mod}--\eqref{Eq_PseudoholomPlaneWithHamPert_BC_s} are of a similar form.

The analogue of holomorphic discs with $d+1$ punctures on the boundary involved in the definition of $\mu^d$ are defined
in a similar manner.


\medskip

Let us briefly summarize. We can conjecturally associate with $(f,J)$ a directed $A_\infty$-category $\mathcal A(f, J)$ as follows. The objects of $\mathcal A(f, J)$ are critical points of
$f$. For any pair  $(m_-, m_+)$ of critical points,  denote by $CF(m_-, m_+)$ the  vector space generated by $\Gamma_\nu(m_-; m_+)$ and put %
\begin{equation*}
  hom_{\mathcal A(f,J)}(m_-, m_+)=%
  \begin{cases}
    CF(m_-, m_+)  & m_-< m_+,\\
    \mathbb Z/2\cdot id  & m_-=m_+,\\
    0  &m_->m_+.
  \end{cases}
\end{equation*}
For $\gamma_\pm\in \Gamma_\nu(m_-; m_+)$ denote by $\mathcal M_\nu^0(\gamma_-,\gamma_+)$ the zero-dimensional component of the space %
$\{ u\; |\; u\ \text{solves~\eqref{Eq_PseudoholomPlaneWithHamPert_mod}--\eqref{Eq_PseudoholomPlaneWithHamPert_BC_s}} \}/\mathbb R$. Assuming $\#\mathcal M_\nu^0(\gamma_-,\gamma_+)$ makes sense, we can define $\mu^1$ by declaring
\begin{equation*}
 \mu^1(\gamma_-)=\sum_{\gamma_+} \bigl (\#\mathcal M_\nu^0(\gamma_-,\gamma_+)\!\!\!\!\mod 2\bigr )\, \gamma_+.
\end{equation*}
The maps $\mu^d$ for $d\ge 2$ are defined in a similar manner and together with $\mu^1$ (conjecturally) combine to an
$A_\infty$-structure. Clearly, $\mathcal A(f,J)$ depends on the various choices involved in the construction. However,
as explained  in~\cite{Seidel:01VanishingCyclesI} the derived category $D^b(\mathcal A(f,J))$ should not depend on these
choices. Moreover, assume $(f_\tau, J_\tau),\ \tau\in [0,1]$ is a continuous family such that $f_\tau$ is a
$J_\tau$-holomorphic function, whose critical points lie in pairwise different fibres for all $\tau$. Then 
$D^b(\mathcal A(f_0,J_0))$ is equivalent to $D^b(\mathcal A(f_1,J_1))$.

\begin{rem}\label{Rem_FSCatForMultivaluedFns}
Our main example is the complex Chern--Simons functional, which takes values in $\mathbb C/\mathbb Z$ rather than in $\mathbb C$. In this case, the construction outlined above does not immediately apply. However, we may proceed as follows. Assume that each line $\ell_r=\{z\; |\;\re z=r\mod\mathbb Z\}$ contains at most one critical value of $f$ (possibly after a perturbation). Pick $r$ such that the line $\ell_r$ does not contain any critical value of $f$ and ``cut" the cylinder  $\mathbb C/\mathbb Z$  along $\ell_r$ to obtain a holomorphic function $f_r$ with values in $(0,1)\times\mathbb R$. In other words, consider only those flow lines $\gamma$ of $f$ for which the image of $f\comp\gamma$ does not intersect the line $\ell_r$.  Then $D^b(\mathcal A(f_r))$ does not depend on $r$ as long as $r$ varies in a connected interval $I$ such that $I\times\mathbb R$ does not contain any critical value of $f$. In this way we obtain a collection of $k$ triangulated categories $\bigl ( D^b(\mathcal A(f_{r_j})) \bigr )_{j=1}^k$, which is well-defined up to a cyclic permutation. Here $k$ is the number of critical values of $f$.
\end{rem}

 \subsection{A priori $C^0$-estimates}\label{Subsect_AprioriC0Estim}

Since $M$ is not compact, we need to show that solutions of~\eqref{Eq_PseudoholomPlaneWithHamPert_mod}--\eqref{Eq_PseudoholomPlaneWithHamPert_BC_s} do not leave a fixed compact subset of $M$. This is proved in this subsection under an additional assumption. 

\medskip

The proof of  Theorem~\ref{Thm_AprioriC0Estimates}, which is  the main result of this subsection, crucially depends on the choice of the perturbation $\theta_\nu$ of the function~\eqref{Eq_FunctionTheta0}. So we  take a moment to describe  the missing details.
 
Just like in the beginning of the previous subsection fix a pair of critical points $(m_-,m_+)$ and put $z_\pm=f(m_\pm)$. Up to a translation and a rotation we can assume that $z_0=0,\ \theta_\pm\in (0,\pi), \im z_-=\im z_+=\zeta>0$. For $\nu\in (0,1)$ consider a smooth function $\theta_\nu\colon\R\to\R$, which satisfies
\[
\theta_\nu (t)=
\begin{cases}
  0\qquad &|t|\ge \nu^{-1}+1,\\
 \theta_+\qquad & t\in [-\nu^{-1}, -\nu]\\
 \theta_- -\pi\qquad & t\in [\nu, \nu^{-1}] 
\end{cases}
\]  
and is monotone on the intervals $(-\nu^{-1}-1, -\nu^{-1}),\ (-\nu, \nu),$ and $(\nu^{-1}, \nu^{-1}+1)$. We also assume that $\theta_\nu(t)\ge 0$ for $t\le 0$ and that $\theta_\nu(t)\le 0$ for $t\ge 0$. The graph of $\theta_\nu$ is shown on Fig.~\ref{Fig_GraphThetaNu}.

\begin{figure}[ht]
\centering 
   \includegraphics[width=0.7\textwidth]{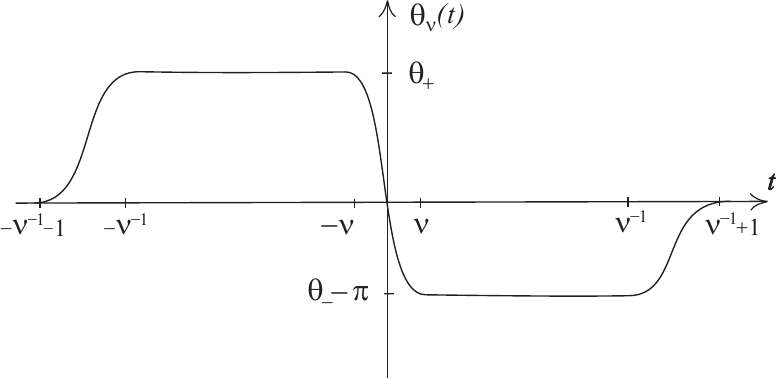}
\caption{Graph of $\theta_\nu$.}
\label{Fig_GraphThetaNu}
\end{figure}

\begin{prop}\label{Prop_ConvergentSubseqOfBrokenFlowLines}
 Suppose the closed domain $G$ bounded by the triangle $z_-z_0z_+$   contains no critical values of $f$ other than $z_\pm$. Let $\nu_j\in (0,1)$ and $\gamma_j\in\Gamma_{\nu_j}(m_-, m_+)$ be arbitrary sequences such that $\nu_j\to 0$.  Then there exists a subsequence  $j_k\to\infty$ such that $\gamma_{j_k}$ converges in $C^0(\mathbb R; M)$ and $\gamma_0=\lim\limits_{k\to\infty}\gamma_{j_k}$ is a solution of~\eqref{Eq_BrokenFlowLine}.
\end{prop}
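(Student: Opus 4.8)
The plan is to establish the convergence by the standard compactness scheme for flow lines: first derive uniform $C^0$-bounds on the $\gamma_j$, then uniform derivative bounds (from the flow equation itself), then extract a locally uniformly convergent subsequence by Arzelà--Ascoli, and finally upgrade this to $C^0(\mathbb R;M)$-convergence and identify the limit. Throughout I would use the normalisation $z_0=0$, $\theta_\pm\in(0,\pi)$, $\operatorname{Im}z_-=\operatorname{Im}z_+=\zeta>0$ fixed at the start of the subsection, and the explicit form of $\theta_{\nu}$ from Fig.~\ref{Fig_GraphThetaNu}.

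First I would control the image of $f\comp\gamma_j$. Writing $w_j=f\comp\gamma_j\colon\mathbb R\to\mathbb C$, the flow equation~\eqref{Eq_PertBrokenFlowLine} together with $Jv_0=v_1$ and~\eqref{Eq_FuncRho} gives $\dot w_j=-\rho\,(\cos\theta_\nu+i\sin\theta_\nu)^{\!*}$, i.e. $\dot w_j$ is a nonnegative multiple of $-e^{i\theta_\nu(t)}$ up to the holomorphicity bookkeeping; the key point is that the \emph{direction} of $\dot w_j$ is prescribed by $\theta_\nu(t)$, and the sign conditions $\theta_\nu\ge 0$ for $t\le0$, $\theta_\nu\le 0$ for $t\ge0$ force $w_j(t)$ to stay in the closed triangular region $G$ bounded by $z_-z_0z_+$ (more precisely in the lens/triangle swept out as $\theta_\nu$ interpolates). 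This is exactly where the hypothesis that $G$ contains no critical values other than $z_\pm$ is used: on $f^{-1}(G)$ minus small neighbourhoods of $m_\pm$, the function $\rho=|v_0|^2$ is bounded below, and near $m_\pm$ one has the local integrable model from~(P\ref{Pty_Properness}) to control the approach. Combining the a priori image bound with Property~(P\ref{Pty_LocTrivAwayFromK}) (properness of $f$ and local triviality outside a compact set), the images $\gamma_j(\mathbb R)$ all lie in a fixed compact subset $\mathcal K\subset M$ independent of $j$ — this is the analogue of the $C^0$-estimate that Theorem~\ref{Thm_AprioriC0Estimates} will provide, but here it follows more cheaply from the triangle being ``clean''.

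Next, since $v_0,v_1$ are smooth and $\mathcal K$ is compact, $\|\dot\gamma_j\|_{C^0}\le \sup_{\mathcal K}(|v_0|+|v_1|)=:C$ uniformly, so the family $\{\gamma_j\}$ is equicontinuous and precompact on every compact interval; a diagonal argument extracts $j_k\to\infty$ with $\gamma_{j_k}\to\gamma_0$ in $C^0_{loc}(\mathbb R;M)$. Since $\theta_{\nu_k}\to\theta_0$ pointwise on $\mathbb R\setminus\{0\}$ and boundedly, passing to the limit in the integrated form $\gamma_{j_k}(t)=\gamma_{j_k}(t')-\int_{t'}^t(\cos\theta_{\nu_k}v_0+\sin\theta_{\nu_k}v_1)(\gamma_{j_k})$ shows $\gamma_0$ solves the ODE in~\eqref{Eq_BrokenFlowLine} on $\mathbb R\setminus\{0\}$ and is continuous at $0$. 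It remains to verify the asymptotic conditions $\lim_{t\to\pm\infty}\gamma_0(t)=m_\mp$ and to promote $C^0_{loc}$-convergence to genuine $C^0(\mathbb R;M)$-convergence; for this I would use the behaviour of $w_0=f\comp\gamma_0$, which moves monotonically along the segments $\overline{z_0z_\mp}$ with speed $\rho$ bounded away from zero outside neighbourhoods of the critical values, so $\gamma_0(t)$ must enter any prescribed neighbourhood of $m_\mp$ as $t\to\pm\infty$, and then the local stable-manifold description near the non-degenerate critical points (integrable $J$, so $\operatorname{Re}(e^{i\theta}f)$ is Morse with the expected hyperbolic linearisation) pins down the limit and gives exponential convergence uniform in $k$, yielding the uniform tail estimate needed for $C^0(\mathbb R)$-convergence.

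The main obstacle I anticipate is the last step — the uniformity of the asymptotic behaviour near $m_\pm$ as $\nu_k\to0$. One must ensure that the ``bubbling at infinity'' of flow energy does not occur, i.e. that no part of $\gamma_{j_k}$ escapes towards a critical point and comes back, and that the time spent near $m_\pm$ is controlled uniformly; this is where the hypothesis on $G$ is essential, since it rules out intermediate critical values that could trap the flow. Concretely I would prove a uniform lower bound on $\rho$ along $\gamma_{j_k}$ away from fixed neighbourhoods of $m_\pm$ (using compactness of $f^{-1}(G)\setminus(\text{nbhds})$ and non-vanishing of $\grad f_0$ there), convert it into a uniform upper bound on the length of $f\comp\gamma_{j_k}$ spent outside those neighbourhoods, and combine with the hyperbolic estimate near $m_\pm$ — this package is exactly what makes the tails uniformly small and the limit a genuine broken flow line. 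The remaining computations (the local normal form near a non-degenerate critical point, the monotonicity of $\operatorname{Re}(e^{i\theta}(f-z_\pm))$ along the flow) are routine.
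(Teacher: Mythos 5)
Your high-level scheme coincides with the paper's: first control the image of $f\circ\gamma_j$ (this is what the paper's Lemma~\ref{Lem_DistToBrokenLine} does), then get uniform $C^0$- and Lipschitz bounds and extract a locally uniformly convergent subsequence by Arzel\`a--Ascoli, then identify the limit and upgrade to uniform convergence on all of $\mathbb R$ via a uniform tail estimate near $m_\pm$.

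Two points of comparison. First, a small algebraic slip: with $w=f\circ\gamma$ and $\dot\gamma=-\cos\theta_\nu\,v_0-\sin\theta_\nu\,v_1$, holomorphicity and $\langle v_0,v_1\rangle=0$ give $\dot w=-\rho\,e^{i\theta_\nu}$ (no conjugate). Second, and more importantly, your resolution of the ``anticipated obstacle'' is genuinely different from the paper's and, as stated, does not quite close. You appeal to a local stable-manifold description near $m_\pm$ with uniform exponential convergence. But on $\gamma_\nu$ the angle $\theta_\nu(t)$ is \emph{not} constant near $t=+\infty$ uniformly in $\nu$: it equals $\theta_--\pi$ on $[\nu,\nu^{-1}]$, transitions to $0$ on $[\nu^{-1},\nu^{-1}+1]$, and is $0$ only for $t\ge\nu^{-1}+1$. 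The stable manifold of $m_-$ for the flow of $\re(e^{-i(\theta_--\pi)}f)$ and for the flow of $\re f$ are different, and the transition interval escapes to $+\infty$ as $\nu\to 0$, so there is no single hyperbolic flow whose stable-manifold estimate applies uniformly. This is precisely the difficulty your last paragraph flags, but the stable-manifold package alone does not handle it. The paper instead exploits the gradient-flow energy identity $\frac{d}{dt}\re\bigl(e^{-i\theta_\nu}f\circ\gamma_\nu\bigr)=|\dot\gamma_\nu|^2$ on the intervals where $\theta_\nu$ is constant to get uniform $W^{1,2}(\mathbb R_+)$ tail estimates (hence uniform $C^0$ smallness by Sobolev embedding), and a Gronwall--Bellman argument on the transition interval $[\nu^{-1},\nu^{-1}+1]$ using the quadratic behaviour $\rho(x)\le C^2|x|^2$ near the nondegenerate critical point $m_-$ to show that $\gamma_\nu$ does not leave a small neighbourhood of $m_-$ while $\theta_\nu$ changes direction. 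This energy-plus-Gronwall mechanism is what actually makes the tails uniformly small; without something replacing it, your ``hyperbolic'' step is a gap.

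If you want to make your approach work, you should replace the stable-manifold step by: (i) use the sign structure of $\theta_\nu$ and the gradient-flow identity to bound $\int_{T_\e}^{\nu^{-1}}|\dot\gamma_\nu|^2$ and $\int_{\nu^{-1}+1}^{\infty}|\dot\gamma_\nu|^2$ uniformly by $\e$; (ii) use the quadratic local model of $f_0$ near $m_-$ and Gronwall on $[\nu^{-1},\nu^{-1}+1]$ to bound $\sup|\dot\gamma_\nu|$ there by $\e$; then (iii) sum to obtain a uniform $W^{1,2}$ bound on $\gamma_\nu|_{[T_\e,\infty)}$ and apply $W^{1,2}\hookrightarrow C^0$. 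This is exactly the content of Steps~1--3 in the paper's proof.
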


The proof of Proposition~\ref{Prop_ConvergentSubseqOfBrokenFlowLines} is given in Appendix~\ref{App_OnThePertAntigradFlowLines}.  The assumption that $G$ contains no critical values of $f$ other than $z_\pm$ is essential. If this assumption does not hold, we may have solutions of~\eqref{Eq_PertBrokenFlowLine}, such that the curve $f\circ\gamma_\nu$ is not even homotopic (relative endpoints) to $\ell=\overline{z_+z_0}\cup\overline{z_0z_-}$ in $\C\setminus\{z_1,\dots, z_m\}$ (see also Lemma~\ref{Lem_DistToBrokenLine}). In particular, the conclusion of Proposition~\ref{Prop_ConvergentSubseqOfBrokenFlowLines} is false in this case.   To exclude  such phenomena, we make the following additional assumption.

\begin{hypothesis}\label{Hyp_ConvPosition}
     \textit{Convex position of critical values.} The critical values of $f$ are in convex position, i.e., none of the critical values of $f$ is contained in the convex hull of the other critical values. The base point $z_0$ lies in the interior of the convex hull of the critical values.
\end{hypothesis}



\begin{thm}\label{Thm_AprioriC0Estimates}$\phantom{t}$
There exists a compact subset $\hat K\subset M\setminus\partial M$ such that the image of any solution of~\eqref{Eq_PseudoholomPlaneWithHamPert_mod} satisfying
\begin{equation}\label{Eq_BCwithoutIntegrals}
\lim\limits_{t\to\pm\infty} u(s,t)= m_\mp, \qquad \lim\limits_{s\to\pm\infty} u(s,t)=\gamma_\mp(t)
\end{equation}
is contained in $\hat K$.
\end{thm}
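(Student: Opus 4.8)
The plan is to derive an a priori $C^0$-bound for solutions of the Hamiltonian-perturbed pseudoholomorphic plane equation~\eqref{Eq_PseudoholomPlaneWithHamPert_mod}, \eqref{Eq_BCwithoutIntegrals} by combining (i) the structure of the fibration near infinity provided by~(P\ref{Pty_Properness})--(P\ref{Pty_LocTrivAwayFromK}), (ii) the convexity of the boundary from~(P\ref{Pty_ConvexBoundary}), and (iii) the convex position hypothesis~(H\ref{Hyp_ConvPosition}) together with Proposition~\ref{Prop_ConvergentSubseqOfBrokenFlowLines}. First I would analyze the composition $f\circ u\colon\mathbb R^2\to\mathbb C$. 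Applying $df$ to~\eqref{Eq_PseudoholomPlaneWithHamPert_mod} and using $J$-holomorphicity of $f$ ($df\circ J = i\,df$) together with $df(v_0)=|v_0|^2 = \rho$, $df(v_1) = i\rho$ (which follow from~\eqref{Eq_FuncRho} and $Jv_0=v_1$, at least away from $\mathrm{Crit}(f)$ where $\rho$ is the relevant quantity), one obtains that $w:=f\circ u$ satisfies an inhomogeneous Cauchy--Riemann-type equation $\partial_s w + i\,\partial_t w = -\rho(u)\,(\cos\theta_\nu + i\sin\theta_\nu)\cdot(\text{something})$; more precisely $w$ solves $\bar\partial w = -\tfrac12\rho(u)e^{i\theta_\nu(t)}$ up to the standard normalization. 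Since $\rho\ge 0$ and bounded (shown in the excerpt), and since the boundary conditions~\eqref{Eq_BCwithoutIntegrals} force $w(s,t)\to z_\mp$ as $t\to\pm\infty$ and $w(s,\cdot)\to f\circ\gamma_\mp$ as $s\to\pm\infty$, I can treat $w$ as a map of the plane into $\mathbb C$ whose boundary values at infinity all lie on the broken segment $\ell = \overline{z_+z_0}\cup\overline{z_0z_-}$, and whose ``curvature'' is controlled. A convexity/maximum-principle argument — using that $\theta_\nu$ was chosen precisely so that the image of $f\circ\gamma$ stays on $\ell$, and that the inhomogeneity points in a direction compatible with the convex hull of the critical values — should confine $w$ to a fixed bounded region $\mathcal D\subset\mathbb C$. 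This is where~(H\ref{Hyp_ConvPosition}) enters: convex position ensures the relevant half-planes/wedges cut out by the $\theta_\nu$ directions intersect in a bounded set, so the image of $f\circ u$ cannot escape to infinity in $\mathbb C$.

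Next, having confined $f\circ u$ to a compact $\mathcal D\subset\mathbb C$, I would upgrade this to a bound on $u$ itself in $M$. Over the region of $M$ lying above $\mathcal D$ but outside the compact set $K$ of~(P\ref{Pty_LocTrivAwayFromK}), the fibration is trivialized as $M_0\times B_\delta(z)$ with $\lambda$ and $J$ split, so~\eqref{Eq_PseudoholomPlaneWithHamPert_mod} decouples: the $\mathbb C$-component is controlled by the previous step, and the $M_0$-component satisfies a genuine pseudoholomorphic map equation into the fixed fiber $M_0$ with a Hamiltonian perturbation. Here I would use~(P\ref{Pty_ConvexBoundary}): weak $J$-convexity of $\partial M$ gives a plurisubharmonic-type function near the boundary whose composition with $u$ is subharmonic (the Hamiltonian term $\mathrm{Im}(e^{-i\theta_\nu}f)$ being a function pulled back from the base, it does not spoil subharmonicity of a fiberwise psh exhaustion, or can be absorbed), so the maximum principle prevents $u$ from touching or approaching $\partial M$ — this yields the conclusion $\hat K\subset M\setminus\partial M$. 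Finally, the asymptotic conditions~\eqref{Eq_BCwithoutIntegrals} pin the behavior of $u$ at the four ends onto the fixed broken flow lines $\gamma_\pm$ and critical points $m_\mp$, which themselves lie in a fixed compact set (by Proposition~\ref{Prop_ConvergentSubseqOfBrokenFlowLines} applied with a fixed $\nu$, or simply because each $\gamma_\mp$ is a fixed flow line with limits at critical points, hence has precompact image); combining the interior fiberwise bound with these asymptotic constraints produces a single compact $\hat K$ working for all solutions.

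\textbf{Main obstacle.} I expect the crux to be the first step: establishing that $f\circ u$ stays in a bounded region of $\mathbb C$. The equation for $w = f\circ u$ is not homogeneous — the inhomogeneity $-\tfrac12\rho(u)e^{i\theta_\nu(t)}$ depends on $u$, not just on $w$ — so one cannot directly invoke the open mapping theorem or a naive maximum principle. The resolution should be that $\rho(u)\ge 0$ everywhere and the phase $e^{i\theta_\nu(t)}$ ranges, as $t$ varies, over exactly the directions $\{e^{i\theta_+}, e^{i(\theta_--\pi)}\}$ and the short arcs interpolating them (by the explicit form of $\theta_\nu$ in Subsection~\ref{Subsect_AprioriC0Estim}); feeding this into a comparison argument — comparing $w$ against an affine function or against the distance to an appropriate supporting line of the convex hull — shows $\partial_s(\text{linear functional of }w)$ has a definite sign and a suitable subharmonic quantity built from $w$ attains its maximum at one of the ends, where it is bounded by $|z_\pm|$. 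Making this comparison precise, and in particular choosing the right linear functionals adapted to each subinterval where $\theta_\nu$ is constant (and controlling the contribution of the small transition intervals $(-\nu,\nu)$ etc., which is where $\nu$ small is used), is the technical heart; it is essentially a quantitative version of the homotopy statement alluded to after Proposition~\ref{Prop_ConvergentSubseqOfBrokenFlowLines}, namely that $f\circ u$ cannot wind around any critical value. One should also check that the argument degrades gracefully near $\mathrm{Crit}(f)$, where $v_0$ vanishes and the relation $df(v_0)=\rho$ must be interpreted with care — but near critical points $J$ is integrable by~(P\ref{Pty_Properness}), so local holomorphic coordinates make $f\circ u$ genuinely holomorphic up to the (small, controlled) Hamiltonian perturbation, and a local maximum principle applies there too.
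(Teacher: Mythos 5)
Your two-step strategy — first bound $f\circ u$ in $\mathbb{C}$, then use the fibration structure near infinity and the convexity of $\partial M$ to bound $u$ in $M$ — is exactly the structure of the paper's proof, and your Step~2 (trivialization from~(P\ref{Pty_LocTrivAwayFromK}), plurisubharmonic exhaustion near $\partial M_0$ from~(P\ref{Pty_ConvexBoundary}), maximum principle for the fiber component) is essentially what the paper does. However, you have not identified the mechanism that actually makes Step~1 work, and the one you propose points in the wrong direction.

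The missing idea is that on $V = M\setminus K$ the function $\rho = \{f_0,f_1\}$ is identically the constant $r^{-1}$, by (P\ref{Pty_LocTrivAwayFromK}). You derive the correct inhomogeneous Cauchy--Riemann system for $\varphi + i\psi = f\circ u$,
\begin{equation*}
\partial_s\varphi - \partial_t\psi = \sin\theta_\nu(t)\,\rho\circ u,\qquad \partial_s\psi + \partial_t\varphi = -\cos\theta_\nu(t)\,\rho\circ u,
\end{equation*}
but then treat the inhomogeneity as an obstacle because it depends on $u$. The point is that whenever $|f\circ u| > R$ (with $R$ large enough that $f(K)\subset B_R(0)$), the inhomogeneity is a function of $t$ alone. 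Concretely, one introduces $\Theta_1(t) = r^{-1}\bigl(\int_0^t\cos\theta_\nu - t\bigr)$ and $\Theta_2(t) = r^{-1}\int_0^t\sin\theta_\nu$; computing $\Delta\varphi$ on the region where $\rho\circ u = r^{-1}$ gives $\Delta\varphi = r^{-1}\theta_\nu'(t)\sin\theta_\nu(t) = -\Theta_1''(t)$, so $\varphi + \Theta_1(t)$ is \emph{harmonic} there. A hypothetical interior maximum of $\varphi + \Theta_1$ above $R + \sup\Theta_1$ would lie in this region and contradict the maximum principle; the boundary conditions~\eqref{Eq_BCwithoutIntegrals} guarantee the sup is attained. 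The same applies to $-\varphi$, $\pm\psi$ with $\Theta_2$. This gives $|f\circ u|\le\hat R$ with a uniform $\hat R$, and the uniformity hinges on $\Theta_1,\Theta_2$ being bounded — which is precisely why $\theta_\nu$ is designed to vanish for $|t|\ge\nu^{-1}+1$ (so both integrals become constant).

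Two further points. First, your ``comparison against affine functionals/supporting lines of the convex hull'' route is not how the argument goes; once $\rho$ is known to be constant near infinity the inhomogeneity is exactly integrable, and the affine comparison becomes unnecessary. Second, your expectation that~(H\ref{Hyp_ConvPosition}) is the engine of Step~1 is misplaced: the proof of Theorem~\ref{Thm_AprioriC0Estimates} as written does not invoke~(H\ref{Hyp_ConvPosition}) at all — only (P\ref{Pty_ConvexBoundary}), (P\ref{Pty_LocTrivAwayFromK}), and the specific design of $\theta_\nu$. Convex position is needed elsewhere in the framework (e.g.\ so that Proposition~\ref{Prop_ConvergentSubseqOfBrokenFlowLines} applies to every pair of critical points, keeping $f\circ\gamma_\nu$ near the broken segment $\ell$), not inside the maximum-principle calculation.
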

\begin{proof}
The proof consists of the following two steps.

\setcounter{thestep}{0}
\begin{step}\label{Step_BoundOnFU_Thm_AprioriC0Estimates}
  There exists a constant $\hat R>0$ such that for any solution $u\in C^2(\R^2)$ of~\eqref{Eq_PseudoholomPlaneWithHamPert_mod}  and~\eqref{Eq_BCwithoutIntegrals}  
 we have 
\[
\bigl |f\circ u(s,t)\bigr |\le \hat R\qquad\text{for all } (s,t)\in\R^2.
\]  
\end{step}

Denote $f\comp u=\varphi +i\psi$ and observe that Floer's equation for $u$ implies the equations
\begin{equation}\label{Eq_AuxPertCauchyRiemann}
\partial_s\varphi -\partial_t\psi =\sin\theta_\nu (t)\, \rho\circ u,\quad \partial_s\psi +\partial_t\varphi =-\cos\theta_\nu (t)\, \rho\circ u.
\end{equation}

 Denote 
\begin{equation*}
  \Theta_1(t)=\frac 1r\Bigl (\int_0^t\cos\theta_\nu(\tau)\, d\tau -t\Bigr ),\qquad
 \Theta_2(t)=\frac 1r\int_0^t\sin\theta_\nu(\tau)\, d\tau
\end{equation*}
and notice that $\Theta_1$ and $\Theta_2$ are bounded both from above and below (in fact, $\Theta_i(t)$ is locally constant for $|t|\ge \nu^{-1}+1$). This crucial property is a corollary of our particular choice of $\theta_\nu$. 

Put  $\overline\Theta_j=\sup_{\R}\Theta_j (t),\ \underline\Theta_j=\inf_{\R}\Theta_j (t),\  j=1,2$. Furthermore, choose $R>0$ so large that $f(K)\subset B_R(0)$, where $K$ is the compact subset in (P\ref{Pty_LocTrivAwayFromK}).  We claim that the following inequality
\begin{equation}\label{Eq_AuxIneqForSupPhi}
\sup_{\R^2}\bigl ( \varphi(s,t)+\Theta_1(t)  \bigr )\le R +\overline\Theta_1
\end{equation}
holds for all $(s,t)\in\R^2$. We argue by contradiction. Indeed,  assume $\varphi (s_0,t_0)+\Theta_1(t_0)=\sup \bigl ( \varphi(s,t)+\Theta_1(t)  \bigr )>R+\overline\Theta_1$ for some $(s_0,t_0)\in \R^2$ (the boundary conditions for $u$ imply that the supremum must be attained at some point in $\R^2$). Then $\varphi(s_0, t_0)>R$ so that $(\varphi,\psi)\in B_R^c(0)$ for all $(s,t)$ lying in some neighbourhood $U$ of $(s_0,t_0)$.  Since $\rho=r^{-1}$ everywhere on $f^{-1}(B_R^c(0))$, from~\eqref{Eq_AuxPertCauchyRiemann} we obtain 
\[
\Delta\varphi =r^{-1}\theta_\nu'(t)\sin\theta_\nu(t), \qquad (s,t)\in U.
\]  
Hence, the function $\varphi +\Theta_1$ is harmonic in $U$ and achieves its  maximum at $(s_0, t_0)\in U$.  This contradiction proves~\eqref{Eq_AuxIneqForSupPhi}.

Inequality~\eqref{Eq_AuxIneqForSupPhi} implies in turn the estimate
\[
 \sup_{\R^2}\varphi(s,t)\le R + (\overline\Theta_1-\underline\Theta_1).
\]
Arguing along similar lines one also obtains
\begin{equation*}
  \begin{gathered}
  \inf_{\R^2}\varphi(s,t) \ge -R - (\overline\Theta_1-\underline\Theta_1),   \\
  \sup_{\R^2} \psi(s,t)\le R + (\overline\Theta_2-\underline\Theta_2), \qquad
\inf_{\R^2} \psi(s,t)\ge -R - (\overline\Theta_2-\underline\Theta_2). 
  \end{gathered}
\end{equation*}
This finishes the proof of Step~\ref{Step_BoundOnFU_Thm_AprioriC0Estimates}.

\begin{step}
We prove the theorem.
\end{step}

Let $W\supset\partial M,\ W'\supset\partial M\times\C$, and $\psi\colon W'\to W$ be as in the paragraph following~(P\ref{Pty_LocTrivAwayFromK}). Observe that property~(P\ref{Pty_ConvexBoundary}) implies that the boundary of $M_0$ is $J_{M_{0}}$--convex, i.e., there exists a function $h\colon M_0\to (-\infty, 0]$, which is plurisubharmonic in a neighbourhood of the boundary and $\partial M_0=h^{-1}(0)$. Choose $\e>0$ so small that $h$ is subharmonic on $h^{-1}(-\e,0)$ and $U'=h^{-1}(-\e,0)\times B_R(0)\subset W'$. Denote $U=\psi(U')$. 

We claim that for any solution $u$ of~\eqref{Eq_PseudoholomPlaneWithHamPert_mod}  and~\eqref{Eq_BCwithoutIntegrals} we have $u(\R^2)\cap U=\varnothing$. Indeed, assuming the converse, there exists $z_0=(s_0, t_0)$ such that $h\comp u(z_0)=\sup \{ h\comp u(z)\mid u(z)\in U\}$. Then for sufficiently small $\delta>0$ we can think of $u$ as a map $B_\delta(z_0)\to M_0\times\C$. If $\pi_1$ denotes the projection to the first components, the map $\pi_1\comp u$ is pseudoholomorphic. Moreover, $h\comp\pi_1\comp u=h\comp u$ has a local maximum at $z_0$, which is a contradiction. 

Thus  the image of $u$ is contained in $\hat K=f^{-1}(B_R(0))\setminus U$. It remains to notice that $\hat K$ is compact. 
%
\end{proof}

\begin{rem}
  We would like to stress that other results in this paper (except those in Appendix~\ref{App_OnThePertAntigradFlowLines}) depend on hypothesis~(H\ref{Hyp_ConvPosition}) only through Theorem~\ref{Thm_AprioriC0Estimates}. It is quite possible that an a priori $C^0$--bound can still be proved for a different choice of the perturbation $\theta_\nu$, which does not require convex position of the critical values. However at present it is not quite clear how to obtain such an estimate without~(H\ref{Hyp_ConvPosition}). 
\end{rem}



\subsection{The action functional and the energy identity}
Denote 
\[
\begin{aligned}
W_{m_-,m_+}^{2,2}=\bigl\{ \gamma\in W_{loc}^{2,2}(\R; M)\mid 
&\text{ there exist } T>0\ \text{and } \xi_\pm\in W^{2,2}\bigl ((T,\infty); T_{m_\pm}M\bigr )\bigr.\\
&\bigl.\ \text{s.t. } \gamma (\pm t)=\exp_{m_\pm}\xi_\pm(t)\text{ for } t>T \bigr\}.
\end{aligned}
\]
Then the action functional
\begin{equation}\label{Eq_ActionFunctional}
  \mathscr F(\gamma) = \int\limits_{\R}\gamma^*\lambda 
+\int\limits_{\R}\im{\bigl (e^{-i\theta(t)}f\circ\gamma (t) \bigl )}\, dt
\end{equation}
is well--defined as a map $\mathscr F\colon W_{m_-,m_+}^{2,2}\to\R$. Indeed the first integral is convergent, since $\gamma^*\l\in W^{1,2}(\R)\hookrightarrow L_1(\R)$. As for the second integral, the convergence follows from the fact that $f$ is a quadratic function in an appropriate coordinate chart at $m_\pm$. Observe also, that  $\mathscr F$ is essentailly the standard symplectic action functional with a Hamiltonian perturbation. 

Consider the time-dependent vector field 
\[
v^t= \grad\re{\left ( e^{-i\theta_\nu(t)}f \right)} 
=\cos\theta_\nu(t)\, v_0 +\sin\theta_\nu(t)\, v_1.
\]
A standard computation shows that $d\mathscr F (\xi)= -\int_{\mathbb R}\om(\xi, \dot\gamma + v^t)\, dt$, where  $\xi$ is a vector field along $\gamma$. Here we used the fact, that the symplectic gradient of $f_0$ is $v_1=\grad  f_1$. 
Therefore with respect to the $L^2$--metric we have $ \grad \mathscr F = J(\dot\gamma + v^t)$. Hence,  the critical points of the functional $\mathscr F$ 
are broken  flow lines of $f$ connecting $m_+$ and $m_-$.  Similarly, the antigradient flow lines of $\mathscr F$ can be interpreted as  solutions of equations~\eqref{Eq_PseudoholomPlaneWithHamPert_mod}.



Define the energy of a solution $u$ of~\eqref{Eq_PseudoholomPlaneWithHamPert_mod} by
 \begin{equation*}
 E(u)=\frac12\int_{\mathbb R^2} \bigl ( |\partial_s u|^2 + |\partial_t u+v^t|^2 \bigr )ds\wedge dt=\int_{\mathbb R^2}  |\partial_s u|^2\, ds\wedge dt.
\end{equation*}

\begin{thm}[Energy identity]\label{Thm_EnergyId}
  Let $u\in C^1(\R^2; M)$ be a solution of equations~\eqref{Eq_PseudoholomPlaneWithHamPert_mod}--\eqref{Eq_PseudoholomPlaneWithHamPert_BC_s}. Then  
\[
E(u)=\mathscr F(\gamma_+)-\mathscr F(\gamma_-).
\]
In particular, $E(u)<\infty$.
\end{thm}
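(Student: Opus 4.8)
The plan is to compute $E(u)$ by writing the integrand as an exact form plus a term that integrates against the vector field, and then invoking Stokes' theorem on large rectangles, using the boundary conditions~\eqref{Eq_PseudoholomPlaneWithHamPert_BC_t}--\eqref{Eq_PseudoholomPlaneWithHamPert_BC_s} to identify the limits. Concretely, for a solution $u$ of~\eqref{Eq_PseudoholomPlaneWithHamPert_mod} one has $\partial_s u = -J(\partial_t u + v^t)$, so
\[
|\partial_s u|^2 = \om(\partial_s u, \partial_t u + v^t) = u^*\om(\partial_s, \partial_t) + \om(\partial_s u, v^t).
\]
Since $\om = d\l$, the first term is $d(u^*\l)(\partial_s,\partial_t)$, i.e. $\partial_s(u^*\l(\partial_t)) - \partial_t(u^*\l(\partial_s))$. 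For the second term, using $v^t = \grad\,\re(e^{-i\theta_\nu(t)}f)$ together with $g(\cdot,\cdot)=\om(\cdot,J\cdot)$ and $J v_0 = v_1$, one checks that $\om(\partial_s u, v^t) = -\partial_s\bigl(\im(e^{-i\theta_\nu(t)}f\circ u)\bigr) + (\text{a term with }\theta_\nu')$; more precisely the symplectic gradient of $f_0$ is $v_1$, so that $\om(\xi,v^t) = d(\re(e^{-i\theta_\nu}f))(J\xi)$ and a short manipulation turns $|\partial_s u|^2$ into $\partial_s$ of an explicit quantity plus $\partial_t$ of another, exactly mirroring the computation $\grad\mathscr F = J(\dot\gamma+v^t)$ already recorded in the text.

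The cleanest route is therefore to identify $E(u)$ with $\int_{\R^2}\tfrac{d}{ds}\bigl(\mathscr F\text{-density evaluated along }u(s,\cdot)\bigr)\,ds\wedge dt$, i.e. to show $\tfrac{d}{ds}\mathscr F(u(s,\cdot)) = \|\partial_s u(s,\cdot)\|_{L^2}^2 \ge 0$ as a consequence of $\grad\mathscr F = J(\dot\gamma+v^t)$, and then write $E(u) = \int_{\R}\tfrac{d}{ds}\mathscr F(u(s,\cdot))\,ds = \lim_{s\to+\infty}\mathscr F(u(s,\cdot)) - \lim_{s\to-\infty}\mathscr F(u(s,\cdot))$. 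By~\eqref{Eq_PseudoholomPlaneWithHamPert_BC_s}, $u(s,\cdot)\to\gamma_\mp(\cdot)$ in $C^0$, and one needs this convergence to be strong enough (in $W^{2,2}_{m_-,m_+}$, or at least enough to pass to the limit in the two integrals defining $\mathscr F$) to conclude $\lim_{s\to\pm\infty}\mathscr F(u(s,\cdot)) = \mathscr F(\gamma_\mp)$. The integrability hypothesis accompanying~\eqref{Eq_PseudoholomPlaneWithHamPert_BC_s} — that $\int_a^b|\partial_s u|\,dt\to 0$ and, via the trace estimate, that $\partial_s u$ is controlled in $W^{k,p}$ — is what makes $s\mapsto\mathscr F(u(s,\cdot))$ absolutely continuous with derivative $\|\partial_s u(s,\cdot)\|_{L^2}^2$, and also what guarantees the asymptotic fiber limits in~\eqref{Eq_PseudoholomPlaneWithHamPert_BC_t} are achieved uniformly so that $\mathscr F$ is finite along each slice.

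I would structure the proof in three steps: (i) verify the pointwise identity $|\partial_s u|^2 = \tfrac{d}{ds}\bigl[u(s,\cdot)^*\l(\partial_t) + \im(e^{-i\theta_\nu(t)}f\circ u)\bigr] - \tfrac{d}{dt}\bigl[\cdots\bigr]$, i.e. that the energy density is, up to a $t$-divergence, the $s$-derivative of the $\mathscr F$-density; (ii) integrate in $t$ over $\R$, showing the $t$-divergence term contributes nothing because of the decay at $t\to\pm\infty$ coming from $u(s,t)\to m_\mp$ and the quadratic form of $f$ near $m_\pm$ (so $u^*\l(\partial_t)$ and $\im(e^{-i\theta_\nu}f\circ u)$ decay); (iii) integrate in $s$, apply the fundamental theorem of calculus to $s\mapsto\mathscr F(u(s,\cdot))$, and evaluate the endpoints via the $s\to\pm\infty$ boundary conditions, using the $C^0$-bound of Theorem~\ref{Thm_AprioriC0Estimates} to keep everything on a fixed compact set where all quantities are uniformly controlled.

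The main obstacle is the rigorous justification of the limit interchange in step (iii): one must show that the convergence $u(s,\cdot)\to\gamma_\mp$ furnished by~\eqref{Eq_PseudoholomPlaneWithHamPert_BC_s} is in a topology strong enough to pass to the limit inside both integrals of~\eqref{Eq_ActionFunctional}, and simultaneously that the ``flux at infinity'' terms (the $t\to\pm\infty$ contributions and the double limit $s\to\pm\infty$, $t\to\pm\infty$) genuinely vanish rather than merely being finite. This is precisely where the integral conditions in~\eqref{Eq_PseudoholomPlaneWithHamPert_BC_t}--\eqref{Eq_PseudoholomPlaneWithHamPert_BC_s} and the trace/Sobolev estimate~\eqref{Eq_AuxEstimForTrace} are indispensable; a naive application of Stokes on a finite rectangle $[-S,S]\times[-T,T]$ followed by $T\to\infty$ then $S\to\infty$ works, but controlling the four edge integrals uniformly requires exactly these hypotheses, and checking that the corners do not contribute is the delicate bookkeeping. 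Once the flux terms are shown to vanish, the identity $E(u) = \mathscr F(\gamma_+) - \mathscr F(\gamma_-)$ and the finiteness $E(u)<\infty$ follow immediately since $\mathscr F$ is finite on $W^{2,2}_{m_-,m_+}$.
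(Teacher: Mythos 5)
Your outline is essentially the paper's own argument: the paper applies Stokes' theorem to $|\partial_s u|^2=\om(\partial_s u,\partial_t u+v^t)$ on a rectangle $[-\sigma,\sigma]\times[-\tau,\tau]$, using exactly your identity $\om(v^t,\partial_s u)=\partial_s\im\bigl(e^{-i\theta_\nu(t)}f\circ u\bigr)$, and then takes the limits $\sigma\to+\infty$ followed by $\tau\to+\infty$ with the integral conditions in~\eqref{Eq_PseudoholomPlaneWithHamPert_BC_t}--\eqref{Eq_PseudoholomPlaneWithHamPert_BC_s} controlling the four edge contributions. One small misattribution in your step (ii): the $t$-flux edge terms are $\int_{-\sigma}^{\sigma}\l(\partial_s u(s,\pm\tau))\,ds$, and these vanish as $\tau\to\infty$ because of the integral condition $\lim_{t\to\pm\infty}\int_{-\infty}^{+\infty}|\partial_s u(s,t)|\,ds=0$ in~\eqref{Eq_PseudoholomPlaneWithHamPert_BC_t}, not because $u^*\l(\partial_t)$ and $\im(e^{-i\theta_\nu}f\circ u)$ decay (those are the $\partial_s$-bracket terms, relevant to finiteness of $\mathscr F$ along slices, not to the $t$-divergence). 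You recover the correct mechanism in your final paragraph, but note that the paper sends $\sigma\to\infty$ before $\tau\to\infty$ — this order matters because the $\im(e^{-i\theta}f\circ\gamma_{\pm\sigma})$ edge terms must first be converted to integrals over the limiting flow lines $\gamma_\mp$ before the $t$-integral is extended to all of $\R$.
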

\begin{proof}
  It is convenient to denote $\beta_t(s)=u(s,t)=\gamma_s(t)$. Pick arbitrary positive numbers $\sigma$ and $\tau$. Using Stokes' theorem and the identity
\[
\om(v^t, \partial_su)= \frac \partial{\partial s}\im{\bigl (e^{-i\theta(t)}f\circ u (s,t)\bigr )}
\] 
a standard computation yields
\begin{equation}\label{Eq_AuxEnergyId}
 \begin{aligned}
  \int\limits_{-\tau}^\tau\int\limits_{-\sigma}^\sigma |\partial_s u|^2\, ds\wedge dt &=
\int\limits_{-\tau}^\tau\int\limits_{-\sigma}^\sigma \om\bigl (\partial_s u, \partial_tu +v^t\bigr )\, ds\wedge dt\\
&=\int\limits_{-\tau}^{\tau} \gamma_{-\sigma}^*\l
- \int\limits_{-\tau}^{\tau}\gamma_{\sigma}^*\l
+\int\limits_{-\sigma}^{\sigma} \beta_{\tau}^*\l
-\int\limits_{-\sigma}^{\sigma} \beta_{-\tau}^*\l\\%
&-\int\limits_{-\tau}^{\tau}\im{e^{-i\theta(t)}f\circ\gamma_\sigma (t)}\, dt
+ \int\limits_{-\tau}^{\tau}\im{e^{-i\theta(t)}f\circ\gamma_{-\sigma} (t)}\, dt.
\end{aligned}
\end{equation}
With the help of equation~\eqref{Eq_PseudoholomPlaneWithHamPert_mod} we obtain
\[
\l (\partial_tu)-\l (\dot\gamma_\pm)=\l (J\partial_s u) +\l \bigl (v^t (\gamma_\pm)\bigr )-\l \bigl (v^t(u)\bigr ).
\] 
This in turn implies by~\eqref{Eq_PseudoholomPlaneWithHamPert_BC_s} that 
\[
  \int_{-\tau}^{\tau}\gamma_{\pm\sigma}^*\l \longrightarrow \int_{-\tau}^{\tau}\gamma_{\mp}^*\l\quad\text{as}\ \sigma\to +\infty.
\] 
Similarly, by~\eqref{Eq_PseudoholomPlaneWithHamPert_BC_t} we also have
\[
 \int_{-\infty}^{+\infty}\beta_{\pm\tau}^*\l \longrightarrow 0,\quad\text{as}\ \tau\to +\infty.
\]
Hence, passing in~\eqref{Eq_AuxEnergyId} first to the limit as $\sigma\to +\infty$ and then to the limit as $\tau\to +\infty$ we obtain the statement of the theorem.  
\end{proof}


\subsection{A priory $C^\infty$--estimates}

It is convenient to introduce the $L^p$--version of the energy of a map $u$:
\[
E_p(u)=\frac12\int_{\mathbb R^2} \bigl ( |\partial_s u|^p + |\partial_t u+v^t|^p \bigr )ds\wedge dt.
\]
In particular, $E(u)=E_2(u)$.

\begin{prop}\label{Prop_UnifConvergenceOfDsU}
 Let $u$ be a solution of~\eqref{Eq_PseudoholomPlaneWithHamPert_mod} with $E_p(u)<\infty$ for some $p\in [2,\infty)$. Then the following holds:
\begin{alignat}{3}
 &\lim\limits_{s\to\pm\infty} \partial_s u(s,t)=0\qquad &&\text{and}\qquad &&\lim\limits_{t\to\pm\infty} \partial_s u(s,t)=0;\label{Eq_LimitsDsU}\\
 &\,\ \sup\limits_{\mathbb R^2}|\partial_su|<\infty          &&\text{and} && \,\ \sup\limits_{\mathbb R^2}|\partial_tu|<\infty.\label{Eq_SupremumDU}
\end{alignat}
Here both limits in~\eqref{Eq_LimitsDsU}   are understood in the $C^0(\mathbb R)$--topology.
\end{prop}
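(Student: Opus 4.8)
The plan is to prove~\eqref{Eq_LimitsDsU} and~\eqref{Eq_SupremumDU} by a bootstrapping argument combined with the standard ``bubbling-off implies convergence'' mechanism from Floer theory, adapted to the Hamiltonian-perturbed setting. First I would observe that equation~\eqref{Eq_PseudoholomPlaneWithHamPert_mod} is, away from the critical points of $f$, a pseudoholomorphic map equation with a smooth (and, by the choice of $\theta_\nu$, eventually $t$-independent) Hamiltonian term; in particular it is a uniformly elliptic first-order system with smooth coefficients on the compact region $\hat K$ provided by Theorem~\ref{Thm_AprioriC0Estimates}. Since $u(\R^2)\subset\hat K$, all geometric quantities ($J$, $v^t$, curvature of the relevant connections) are bounded along $u$. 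The finiteness of $E_p(u)$ then gives $\partial_s u\in L^p(\R^2;u^*TM)$ and, via the equation, $\partial_t u + v^t\in L^p$ as well.

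The heart of the argument is an a priori pointwise bound on $|\partial_s u|$, i.e.~the first half of~\eqref{Eq_SupremumDU}. I would derive this by the usual mean-value/Heinz-type inequality: differentiating~\eqref{Eq_PseudoholomPlaneWithHamPert_mod}, the vector field $\zeta = \partial_s u$ satisfies a linear Cauchy--Riemann-type equation of the form $\bar\partial_J\zeta + (\text{zeroth order, bounded coefficients})\cdot\zeta = 0$, whence $\Delta|\zeta|^2 \ge -C(|\zeta|^2 + |\zeta|)$ — more precisely $\Delta e \ge -C(e + e^{3/2})$ or a similar inequality for the energy density $e=|\partial_s u|^2$. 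Combined with the fact that the local energy $\int_{B_1(z_0)} e\,ds\wedge dt$ can be made uniformly small (this is the standard no-bubbling dichotomy: if the local energy on unit balls were not uniformly small, one rescales and extracts a nonconstant finite-energy $J$-holomorphic plane or half-plane — a bubble — contradicting either exactness of $\om$, since $\om=d\l$, or the boundary asymptotics), an $\e$-regularity estimate yields the pointwise bound $|\partial_s u(z_0)|^2 \le C\int_{B_1(z_0)} e$. The bound on $|\partial_t u|$ follows from~\eqref{Eq_PseudoholomPlaneWithHamPert_mod} since $|v^t|$ is bounded on $\hat K$.

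Once the uniform gradient bound is in hand, I would get~\eqref{Eq_LimitsDsU} essentially for free: the $\e$-regularity estimate upgrades to $|\partial_s u(z_0)|^2 \le C\int_{B_1(z_0)}|\partial_s u|^2$, and since $\partial_s u\in L^p(\R^2)$ — indeed $L^2$ by Theorem~\ref{Thm_EnergyId} — the quantity $\int_{B_1(z_0)}|\partial_s u|^2$ tends to $0$ as $z_0\to\infty$ in $\R^2$ in any direction. This gives $|\partial_s u(s,t)|\to 0$ as $|s|+|t|\to\infty$, which is stronger than the stated $C^0(\R)$-convergence in each variable separately; elliptic bootstrapping on unit balls (interior estimates for the elliptic system, iterated) then promotes this to convergence in $C^0$ together with all derivatives, so in particular the $C^0(\R)$ limits hold.

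The main obstacle is the exclusion of bubbling needed for $\e$-regularity. One must rule out $J$-holomorphic spheres (automatic here since $[\om]$ is exact, as $\om=d\l$, so there are no nonconstant $J$-holomorphic spheres of positive energy) and, more delicately, disc or plane bubbles forming at infinity or at points where the local energy concentrates; the asymptotic conditions~\eqref{Eq_PseudoholomPlaneWithHamPert_BC_t}--\eqref{Eq_PseudoholomPlaneWithHamPert_BC_s} are not directly available here (the proposition only assumes $E_p(u)<\infty$), so one argues purely from finiteness of energy plus exactness: a bubble would be a nonconstant finite-energy $J$-holomorphic map from $\C$ (or a half-plane) into the exact symplectic manifold $(M,d\l)$, which has zero energy by Stokes, a contradiction. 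Care is needed near $\partial M$, but Theorem~\ref{Thm_AprioriC0Estimates} confines $u$ to $\hat K\subset M\setminus\partial M$, so this issue does not arise. Assembling these pieces gives the proposition.
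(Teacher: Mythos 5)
Your proposal circles around the same core mechanism as the paper's proof---a Heinz-type mean-value inequality for $|\partial_s u|^2$ combined with finiteness of $E_p$---but it reaches it by a heavier and in one place unjustified route, and it reverses the logical order of the paper's argument.

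The paper's proof is short: it quotes Salamon's local estimate
\[
|\partial_su(s,t)|^2\le \frac 8{\pi r^2}\int_{B_r(s,t)}|\partial_s u|^2 +cr^2,
\]
valid whenever $\int_{B_r}|\partial_s u|^2<h$, with $h,c$ depending only on $(M,J,\omega,f)$. Finiteness of $E_p(u)$ forces the $L^p$-norm of $\partial_s u$ over unit balls to tend to zero as the center escapes to infinity, so the smallness threshold for the estimate holds automatically outside a compact set; optimizing $r$ against the local $L^p$-norm then gives \eqref{Eq_LimitsDsU} directly. The first bound in \eqref{Eq_SupremumDU} is then a triviality (decay at infinity plus continuity of $\partial_s u$ on the remaining compact box), and the second comes from the equation plus boundedness of $\rho$. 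No bubbling analysis, no prior uniform gradient bound, and no appeal to the $C^0$-estimate are required.

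Two substantive problems with your version. First, you invoke Theorem~\ref{Thm_AprioriC0Estimates} to confine $u$ to $\hat K$, but that theorem assumes the asymptotic conditions~\eqref{Eq_BCwithoutIntegrals}, which are \emph{not} among the hypotheses of Proposition~\ref{Prop_UnifConvergenceOfDsU}. The proposition is used later (e.g.~in Proposition~\ref{Prop_FinEnergyImpliesLimits} and the Gromov compactness argument) exactly in situations where the boundary behaviour is what one is trying to establish, so confinement to $\hat K$ is not available at this stage; the uniformity of the constants has to be secured differently (via (P\ref{Pty_Properness})--(P\ref{Pty_LocTrivAwayFromK})). Second, the $\e$-regularity inequality you write down, $|\partial_s u(z_0)|^2\le C\int_{B_1(z_0)}|\partial_s u|^2$, is missing the additive $+cr^2$ correction present in Salamon's estimate; without it the argument does not close, and with it the bubbling dichotomy is superfluous, since you only ever apply the estimate on balls where the local energy is already below the threshold by $L^p$-integrability. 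In short: replace the bubbling step and the appeal to Theorem~\ref{Thm_AprioriC0Estimates} with the corrected mean-value inequality plus $L^p$-smallness on far-away balls, derive the limits first, and extract \eqref{Eq_SupremumDU} as an afterthought.
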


\begin{proof}
We have~\cite[p.12]{Salamon:90MorseTheory} the following local estimate
\begin{equation*}
|\partial_su(s,t)|^2\le \frac 8{\pi r^2}\int_{B_r(s,t)}|\partial_s u|^2 +cr^2
\end{equation*}
provided $ \int_{B_r(s,t)}|\partial_s u|^2<h$. Here the constants $h, c>0$ depend on $M, J,\om$ and $f$ but not on $u$. Applying the estimate $\|\partial_su\|_{L^2(B_r(s,t))}\le \bigl (vol(B_r(s,t))\bigr)^{\frac 12 -\frac 1p}\|\partial_su\|_{L^p(B_r(s,t))}$ we obtain
\begin{equation}\label{Eq_IneqC0ThroughLp}
|\partial_su(s,t)|^2\le \frac 8{(\pi r^2)^{\frac 2p}} \|\partial_su\|^2_{L^p(B_r(s,t))} +cr^2
\end{equation}
provided $\|\partial_su\|_{L^p(B_r(s,t))}\le h\pi^{\frac 1p-\frac 12}$ and $r\le 1$. 

Pick an arbitrary $\e\in (0,1)$ and put $r=\sqrt\e$. Since by assumption $\|\partial_su\|^2_{L^p(\R^2)}<\infty$ there exists $R_\e>0$ such that $ \|\partial_su\|^2_{L^p(B_1(s,t))}< \e^{1+\frac 2p}$ provided $\max\{|s|, |t| \}>R_\e$. From~\eqref{Eq_IneqC0ThroughLp} we obtain $|\partial_su(s,t)|^2\le (8\pi^{-\frac 2p}+ c)\e$, which proves~\eqref{Eq_LimitsDsU}.


Further,  the first inequality in~\eqref{Eq_SupremumDU} follows immediately from~\eqref{Eq_LimitsDsU}. The second  inequality in~\eqref{Eq_SupremumDU}  is obtained from the first one using equation~\eqref{Eq_PseudoholomPlaneWithHamPert_mod} and the fact that $|v^t|^2=\rho$ is  bounded.
\end{proof}



\begin{cor}
   Let $u$ be a solution of~\eqref{Eq_PseudoholomPlaneWithHamPert_mod} with $E_p(u)<\infty$ for some $p\in [2,\infty)$. Then the convergence in~\eqref{Eq_BCwithoutIntegrals} in the $C^0$--topology implies the convergence in the $C^1$--topology. 
\end{cor}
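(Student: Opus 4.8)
The plan is to combine the uniform decay of $\partial_s u$ already supplied by Proposition~\ref{Prop_UnifConvergenceOfDsU} with Floer's equation~\eqref{Eq_PseudoholomPlaneWithHamPert_mod} written in the resolved form $\partial_t u = J\,\partial_s u - v^t(u)$, which expresses $\partial_t u$ through $\partial_s u$ and the zeroth order data of $u$. Interpreting the statement as convergence $u(s,\cdot)\to\gamma_\mp$ in $C^1(\mathbb R;M)$ as $s\to\pm\infty$ and $u(\cdot,t)\to m_\mp$ in $C^1(\mathbb R;M)$ as $t\to\pm\infty$, note first that, since $E_p(u)<\infty$, Proposition~\ref{Prop_UnifConvergenceOfDsU} already gives $\partial_s u(s,\cdot)\to0$ in $C^0(\mathbb R)$ as $s\to\pm\infty$ and $\partial_s u(\cdot,t)\to0$ in $C^0(\mathbb R)$ as $t\to\pm\infty$, i.e.\ the "$\partial_s$-part" of the asserted $C^1$-convergence in both directions; only the convergence of $\partial_t u$ remains. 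I would also observe that the assumed $C^0$-convergence in~\eqref{Eq_BCwithoutIntegrals}, together with the fact that $\gamma_+$ and $\gamma_-$ tend to $m_+$ as $t\to-\infty$ and to $m_-$ as $t\to+\infty$, confines $u$ to a fixed compact subset of $M$: outside a large box $u$ takes values near the compact set $\{m_-,m_+\}\cup\gamma_+(\mathbb R)\cup\gamma_-(\mathbb R)$, and on the box its image is compact by continuity. Consequently $v_0$ and $v_1$ are Lipschitz with some constant $L$ on a compact set containing the image of $u$ and of all the $\gamma_\pm$, and, since $|\cos\theta_\nu|,|\sin\theta_\nu|\le1$, the field $v^t$ is $2L$-Lipschitz there uniformly in $t$.

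For the convergence $u(s,\cdot)\to\gamma_\mp$ in $C^1(\mathbb R)$ as $s\to\pm\infty$, recall that $\gamma_\mp$ solves $\dot\gamma_\mp=-v^t(\gamma_\mp)$ by~\eqref{Eq_PertBrokenFlowLine}. Then, using orthogonality of $J$,
\[
\bigl|\partial_t u(s,t)-\dot\gamma_\mp(t)\bigr|\le\bigl|\partial_s u(s,t)\bigr|+\bigl|v^t(u(s,t))-v^t(\gamma_\mp(t))\bigr|\le\bigl|\partial_s u(s,t)\bigr|+2L\,\dist\bigl(u(s,t),\gamma_\mp(t)\bigr),
\]
and both terms on the right tend to $0$ uniformly in $t$ as $s\to\pm\infty$ — the first by Proposition~\ref{Prop_UnifConvergenceOfDsU}, the second by the $C^0$-hypothesis. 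Hence $\partial_t u(s,\cdot)\to\dot\gamma_\mp$ in $C^0(\mathbb R)$, which together with $u(s,\cdot)\to\gamma_\mp$ and $\partial_s u(s,\cdot)\to0$ is exactly the $C^1$-convergence.

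For the convergence $u(\cdot,t)\to m_\mp$ in $C^1(\mathbb R)$ as $t\to\pm\infty$, note that $m_\mp$ is a critical point of $f$, hence of $f_0$ and of $f_1$, so $v_0(m_\mp)=v_1(m_\mp)=0$ and therefore $v^t(m_\mp)=0$. From $|v^t(u(s,t))|\le|v_0(u(s,t))|+|v_1(u(s,t))|$ and continuity of $v_0,v_1$ at $m_\mp$, the hypothesis $u(\cdot,t)\to m_\mp$ forces $v^t(u(\cdot,t))\to0$ uniformly in $s$; together with $\partial_s u(\cdot,t)\to0$ and the resolved equation this yields $\partial_t u(\cdot,t)\to0$ uniformly in $s$, and since the constant curve $m_\mp$ has vanishing derivative this completes the $C^1$-convergence.

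The only step requiring genuine care, beyond direct substitution into the resolved equation, is the uniformity (in the complementary variable) of the convergence $v^t(u)\to v^t(\gamma_\mp)$, respectively $v^t(u)\to0$: this is precisely where the compactness observation and the crude bounds $|\cos\theta_\nu|,|\sin\theta_\nu|\le1$ enter, the latter to absorb the $t$-dependence of $v^t$ through $\theta_\nu$.
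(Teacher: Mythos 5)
Your argument is correct and is precisely the argument the paper leaves implicit: the paper states the corollary without a proof, immediately after Proposition~\ref{Prop_UnifConvergenceOfDsU}, because the intended reasoning is exactly yours — the proposition supplies the uniform decay of $\partial_s u$ in both asymptotic regimes, and solving~\eqref{Eq_PseudoholomPlaneWithHamPert_mod} for $\partial_t u = J\partial_s u - v^t(u)$ then converts the assumed $C^0$-convergence into $C^1$-convergence via the Lipschitz continuity of $v_0$ and $v_1$ on a compact set containing the image of $u$. The only thing worth noting is that the compactness observation you make is what justifies a uniform Lipschitz constant (and it is the place where the $C^0$-part of~\eqref{Eq_BCwithoutIntegrals} is used in an essential way, not merely as a hypothesis to be upgraded); apart from the usual unstated convention of comparing tangent vectors at nearby points via local trivializations or parallel transport, nothing is missing.
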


\begin{lem}\label{Lem_LocalEstimForCkNorm}
  Let $\Om$ be a bounded domain in $\mathbb R^2$. For any integer $k\ge 2$ and any $c_1>0$ there exists $c_k=c_k(c_1,\Om)$ with the following significance. For any solution $u$ of~\eqref{Eq_PseudoholomPlaneWithHamPert_mod} the following implication holds:
  \begin{equation*}
    \sup\limits_{\mathbb R^2} |\partial_s u|\le c_1\qquad\Longrightarrow\qquad   \| u\|_{C^k(\Om)}\le c_k.
  \end{equation*}
\end{lem}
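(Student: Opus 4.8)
The plan is to prove Lemma~\ref{Lem_LocalEstimForCkNorm} by a bootstrapping argument on the elliptic equation~\eqref{Eq_PseudoholomPlaneWithHamPert_mod}, exploiting the a priori $C^0$-bound (Theorem~\ref{Thm_AprioriC0Estimates}) together with the hypothesis that $\partial_s u$ is uniformly bounded. First I would observe that since $|\partial_s u|\le c_1$ and $|\partial_t u + v^t| = |J\partial_s u| = |\partial_s u|\le c_1$, and since $|v^t|^2 = \rho$ is bounded (by~\eqref{Eq_FuncRho} and the boundedness of $\rho$), we get a uniform $C^1$-bound $\|u\|_{C^1(\mathbb R^2)}\le C(c_1)$; in particular the image of $u$ lies in the fixed compact set $\hat K$ from Theorem~\ref{Thm_AprioriC0Estimates}. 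This confines everything to a coordinate-accessible region, so one may work in local trivializations.

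Next I would set up the bootstrap. Choose a slightly larger domain $\Om'\Subset\mathbb R^2$ with $\overline\Om\subset\Om'$ and, covering $u(\overline{\Om'})\subset\hat K$ by finitely many coordinate charts, rewrite~\eqref{Eq_PseudoholomPlaneWithHamPert_mod} locally as $\partial_s u + J_0(u)\partial_t u = -J_0(u)v^t + (\text{smooth zeroth-order terms in }u)$, i.e. a first-order system of the form $\bar\partial u = F(t,u)$ where $F$ is smooth in all arguments (the $t$-dependence enters through $\theta_\nu$, which is smooth, and the connection/metric coefficients). With the uniform $C^0$-bound in hand, the right-hand side is bounded in $C^0$; elliptic $L^p$-estimates for the $\bar\partial$-type operator then give $u\in W^{1,p}_{loc}$ for all $p<\infty$, hence $u\in C^{0,\alpha}_{loc}$ for all $\alpha<1$. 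Now iterate: if $u\in C^{k-1,\alpha}(\Om_{k-1})$ on a nested shrinking sequence of domains, then $F(t,u)\in C^{k-1,\alpha}$, so interior Schauder estimates for the (essentially Cauchy--Riemann, after a change of variables absorbing the $J_0(u)$-dependence) operator upgrade $u$ to $C^{k,\alpha}(\Om_k)$ with a bound depending only on $\|u\|_{C^{k-1,\alpha}}$, the $C^{k}$-norms of the fixed geometric data on $\hat K$, the domain, and $c_1$. Since the number of iterations is fixed once $k$ is fixed, and each constant depends only on $c_1$ and $\Om$ (through the fixed intermediate domains), this yields $\|u\|_{C^k(\Om)}\le c_k(c_1,\Om)$, completing the proof.

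The one technical point requiring care — and the main obstacle — is the presence of the almost complex structure $J$ multiplying $\partial_t u$: the operator $u\mapsto \partial_s u + J(u)\partial_t u$ is not literally $\bar\partial$ but only a perturbation of it with $u$-dependent coefficients, so one cannot directly quote scalar Schauder/Calderón--Zygmund estimates. The standard remedy, which I would invoke, is either (i) to freeze coefficients: near a point $z_0\in\Om$ write $J(u) = J(u(z_0)) + (J(u)-J(u(z_0)))$, choose coordinates on $M$ near $u(z_0)$ in which $J(u(z_0))$ is the standard $J_0$, so the leading operator is genuinely $\bar\partial$ and the difference term has small $C^0$-norm on a sufficiently small ball (by continuity of $J\circ u$, controlled by the $C^1$-bound), absorb it on the left, and patch via a partition of unity; or (ii) to use the known interior regularity theory for pseudoholomorphic maps (e.g. as in McDuff--Salamon, or the references already used in the paper such as~\cite{Salamon:90MorseTheory}), which handles exactly operators of this type and gives $C^k$-estimates in terms of a $C^1$ (in fact $C^0$ plus energy) bound. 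Either route is routine but must be spelled out carefully enough that the dependence of $c_k$ on only $c_1$ and $\Om$ (not on $u$) is transparent; the uniformity is what makes this lemma useful for the later compactness arguments.
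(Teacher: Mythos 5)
The paper itself offers no written proof here --- it simply refers to the argument of Robbin--Salamon, Lemma~C.3, noting that the present case is easier because no Lagrangian boundary conditions occur --- and your overall plan (rewrite Floer's equation as a perturbed Cauchy--Riemann system, freeze $J$ in a chart near $u(z_0)$, absorb the small difference, and run a Calder\'on--Zygmund/Schauder bootstrap on nested shrinking domains) is exactly the argument that reference carries out. So the strategy matches the intended one.

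There is, however, a genuine gap in your first step. You invoke Theorem~\ref{Thm_AprioriC0Estimates} to conclude that the image of $u$ lies in the fixed compact set $\hat K$, but that theorem requires the asymptotic conditions~\eqref{Eq_BCwithoutIntegrals}, and those are \emph{not} among the hypotheses of Lemma~\ref{Lem_LocalEstimForCkNorm}. The lemma (like Proposition~\ref{Prop_BoundInC1ImpliesBoundInCk} after it) is stated for an arbitrary solution of~\eqref{Eq_PseudoholomPlaneWithHamPert_mod} subject only to a global bound on $|\partial_s u|$; nothing in the hypotheses forces $u(\Omega)$ into a fixed compact subset of $M$, so the step ``this confines everything to a coordinate-accessible region'' does not follow as written. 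The repair is routine but has to be stated: properties (P\ref{Pty_Properness})--(P\ref{Pty_LocTrivAwayFromK}) imply that $(M,g,J,f)$ has \emph{uniformly bounded geometry} --- outside the compact set $K$ the pair $(\lambda, J)$ is pulled back from a product via the maps $\psi_z$, the fibre $M_0$ is compact, and $\rho$ is constant there --- so in a covering family of charts the metric, $J$, $v^t$ and all their derivatives are bounded by constants depending only on $(M,\omega,J,f)$, independently of where $u(\Omega)$ sits. With that observation replacing the appeal to Theorem~\ref{Thm_AprioriC0Estimates}, your bootstrap (starting from the pointwise identity $|\partial_t u + v^t| = |J\partial_s u| = |\partial_s u|$ to get a uniform $C^1$ bound, then iterating interior elliptic estimates on a shrinking chain of domains $\Omega\Subset\Omega_{k-1}\Subset\cdots\Subset\Omega'$) runs through unchanged and yields constants $c_k$ depending only on $c_1$, $\Omega$ and the fixed geometric data, as the lemma asserts. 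Alternatively one could add~\eqref{Eq_BCwithoutIntegrals} to the hypotheses --- in every later application of the lemma those conditions do hold --- but the statement as written does not assume them, and your proof should not either.
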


The proof of this lemma relies  on the local properties of solutions of Floer's equation and  can be obtained along the same lines as the proof of Lemma~C.3 in~\cite{RobbinSalamon:01_AsymptBehOfHolStrips} (in fact the argument simplifies as we do not need to consider charts with Lagrangian boundary conditions). We omit the details.


\begin{prop}\label{Prop_BoundInC1ImpliesBoundInCk}
  For any integer $k\ge 2$ and any $c_1>0$ there exists $c_k=c_k(c_1)$ with the following significance. For any solution $u$ of~\eqref{Eq_PseudoholomPlaneWithHamPert_mod} the following implication holds:
  \begin{equation*}
    \sup\limits_{\mathbb R^2} |\partial_s u|\le c_1\qquad\Longrightarrow\qquad   \| u\|_{C^k(\R^2)}\le c_k.
  \end{equation*}
\end{prop}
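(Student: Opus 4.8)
The plan is to upgrade the global $C^1$-bound on $u$ (which is exactly the hypothesis $\sup_{\R^2}|\partial_s u|\le c_1$, together with the resulting bound on $|\partial_t u|$ coming from equation~\eqref{Eq_PseudoholomPlaneWithHamPert_mod} and the boundedness of $|v^t|^2=\rho$) to a global $C^k$-bound by patching together the interior estimates of Lemma~\ref{Lem_LocalEstimForCkNorm}. The key observation is that Lemma~\ref{Lem_LocalEstimForCkNorm} gives, for a fixed bounded reference domain $\Om_0\subset\R^2$, a constant $c_k(c_1,\Om_0)$ bounding $\|u\|_{C^k(\Om_0)}$ for \emph{any} solution $u$ with $\sup_{\R^2}|\partial_s u|\le c_1$. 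Since equation~\eqref{Eq_PseudoholomPlaneWithHamPert_mod} has coefficients ($J$ on $M$, and the time-dependent term $v^t=\cos\theta_\nu(t)v_0+\sin\theta_\nu(t)v_1$) whose derivatives in $t$ are bounded \emph{uniformly in $t\in\R$} — here one uses that $\theta_\nu$ is a fixed smooth function with bounded derivatives of all orders, and that $v_0,v_1$ are smooth vector fields on the fixed manifold $M$ — the constant produced by the lemma does not deteriorate as we translate the domain in the $t$-direction.

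Concretely, first I would fix a unit square $Q=[0,1]^2$ (or any fixed bounded domain whose translates cover $\R^2$) and set $\Om_0=(-1,2)\times(-1,2)$, a slightly larger box, so that every point of $\R^2$ lies in some integer translate $Q+(m,n)$ whose enlargement $\Om_0+(m,n)$ we control. For each $(m,n)\in\Z^2$, consider $u_{m,n}(s,t):=u(s+m,t+n)$. This $u_{m,n}$ again solves an equation of the form~\eqref{Eq_PseudoholomPlaneWithHamPert_mod}, but with $\theta_\nu$ replaced by the shifted function $t\mapsto\theta_\nu(t+n)$. The point is that the family $\{\theta_\nu(\cdot+n)\}_{n\in\Z}$ is bounded in $C^k(\Om_0)$ uniformly in $n$ (indeed $\theta_\nu$ and all its derivatives are globally bounded on $\R$), so the proof of Lemma~\ref{Lem_LocalEstimForCkNorm} — which is a local elliptic bootstrapping argument for Floer's equation — applies verbatim with a single constant $c_k=c_k(c_1,\Om_0)$ valid for all the shifted equations simultaneously. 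Hence $\|u_{m,n}\|_{C^k(\Om_0)}\le c_k$ for all $(m,n)$, which translates back to $\|u\|_{C^k(\Om_0+(m,n))}\le c_k$, and taking the supremum over $(m,n)$ gives $\|u\|_{C^k(\R^2)}\le c_k$.

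The one technical subtlety — and the step I would flag as the main thing to be careful about — is the dependence of Lemma~\ref{Lem_LocalEstimForCkNorm} on the $t$-dependent coefficients of the equation. Lemma~\ref{Lem_LocalEstimForCkNorm} as stated fixes $\theta_\nu$, so strictly speaking one needs the (easy) observation that its proof only uses $C^k$-bounds on the coefficient functions over the relevant domain, and these are uniform under $t$-translation. It is worth spelling this out: the interior elliptic estimates for $\bar\partial_J$-type equations used in the Robbin--Salamon-style argument referenced after Lemma~\ref{Lem_LocalEstimForCkNorm} produce constants depending only on $\|J\|_{C^{k}}$ on a neighbourhood of the (relatively compact, by Theorem~\ref{Thm_AprioriC0Estimates}) image of $u$, on $\sup|\partial_s u|$, and on $C^{k-1}$-bounds for the inhomogeneous term $v^t$ over the domain; all of these are translation-invariant. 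Alternatively one can simply note that nothing in the argument distinguishes $(s,t)$ near the origin from $(s,t)$ far out in $t$, since $\theta_\nu$ — though not constant — is a fixed bounded smooth function, so one may as well cover $\R^2$ by fixed-size boxes and apply the lemma on each. I would also remark that $C^0$-compactness of the image, guaranteed by Theorem~\ref{Thm_AprioriC0Estimates} (so that all the manifold-side quantities such as $\|J\|_{C^k}$, injectivity radius, etc.\ are controlled on a fixed compact set), is what makes the constants in the local estimates uniform in the first place; without it the argument would not close.
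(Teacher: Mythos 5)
Your proof is correct and rests on the same idea as the paper's — globalize the local estimate of Lemma~\ref{Lem_LocalEstimForCkNorm} by translation — but the covering scheme differs. The paper applies the lemma once on a box $(0,1)\times(-\nu-2,\nu+2)$, uses $s$-translation invariance of the equation to control the whole strip $\R\times(-\nu-2,\nu+2)$, and then covers the two remaining half-planes by observing that $\theta_\nu$ is constant for $|t|$ large, so that the equation becomes autonomous in $t$ as well and one may translate freely in both variables. You instead tile $\R^2$ by translates of a single fixed box and argue that the constant of Lemma~\ref{Lem_LocalEstimForCkNorm}, applied to each $t$-shifted equation, can be taken uniform because $\theta_\nu$ is globally $C^\infty$-bounded. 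Your variant is slightly more robust: it does not depend on $\theta_\nu$ eventually being constant, and it automatically handles the outer transition intervals of $\theta_\nu$ near $\pm\nu^{-1}$ and $\pm(\nu^{-1}+1)$, which the paper's stated box (with $\nu$ rather than $\nu^{-1}$ in the $t$-range) does not actually contain. The subtlety you flag — upgrading the constant of Lemma~\ref{Lem_LocalEstimForCkNorm} so that it depends only on $C^k$-bounds of the coefficients, not on the literal function $\theta_\nu$ — is exactly the right thing to check, and is implicitly assumed in the paper's argument as well. Your remark on $C^0$-compactness of the image (via Theorem~\ref{Thm_AprioriC0Estimates}) is also a fair caveat shared with the paper, since Lemma~\ref{Lem_LocalEstimForCkNorm} is only meaningful for solutions whose image lies in a fixed compact region of $M$.
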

\begin{proof}
  From  Lemma~\ref{Lem_LocalEstimForCkNorm} we obtain that there exists a constant $c_k$ such that 
\begin{equation}\label{Eq_AuxCkEstimForU}
   \| u\|_{C^k(\bar\Om)}\le c_k,
  \end{equation}
where $\Om=(0,1)\times (-\nu-2,\nu +2)$. This implies that estimate~\eqref{Eq_AuxCkEstimForU} is valid for $\Om=\mathbb R\times(-\nu-2,\nu +2)$ since equation~\eqref{Eq_PseudoholomPlaneWithHamPert_mod} is invariant with respect to shifts in the $s$--variable.
Applying Lemma~\ref{Lem_LocalEstimForCkNorm} to $\Om=(0,1)\times (\nu+1,\nu+2)$  and observing that both $J$ and $v^t$  depend neither on $s$ nor on $t$ provided $t\ge \nu+1$ we obtain that  estimate~\eqref{Eq_AuxCkEstimForU} also  holds for $\Om=\R\times (\nu+1,+\infty)$. Similarly, estimate~\eqref{Eq_AuxCkEstimForU} is valid for $\Om=\R\times (-\nu-1,-\infty)$ as well. This clearly implies the statement of the proposition.
\end{proof}



\begin{thm}
For any integer $k\ge 0$ there exists a constant $c_k>0$ such that for any solution $u$ of~\eqref{Eq_PseudoholomPlaneWithHamPert_mod},\eqref{Eq_BCwithoutIntegrals} with $E_p(u)<\infty$ for some $p\in [2,\infty)$  we have
  \begin{equation*}
   \| u\|_{C^k(\mathbb R^2)}<c_k. 
  \end{equation*}
Here constants $c_k$ depend on $M, J,$ and $f$ but not on $u$.
\end{thm}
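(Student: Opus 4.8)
The plan is to combine the $C^0$-estimate of Theorem~\ref{Thm_AprioriC0Estimates} with the $L^p$-derivative estimates already established. First I would observe that by Theorem~\ref{Thm_AprioriC0Estimates} the image of any solution $u$ of~\eqref{Eq_PseudoholomPlaneWithHamPert_mod},\eqref{Eq_BCwithoutIntegrals} is contained in a fixed compact set $\hat K\subset M\setminus\partial M$; this immediately gives the case $k=0$. Next, to control the first derivatives, I would apply Proposition~\ref{Prop_UnifConvergenceOfDsU}: the hypothesis $E_p(u)<\infty$ for some $p\in[2,\infty)$ is exactly what that proposition needs, and it yields $\sup_{\R^2}|\partial_s u|<\infty$ together with $\sup_{\R^2}|\partial_t u|<\infty$. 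The one subtlety is that the bound on $\sup|\partial_s u|$ produced there a priori depends on $E_p(u)$, which is not uniform over all solutions.

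To get a \emph{uniform} bound on $|\partial_s u|$, I would use the local gradient estimate quoted in the proof of Proposition~\ref{Prop_UnifConvergenceOfDsU} (the inequality $|\partial_s u(s,t)|^2\le \tfrac{8}{\pi r^2}\int_{B_r(s,t)}|\partial_s u|^2 + cr^2$, valid once the local $L^2$-energy is below the threshold $h$) in a bootstrap with the energy identity. By Theorem~\ref{Thm_EnergyId}, $E(u)=\mathscr F(\gamma_+)-\mathscr F(\gamma_-)$ is controlled purely by the endpoints $\gamma_\pm$, and since all data lie in the compact set $\hat K$ and the relevant asymptotics are quadratic, the total energy $E(u)$ is bounded by a constant depending only on $M,J,f$ (not on $u$). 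Combined with~\eqref{Eq_LimitsDsU}, which forces $\partial_s u\to 0$ at infinity uniformly, one sees that $\sup_{\R^2}|\partial_s u|$ is attained and can be bounded in terms of the total energy and $c$; hence $\sup_{\R^2}|\partial_s u|\le c_1$ with $c_1$ depending only on $M,J,f$. Consequently $\sup_{\R^2}|\partial_t u|\le c_1'$ as well, using $|v^t|^2=\rho$ bounded.

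Once the uniform $C^1$-bound $\sup_{\R^2}|\partial_s u|\le c_1$ is in hand, the higher derivatives follow directly from Proposition~\ref{Prop_BoundInC1ImpliesBoundInCk}: for each $k\ge 2$ there is $c_k=c_k(c_1)$ with $\|u\|_{C^k(\R^2)}\le c_k$, and since $c_1$ depends only on $M,J,f$, so does each $c_k$. Assembling the cases $k=0$ (Theorem~\ref{Thm_AprioriC0Estimates}), $k=1$ (the uniform gradient bound above), and $k\ge 2$ (Proposition~\ref{Prop_BoundInC1ImpliesBoundInCk}) gives the claimed family of constants.

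The main obstacle I anticipate is making the $C^1$-bound genuinely \emph{uniform} rather than solution-dependent. Proposition~\ref{Prop_UnifConvergenceOfDsU} gives finiteness of $\sup|\partial_s u|$ for each individual solution, but the argument there uses the cutoff radius $R_\e$ which depends on how fast $\|\partial_s u\|_{L^p}$ decays — i.e.\ on $u$. The fix is to route everything through the $L^2$-energy: Theorem~\ref{Thm_EnergyId} bounds $E_2(u)$ uniformly via the endpoints, which lie in $\hat K$, so the local energy $\int_{B_r(s,t)}|\partial_s u|^2$ is uniformly small for $r$ small, the threshold hypothesis $h$ in the mean-value inequality is uniformly satisfied, and the resulting pointwise bound is uniform. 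One must be a little careful that the constants $h,c$ in that mean-value inequality depend only on $M,J,\om,f$ — which is asserted in~\cite{Salamon:90MorseTheory} — and that the asymptotic decay~\eqref{Eq_LimitsDsU} is used only to ensure the supremum is not escaping to infinity. With those points checked, the proof is essentially an orchestration of results already proved in this section.
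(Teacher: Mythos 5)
Your framing—$k=0$ from Theorem~\ref{Thm_AprioriC0Estimates}, individual finiteness of $\sup|\partial_s u|$ from Proposition~\ref{Prop_UnifConvergenceOfDsU}, and the bootstrap to higher $k$ via Proposition~\ref{Prop_BoundInC1ImpliesBoundInCk}—matches the paper's structure. The gap is in the step where you upgrade the individual gradient bound to a \emph{uniform} one. You claim that a uniform bound on $E_2(u)$ implies ``the local energy $\int_{B_r(s,t)}|\partial_s u|^2$ is uniformly small for $r$ small,'' so that the threshold hypothesis of the mean-value inequality is met uniformly. This is exactly what can fail: a bound on total energy does not prevent the energy from concentrating on a shrinking ball as $u$ ranges over the family (the bubbling phenomenon), so uniform smallness of local energy must be \emph{proved}, not asserted. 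The paper supplies precisely this missing ingredient: since $\om=d\lambda$ is exact there are no non-constant $J$-holomorphic spheres, hence the rescaling/bubbling argument shows that $\|\nabla u\|_{L^\infty}$ cannot blow up along any sequence of solutions, giving $c_1=\sup_u\|\nabla u\|_{L^\infty}<\infty$ directly. Your route begs the question at the point where you need no-bubbling.

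There is also a secondary circularity worry in your reliance on Theorem~\ref{Thm_EnergyId}. That theorem is stated for solutions of the full problem~\eqref{Eq_PseudoholomPlaneWithHamPert_mod}--\eqref{Eq_PseudoholomPlaneWithHamPert_BC_s}, i.e.\ including the integral asymptotics in~\eqref{Eq_PseudoholomPlaneWithHamPert_BC_t}--\eqref{Eq_PseudoholomPlaneWithHamPert_BC_s}, whereas the present theorem only assumes~\eqref{Eq_BCwithoutIntegrals} together with $E_p(u)<\infty$ for some $p\ge 2$. That these weaker hypotheses imply the integral conditions, and also imply $E_2(u)<\infty$ when $p>2$, is established only later (Theorem~\ref{Thm_ExpDecay} and Corollary~\ref{Cor_FiniteEnergyImplyBCs}) and those results use the very $C^\infty$-bounds you are trying to prove. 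The paper's proof avoids both problems by never invoking the energy identity here: it combines Proposition~\ref{Prop_UnifConvergenceOfDsU} with the exactness of $\om$ (no bubbling) and then hands off to Proposition~\ref{Prop_BoundInC1ImpliesBoundInCk}.
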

\begin{proof}
  It follows from  Proposition~\ref{Prop_UnifConvergenceOfDsU} that for any solution $u$ of~\eqref{Eq_PseudoholomPlaneWithHamPert_mod} with $E_p(u)<\infty$  we have 
\[
\|\nabla u\|_{L_\infty}=\max\,\bigl\{\sup_{\R^2}|\partial_su|,\ \sup_{\R^2}|\partial_t u| \bigr\}<\infty.
\]
Since the symplectic form is exact, the  bubbling phenomenon does not happen. This in turn implies that 
\[
c_1=\sup_{u\in\mathcal M(\gamma_-,\gamma_+)}\| \nabla u\|_{L_\infty}<\infty.
\]
The rest follows immediately from Proposition~\ref{Prop_BoundInC1ImpliesBoundInCk}.
\end{proof}

\begin{cor}\label{Cor_AprioriEstimForCkNorms}
For any integer $k\ge 0$ there exists a constant $c_k>0$ such that for any solution $u$ of~\eqref{Eq_PseudoholomPlaneWithHamPert_mod}--\eqref{Eq_PseudoholomPlaneWithHamPert_BC_s}  we have
  \begin{equation*}
   \| u\|_{C^k(\mathbb R^2)}<c_k. 
  \end{equation*}
\end{cor}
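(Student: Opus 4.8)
The plan is to deduce Corollary~\ref{Cor_AprioriEstimForCkNorms} from the preceding theorem by verifying that any solution of the full boundary value problem~\eqref{Eq_PseudoholomPlaneWithHamPert_mod}--\eqref{Eq_PseudoholomPlaneWithHamPert_BC_s} automatically has finite $L^p$-energy for every $p\in[2,\infty)$, so that the hypothesis of the theorem is met. First I would invoke Theorem~\ref{Thm_EnergyId} (the energy identity), which applies to any $C^1$-solution of~\eqref{Eq_PseudoholomPlaneWithHamPert_mod}--\eqref{Eq_PseudoholomPlaneWithHamPert_BC_s} and gives $E(u)=E_2(u)=\mathscr F(\gamma_+)-\mathscr F(\gamma_-)<\infty$. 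This takes care of the case $p=2$. Combined with the $C^0$-estimate of Theorem~\ref{Thm_AprioriC0Estimates}, which confines the image of $u$ to the fixed compact set $\hat K$, and the local elliptic regularity for Floer's equation underlying Lemma~\ref{Lem_LocalEstimForCkNorm}, one already gets $\sup_{\R^2}|\partial_s u|<\infty$ exactly as in the proof of Proposition~\ref{Prop_UnifConvergenceOfDsU} (the pointwise bound $|\partial_s u(s,t)|^2\le \tfrac{8}{\pi r^2}\int_{B_r(s,t)}|\partial_s u|^2+cr^2$ together with $E_2(u)<\infty$).

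Next I would upgrade $E_2(u)<\infty$ to $E_p(u)<\infty$ for all finite $p$. Since $|\partial_s u|$ is bounded on $\R^2$ and tends to $0$ as $|s|+|t|\to\infty$ (again by the argument of Proposition~\ref{Prop_UnifConvergenceOfDsU}, which needs only $E_2(u)<\infty$), we have $|\partial_s u|^p\le \|\partial_s u\|_{L^\infty}^{p-2}|\partial_s u|^2$, hence $\int_{\R^2}|\partial_s u|^p\le \|\partial_s u\|_{L^\infty}^{p-2}E_2(u)<\infty$. For the $\partial_t u+v^t$ term one uses equation~\eqref{Eq_PseudoholomPlaneWithHamPert_mod}, namely $\partial_t u+v^t=-J\,\partial_s u$, so $|\partial_t u+v^t|=|\partial_s u|$ pointwise and the same bound applies. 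Therefore $E_p(u)<\infty$ for every $p\in[2,\infty)$, and in particular for some such $p$.

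With finite $E_p$-energy established, the corollary follows directly from the theorem just proved: for each $k\ge 0$ there is a constant $c_k$, depending only on $M$, $J$, $f$, with $\|u\|_{C^k(\R^2)}<c_k$ for every solution of~\eqref{Eq_PseudoholomPlaneWithHamPert_mod},\eqref{Eq_BCwithoutIntegrals} of finite $L^p$-energy, and every solution of~\eqref{Eq_PseudoholomPlaneWithHamPert_mod}--\eqref{Eq_PseudoholomPlaneWithHamPert_BC_s} is such a solution. I do not expect a genuine obstacle here; the only point requiring a little care is the logical bookkeeping — confirming that the boundary conditions~\eqref{Eq_PseudoholomPlaneWithHamPert_BC_t}--\eqref{Eq_PseudoholomPlaneWithHamPert_BC_s} are precisely what is needed to feed into both Theorem~\ref{Thm_EnergyId} (for the energy bound) and Theorem~\ref{Thm_AprioriC0Estimates} (whose hypothesis~\eqref{Eq_BCwithoutIntegrals} is the pointwise part of~\eqref{Eq_PseudoholomPlaneWithHamPert_BC_t}--\eqref{Eq_PseudoholomPlaneWithHamPert_BC_s}), and that no circularity arises from the fact that the $C^0$-estimate is invoked both in proving finiteness of the higher energies and in the final $C^k$-bound. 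The mild subtlety, if any, is ensuring that the $C^1$-regularity required to apply the energy identity is available a priori; this is supplied by the Corollary following Proposition~\ref{Prop_UnifConvergenceOfDsU}, which promotes $C^0$-convergence in the boundary conditions to $C^1$-convergence once $E_p(u)<\infty$ — so one should first note $E_2(u)<\infty$ is accessible via the energy identity applied to the nominally $C^1$ solution, or else argue the energy identity directly for $C^1$ solutions, which is how Theorem~\ref{Thm_EnergyId} is already stated.
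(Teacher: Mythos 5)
Your proof is correct, and it takes the only natural route: Theorem~\ref{Thm_EnergyId} supplies $E_2(u)<\infty$, so the preceding theorem applies with $p=2$. The paper itself offers no proof, so the deduction is left implicit, but your first paragraph already contains everything that is needed.

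That said, your middle paragraph is entirely superfluous. The theorem is stated for solutions with $E_p(u)<\infty$ for \emph{some} $p\in[2,\infty)$, and $p=2$ is already in hand from the energy identity; there is no reason to upgrade to $E_p(u)<\infty$ for all finite $p$ (and indeed, if one wanted that, it is already established in greater generality as part (i) of Theorem~\ref{Thm_ExpDecay}). The worry you raise at the end about $C^1$-regularity and possible circularity is also a non-issue: any solution of the elliptic equation~\eqref{Eq_PseudoholomPlaneWithHamPert_mod} is automatically smooth by local elliptic regularity (the paper says this explicitly later: ``Clearly, any solution of $\Sigma(u)=0$ is a smooth map''), and the corollary after Proposition~\ref{Prop_UnifConvergenceOfDsU} concerns the convergence mode in the boundary conditions, not the interior regularity of $u$, so it plays no role in licensing the application of Theorem~\ref{Thm_EnergyId}. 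One small typographical slip: from~\eqref{Eq_PseudoholomPlaneWithHamPert_mod} one gets $\partial_t u + v^t = J\,\partial_s u$ rather than $-J\,\partial_s u$ (since $J^{-1}=-J$), though this does not affect the norm equality you use.
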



\subsection{Asymptotic behaviour}

Pick any smooth curve $\gamma\colon\R\to M$ such that $\gamma(t)\to m_\pm$ as $t\to\mp\infty$ and denote
\begin{equation}\label{Eq_Map_sigma}
\sigma (\gamma)=\sigma_\nu(\gamma)=\dot\gamma +v^t(\gamma)\in\Gamma(\gamma^*TM). 
\end{equation}
Obviously, $\sigma(\gamma)=0$ if and only if $\gamma$ is a broken flow line of $f$. Consider the linearisation of $\sigma$ at the point $\gamma$:
\[
D_\gamma\sigma(\eta)=\nabla_t\,\eta +\nabla_\eta v^t,\qquad \eta\in\Gamma(\gamma^*TM).
\] 

From now on we assume that all broken flow lines of $f$ are generic. To be more precise, we assume that the following hypothesis holds. 

\begin{hypothesis}\label{Hyp_NondegOfBrokenFlowLines}
  \textit{Nondegeneracy of broken flow lines.} All solutions of~\eqref{Eq_PertBrokenFlowLine} are nondegenerate in the following sense: The operator 
  \begin{equation}\label{Eq_DsigmaW12L2}
    D_\gamma\sigma\colon W^{1,2}(\gamma^*TM)\longrightarrow L^2(\gamma^*TM)
  \end{equation}
is an isomorphism.
\end{hypothesis}

\begin{rem}
  It is proved in Appendix~\ref{App_OnThePertAntigradFlowLines} that hypothesis~(H\ref{Hyp_NondegOfBrokenFlowLines}) holds provided the vanishing cycles corresponding to the segments $\overline{z_0z_\pm}$ intersect transversely in $M_0$. 
\end{rem}


Similarly, pick a smooth map $u\colon\R^2\to M$ satisfying boundary conditions~\eqref{Eq_BCwithoutIntegrals} and denote $\Sigma(u)=\partial_s u + J\bigl (\partial_t u + v^t(u)\bigr )\in\Gamma(u^*TM)$. Consider the linearisation of $\Sigma$ at the point $u$:
\begin{equation}\label{Eq_LinerizationOfHolPlane}
  \begin{aligned}
   D_u\Sigma(\xi) &=\cd s\xi + J\bigl ( \cd t\xi +\cd \xi v^t  \bigr ) + \nabla_\xi J (\partial_t u + v^t),\\%
		    &= \cd s \xi+ J\cd t \xi +\cos\theta_\nu \cd\xi v_1 -\sin\theta_\nu \cd\xi v_0+ \cd \xi\! J (\partial_t u),
 \end{aligned}
\end{equation}
where $\xi\in\Gamma(\mathbb R^2;u^*TM)$.


\begin{rem}
  The maps $\sigma$ and $\Sigma$ can be viewed as sections of certain Banach bundles (see pp.\pageref{Eq_AuxW12RM} and~\pageref{Eq_AuxDuSigmaBanachSpaces} for details). However this is not needed for the purposes of this subsection.
\end{rem}

It is convenient to  choose a unitary trivialization $\Psi$ of $u^*TM$. Recall that for each $(s,t)\in\R^2$ the map $\Psi(s,t)\colon \R^{2n}\to T_{u(s,t)}M$ is a linear isomorphism of complex Hermitian vector spaces, where $\R^{2n}$ is considered to be equipped with the standard complex structure and the standard symplectic form:
\[
J_0=
\begin{pmatrix}
  0 & -\mathbbm 1\\
\mathbbm 1 & \phantom{-}0
\end{pmatrix},\qquad\quad \om_0(\xi, \eta)=\xi^t J_0\eta,\quad \xi,\eta\in\mathbb R^{2n}.
\]
Also denote by $\psi_\pm$ the restriction of $\Psi$ to $\gamma_\pm$.

\begin{rem}\label{Rem_UnitaryTrivialization}
  One such trivialization can be constructed as follows. Choose a basis of $T_{m_-}M$ and trivialise $\gamma_-^*TM$ with the help of the parallel transport along $\gamma_-$. Then trivialise $u^*TM$ by doing parallel transport along the curves $\beta_t(\cdot)=u(\cdot, t)$.    
\end{rem}



With the help of the trivialisations chosen above we can think of the operators  $D_{\gamma_\pm}\sigma$ and $D_u\Sigma$ as acting on vector-valued functions. More precisely, there exist matrix-valued functions  $S(s,t)$ and $S_\pm(t)$ such that%
\begin{align*}
 & \Psi(s,t)\bigl ( \partial_s\xi + J_0\partial_t\xi + S(s,t)\xi  \bigr)=D_u\Sigma \bigl ( \Psi(s,t)\xi  \bigr ),\qquad & &\text{for all }\xi\in C^{\infty}(\mathbb R^2;\mathbb R^{2n});\\
 & \psi_\pm(t)\bigl ( \dot\eta -J_0 S_\pm(t)\eta  \bigr )=D_{\gamma_\pm}\sigma\bigl ( \psi_\pm(t)\eta  \bigr ), & &\text{for all }\eta\in C^{\infty}(\mathbb R;\mathbb R^{2n}).
\end{align*}
Explicitly, matrices $S$ and $S_\pm$ are given by the relations
\begin{align}
   \Psi(s,t)S(s,t)&=\nabla_s\Psi  + J\bigl ( \nabla_t\Psi + \nabla_{\Psi}v^t  \bigr ) +\nabla_{\Psi}J (\partial_tu + v^t),\label{Eq_MatrixS}\\
 \psi_\pm(t)S_\pm(t) &= J\bigl (\nabla_t\psi_\pm + \nabla_{\psi_\pm}v^t\bigr). \label{Eq_MatricesSpm}
\end{align}
To simplify the notations, denote also by $L$ and $l_\pm$ the operators representing $D_u\Sigma$ and $D_{\gamma_\pm}\sigma$ with respect to the chosen trivialisation:
\begin{equation}\label{Eq_LiniarizationInAFrame}
  L =\partial_s +J_0\partial_t +S(s,t),\qquad\quad l_\pm=\frac d{dt} -J_0S_\pm(t).
\end{equation}

\begin{lem}\label{Lem_ExpDecayForAnOperator}
  Assume the  following holds:
\begin{itemize}
  \item[(i)] $S\colon\R^2\to M_{2n}(\R)$ is $C^\infty$--bounded\footnote{this means that $S$ as well as all derivatives are bounded on $\R^2$};
  \item[(ii)] $S(s,t)$ converges to $S_\pm(t)$ in the $C^0(\R)$--topology as $s\to\mp\infty$;
 \item[(iii)]  The operators $l_\pm\colon W^{1,2}(\mathbb R;\mathbb R^{2n})\rightarrow L^{2}(\mathbb R;\mathbb R^{2n}) $ are invertible;
  \item[(iv)] $ \lim\limits_{s\to\pm\infty}\sup\limits_t \| \partial_sS(s,t)  \|=0$.
\end{itemize}
Let $\xi$ be a solution of the equation $D\xi=0$. If $\xi\in L^p(\R^2; \R^{2n})$ for some $p\in (1,+\infty)$, then there exist positive constants $C$ and $\delta$ such that
\begin{equation*}
  |\xi(s,t)|\le Ce^{-\delta |s|}\qquad \text{for all } (s,t)\in\R^2.
\end{equation*}
\end{lem}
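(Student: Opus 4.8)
The statement is a standard exponential-decay estimate for solutions of an asymptotically translation-invariant Cauchy--Riemann-type operator $L=\partial_s+J_0\partial_t+S(s,t)$ whose limit operators $l_\pm$ are invertible. The plan is to exploit the invertibility of $l_\pm$ to produce a spectral gap and then run a standard differential-inequality argument on the $L^2$-norm of $\xi$ along the slices $\{s\}\times\R$.

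\textbf{Step 1: reduce to $L^2$ and bootstrap.} First I would observe that by elliptic regularity for the operator $L$ (using (i), i.e.\ $C^\infty$-boundedness of $S$), a solution $\xi\in L^p(\R^2;\R^{2n})$ of $L\xi=0$ is automatically smooth and lies in $W^{k,q}$ on all half-planes $H_\tau$ for every $k$ and every $q\in(1,\infty)$; in particular $\xi(s,\cdot)\in W^{1,2}(\R)$ for each $s$, and $\sup_t|\xi(s,t)|\to 0$ as $|s|\to\infty$. Hence it suffices to prove exponential decay of the function $g(s)=\|\xi(s,\cdot)\|_{L^2(\R)}^2$ and then upgrade to a pointwise $C^0$-bound via the interior elliptic estimate $|\xi(s,t)|\le C\|\xi\|_{W^{1,2}(B_1(s,t))}\le C'\sup_{|s'-s|\le 1}g(s')^{1/2}$ (together with an $L^2$-to-$C^0$ bootstrap using that $\partial_s\xi,\partial_t\xi$ satisfy similar equations).

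\textbf{Step 2: the differential inequality.} Writing $\xi_s:=\xi(s,\cdot)$, the equation $L\xi=0$ gives $\partial_s\xi_s=-(J_0\partial_t+S(s,\cdot))\xi_s=:-A(s)\xi_s$, where $A(s)=J_0\partial_t+S(s,\cdot)$ is, for $s$ near $\pm\infty$, a small perturbation of the self-adjoint-up-to-sign operator $A_\pm=J_0\partial_t+S_\pm$ associated with $l_\pm=\tfrac{d}{dt}-J_0S_\pm$ (note $l_\pm=-J_0A_\pm$, so invertibility of $l_\pm$ on $W^{1,2}\to L^2$ is equivalent to $A_\pm$ being invertible, hence to $0$ not being in the spectrum). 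A standard computation shows $g''(s)=2\|\partial_s\xi_s\|^2+2\langle\xi_s,\partial_s^2\xi_s\rangle=2\|A(s)\xi_s\|^2-2\langle\xi_s,(\partial_sA(s))\xi_s\rangle+2\langle\xi_s,A(s)^2\xi_s\rangle$; using hypothesis (iv) to absorb the $\partial_sA(s)=\partial_sS$ term and the convergence (ii) to compare $A(s)$ with $A_\pm$, one gets, for $|s|$ large, $g''(s)\ge (4\delta^2-\varepsilon)g(s)$ for a gap constant $\delta>0$ coming from $\|A_\pm w\|\ge 2\delta\|w\|$ (invertibility + self-adjoint-type spectral theory). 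The clean way to package this is the standard lemma: if $g\ge 0$, $g(s)\to 0$ as $|s|\to\infty$, and $g''\ge \delta'^2 g$ on $|s|\ge s_0$, then $g(s)\le g(s_0)e^{-\delta'(|s|-s_0)}$ — proved by comparing with the explicit solution $e^{\pm\delta' s}$ via the maximum principle for the ODE.

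\textbf{Step 3: conclude.} Combining Step 2 (exponential decay of $g$ on $|s|\ge s_0$) with the boundedness of $g$ on the compact interval $|s|\le s_0$ (from $\xi\in L^p$ and smoothness) gives $g(s)\le C e^{-\delta|s|}$ on all of $\R$, and then Step 1's pointwise estimate yields $|\xi(s,t)|\le Ce^{-\delta|s|}$ after shrinking $\delta$ slightly.

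\textbf{Main obstacle.} The delicate point is Step 2: making rigorous the passage from the invertibility of the unbounded operators $l_\pm$ (equivalently $A_\pm$) to a quantitative spectral gap $\|A_\pm w\|\ge 2\delta\|w\|_{L^2}$ uniformly, and then showing that the $s$-dependent operator $A(s)$ inherits this gap for $|s|$ large — this requires the $C^0$-convergence (ii) to control $A(s)-A_\pm$ as a bounded perturbation on $L^2$, and care that $A(s)$ is not literally self-adjoint (it is a CR-type operator), so one works with $\|A(s)\xi_s\|^2$ and $\langle\xi_s,A(s)^2\xi_s\rangle$ rather than a naive spectral estimate. Handling the first-derivative term via (iv) and justifying the interchange of $\partial_s$ with the $t$-integral (legitimate by the regularity from Step 1) are the remaining technical chores. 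Everything else is routine. This is the standard Robbin--Salamon-type argument, and I would cite \cite{RobbinSalamon:01_AsymptBehOfHolStrips} for the analogous computation while supplying the adaptation.
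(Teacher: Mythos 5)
Your proposal is correct and follows essentially the same route as the paper: elliptic regularity to bootstrap to smoothness and $L^2$-slices, the convexity argument $f''(s)\ge\delta^2 f(s)$ for $f(s)=\tfrac12\|\xi(s,\cdot)\|_{L^2}^2$ driven by the spectral gap of the limit operators $l_\pm$ together with hypothesis (iv) to absorb the $\partial_sS$ term, and then a pointwise upgrade (you via interior elliptic estimates, the paper via the subharmonicity-type mean value inequality $\Delta|\xi|^2\ge -C|\xi|^2$ -- these are interchangeable here). The paper also explicitly credits the argument to Lemma 2.11 of Salamon's Floer homology lectures, in the same spirit as your reference to Robbin--Salamon.
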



\begin{proof}

Let $\xi\in L^p(\R^2; \R^{2n})$ be a solution of the equation $D\xi=0$. 
Since $D$ is $C^\infty$--bounded and uniformly elliptic, $\xi$ belongs to $W^{k,\hat p}(\R^2;\R^{2n})$ for all $k$ and $\hat p\in (1,+\infty)$~\cite{Shubin:91_SpectralTheoryOnNoncompMflds}.  In particular, $\xi$ is smooth and for any $s\in\R$ the function $\xi(s,\cdot)$ belongs to $W^{k,2}(\R;\R^{2n})$ for all $k$. The rest of the proof is obtained by applying similar arguments to those used in the proof of Lemma~2.11 in~\cite{Salamon:97_LecturesOnFloerHomology}. For the reader's convenience we repeat the main steps here.

Define
\begin{equation*}
  f(s)=\frac 12 \int\limits_{-\infty}^{+\infty} |\xi(s,t)|^2dt.
\end{equation*}
Then
\begin{align*}
 f''(s)&=\int\limits_{-\infty}^{+\infty}\Bigl ( |\partial_s\xi|^2 +\langle \xi, \partial^2_{ss}\xi \rangle  \Bigr )\,dt\\
     & =2\int\limits_{-\infty}^{+\infty} |\partial_s\xi|^2dt + \int\limits_{-\infty}^{+\infty}\langle \xi, (\partial_{s}S)\xi \rangle\, dt\\
    &\ge 2\int\limits_{-\infty}^{+\infty} |J_0\partial_t\xi +S\xi|^2dt -\e \int\limits_{-\infty}^{+\infty}|\xi|^2dt\\
    &\ge \delta^2  \int\limits_{-\infty}^{+\infty}|\xi|^2dt\\
    & = \delta^2 f(s).
\end{align*}
Here we have used the fact that the operator $J_0\partial_t +S(s,t)$ is invertible for $s\ge s_0$ and also the following equality:
\begin{align*}
  \int\limits_{-\infty}^{+\infty}\langle \xi, \partial_t(J_0\partial_{s}\xi) \rangle\, dt
  =&  \int\limits_{-\infty}^{+\infty}\partial_t\langle \xi, J_0\partial_{s}\xi \rangle\, dt
         -  \int\limits_{-\infty}^{+\infty}\langle \partial_t\xi, J_0\partial_{s}\xi \rangle\, dt\\
  =& 0-  \int\limits_{-\infty}^{+\infty}\langle J_0\partial_s\xi +J_0S\xi, J_0\partial_{s}\xi \rangle\, dt\\
 =& - \int\limits_{-\infty}^{+\infty} |\partial_s\xi|^2dt - \int\limits_{-\infty}^{+\infty} \langle S\xi, \partial_{s}\xi \rangle\, dt.
\end{align*}
The inequality $f''(s)\ge\delta^2 f(s)$ implies $ f(s) \le C_1e^{-\delta |s|}$. On the other hand, there exists a constant $C_2$ such that for all solutions of the equation $D\xi=0$ we have the estimate%
\begin{equation*}
  \Delta |\xi|^2\ge -C_2|\xi|^2.
\end{equation*}
This  implies the mean value inequality%
\begin{equation*}
  |\xi(s,t)|^2\le \frac {C_3}{r^2}\int_{B_r(s,t)}|\xi|^2 dsdt.
\end{equation*}
Taking into account the exponential decay of $f$ we obtain the statement of this lemma from the last inequality.
\end{proof}

\vskip0.3cm


\begin{lem}\label{Lem_EstimOnS}
 Assume $u$ is a solution of~\eqref{Eq_PseudoholomPlaneWithHamPert_mod},\eqref{Eq_BCwithoutIntegrals}. Then there exist positive constants $C_\pm$ such that the estimates
  \begin{align}
    \| S(s,t) - S_\pm(t)  \|\le 
        C_\pm\max \bigl \{ |\partial_su(s,t)|,\, d\bigl ( u(s,t),\gamma_\pm(t)   \bigr)  \bigr \}\label{Eq_EstForMatrixS}
   \end{align}
hold for all $t$ and all $s$ such that $\mp s\ge 0$. 
\end{lem}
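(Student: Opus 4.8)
The plan is to compare the two matrix-valued functions $S$ and $S_\pm$ by subtracting their defining relations \eqref{Eq_MatrixS} and \eqref{Eq_MatricesSpm} and estimating the resulting difference in terms of how far $u(s,t)$ is from $\gamma_\pm(t)$ together with the size of $\partial_s u(s,t)$. Fix the sign, say the $+$ case, so we work with $s\le 0$; the other case is symmetric. First I would recall that by Remark~\ref{Rem_UnitaryTrivialization} the unitary trivialisation $\Psi$ of $u^*TM$ restricts along $\gamma_+$ (in the limit $s\to-\infty$) to the trivialisation $\psi_+$ used to define $S_+$, and that $\Psi$ is built by parallel transport along the curves $\beta_t(\cdot)=u(\cdot,t)$; this is what allows us to transport both $T_{u(s,t)}M$ and $T_{\gamma_+(t)}M$ to the same fixed $\mathbb{R}^{2n}$ and literally subtract the corresponding endomorphisms.

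The main step is a pointwise Lipschitz-type estimate. Writing out \eqref{Eq_MatrixS} and \eqref{Eq_MatricesSpm} in the trivialisation, $S(s,t)$ is assembled from $J$, $v^t$, their covariant derivatives $\nabla v^t$, $\nabla J$, and the quantities $\nabla_s\Psi$, $\nabla_t\Psi$, $\partial_t u$, all evaluated at $u(s,t)$, while $S_+(t)$ is the same expression with $u(s,t)$ replaced by $\gamma_+(t)$ and $\Psi$ replaced by $\psi_+$. Two mechanisms produce the difference: (a) the ``base point'' has moved from $\gamma_+(t)$ to $u(s,t)$, and since $J$, $v^t=\cos\theta_\nu v_0+\sin\theta_\nu v_1$, $\nabla J$, $\nabla v^t$ are smooth tensors on the compact set $\hat K$ from Theorem~\ref{Thm_AprioriC0Estimates} (which by Corollary~\ref{Cor_AprioriEstimForCkNorms} contains the image of $u$ and of all $\gamma_\pm$), this contributes a term bounded by $C\,d(u(s,t),\gamma_+(t))$; (b) $\partial_t u$ differs from $\dot\gamma_+$, but equation \eqref{Eq_PseudoholomPlaneWithHamPert_mod} gives $\partial_t u = -J\partial_s u - v^t(u)$, whereas $\dot\gamma_+ = -v^t(\gamma_+)$, so $\partial_t u - (\text{parallel transport of }\dot\gamma_+) = -J\partial_s u + (v^t(\gamma_+) - v^t(u))$, contributing a term bounded by $C(|\partial_s u(s,t)| + d(u(s,t),\gamma_+(t)))$. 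The connection terms $\nabla_s\Psi$ and $\nabla_t\Psi$ are handled the same way: since $\Psi$ is parallel along $\beta_t$, we have $\nabla_s\Psi\equiv 0$, killing that term outright, while $\nabla_t\Psi$ versus $\nabla_t\psi_+$ is again controlled by the distance between the curves once one uses standard ODE-comparison estimates for parallel transport (using that by Corollary~\ref{Cor_AprioriEstimForCkNorms} all the relevant $C^k$-norms of $u$ are uniformly bounded). Collecting these, one obtains $\|S(s,t)-S_+(t)\|\le C_+\max\{|\partial_s u(s,t)|, d(u(s,t),\gamma_+(t))\}$ with $C_+$ depending only on $M$, $J$, $f$ and the uniform $C^1$-bound on $u$, which is the assertion.

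The step I expect to be the main obstacle is making precise the comparison of the two trivialisations, i.e. controlling $\nabla_t\Psi(s,t)$ against $\nabla_t\psi_+(t)$ and, more basically, identifying $\Psi(s,t)$ with $\psi_+(t)$ up to an error controlled by $d(u(s,t),\gamma_+(t))$. This is a routine-but-fiddly argument comparing parallel transports along two nearby curves in $\hat K$: one differentiates the parallel transport equation in the direction interpolating $u(s,t)$ and $\gamma_+(t)$, uses the uniform curvature bound on $\hat K$ and the uniform $C^1$-bounds from Corollary~\ref{Cor_AprioriEstimForCkNorms}, and applies Gr\"onwall. I would carry out the base-point estimates (a) and (b) in detail first, since they give the shape of the bound, and then fold in the trivialisation comparison as the final ingredient, since it is exactly the kind of estimate standard in the asymptotic analysis of Floer trajectories (cf.\ the arguments in~\cite{RobbinSalamon:01_AsymptBehOfHolStrips}).
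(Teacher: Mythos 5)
Your proposal follows essentially the same route as the paper's proof: insert $v^t$ and use the Floer equation \eqref{Eq_PseudoholomPlaneWithHamPert_mod} to bound $\Psi^{-1}(\partial_t u + v^t(u))$ by $|\partial_s u|$, then use smoothness of the relevant tensors on the compact set $\hat K$ to control $\Psi^{-1}v^t(u)-\psi_\pm^{-1}v^t(\gamma_\pm)$ by $d(u(s,t),\gamma_\pm(t))$, and feed this into \eqref{Eq_MatrixS}--\eqref{Eq_MatricesSpm}. The parallel-transport comparison you flag as the fiddly step is exactly what the paper leaves implicit in its final ``then follows from formulae'' sentence, and your observation that $\nabla_s\Psi\equiv 0$ (by the choice of trivialisation in Remark~\ref{Rem_UnitaryTrivialization}) is a useful simplification of that step.
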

\begin{proof}
  With the help of equations~\eqref{Eq_BrokenFlowLine} and~\eqref{Eq_PseudoholomPlaneWithHamPert_mod} we obtain
  \begin{align*}
    \bigl | \Psi(s,t)^{-1} \partial_t u(s,t)-\psi_\pm^{-1}(t)\dot\gamma_\pm(t) \bigr | %
      &\le \bigl |   \Psi(s,t)^{-1}(\partial_t u(s,t) +v^t(u(s,t)) \bigr |\\ 
          &+\bigl |   \Psi(s,t)^{-1}v^t(u(s,t)) - \psi_\pm^{-1}(t)v^t(\gamma_\pm(t))  \bigr |\\
     & \le \tilde C_\pm\max  \bigl \{ |\partial_su(s,t)|,\, d\bigl ( u(s,t),\gamma_\pm(t)   \bigr)  \bigr \}
  \end{align*}
for some  positive constants $\tilde C_\pm$ and for all  $t,s$ as in the statement of the Lemma. Estimate~\eqref{Eq_EstForMatrixS} then follows from  formulae~\eqref{Eq_MatrixS},\eqref{Eq_MatricesSpm}, and the  above inequality.
\end{proof}



\begin{thm}[Exponential decay] \label{Thm_ExpDecay}
Let $u$ be a solution of~\eqref{Eq_PseudoholomPlaneWithHamPert_mod},\eqref{Eq_BCwithoutIntegrals} with  $E_p(u)<\infty$ for some $p\in [2,\infty)$. Then the following holds: 
  \begin{itemize}
  \item[(i)]  $\partial_su\in W^{k,\hat p}(\R^2; u^*TM)$ for all $k$ and all $\hat p\in (1,\infty)$. In particular, $E_{\hat p}(u)<\infty$ for all $\hat p\in (1,\infty)$.
  \item[(ii)]  There exist positive constants $C,\delta$ such that the inequality 
\[
|\partial_su(s,t)|\le Ce^{-\delta |s|}
\]
holds for any $(s,t)\in\R^2$.
  \end{itemize}
\end{thm}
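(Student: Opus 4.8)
The plan is to bootstrap the a priori $C^k$-bounds already established (Corollary~\ref{Cor_AprioriEstimForCkNorms} and the preceding theorem) into the elliptic estimate needed to invoke Lemma~\ref{Lem_ExpDecayForAnOperator}, applied to $\xi=\partial_su$ viewed in the unitary trivialization $\Psi$. First I would prove part~(i). Differentiating Floer's equation~\eqref{Eq_PseudoholomPlaneWithHamPert_mod} in $s$ shows that $\partial_su$ satisfies a linear equation of the form $D_u\Sigma(\partial_su)=0$, i.e. $L\eta=0$ in the trivialization, where $\eta=\Psi^{-1}\partial_su$ and $L=\partial_s+J_0\partial_t+S(s,t)$ with $S$ given by~\eqref{Eq_MatrixS}. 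By the a priori $C^k$-estimates the coefficient matrix $S$ is $C^\infty$-bounded on $\R^2$; combined with $\partial_su\in L^2(\R^2)$ (which holds since $E(u)<\infty$ by Theorem~\ref{Thm_EnergyId}) and interior elliptic regularity for the uniformly elliptic operator $L$ on all of $\R^2$ (as in~\cite{Shubin:91_SpectralTheoryOnNoncompMflds}), one obtains $\eta=\partial_su\in W^{k,\hat p}(\R^2;\R^{2n})$ for every $k$ and every $\hat p\in(1,\infty)$. The statement $E_{\hat p}(u)<\infty$ for all $\hat p$ then follows because $\partial_su\in L^{\hat p}$ and $\partial_tu+v^t=-J\partial_su$ has the same $L^{\hat p}$-norm.

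For part~(ii) I would verify the four hypotheses of Lemma~\ref{Lem_ExpDecayForAnOperator} for the operator $L$. Hypothesis~(i), that $S$ is $C^\infty$-bounded, is exactly what was used above. Hypothesis~(iii), invertibility of $l_\pm$ on $W^{1,2}$, is precisely hypothesis~(H\ref{Hyp_NondegOfBrokenFlowLines}) (nondegeneracy of broken flow lines), once one identifies $l_\pm$ with $D_{\gamma_\pm}\sigma$ in the trivialization via~\eqref{Eq_MatricesSpm}. Hypothesis~(ii), convergence $S(s,t)\to S_\pm(t)$ in $C^0(\R)$ as $s\to\mp\infty$, follows from Lemma~\ref{Lem_EstimOnS}: the right-hand side of~\eqref{Eq_EstForMatrixS} is controlled by $|\partial_su(s,t)|$ and $d(u(s,t),\gamma_\pm(t))$, and both tend to zero uniformly in $t$ as $s\to\mp\infty$ — the first by Proposition~\ref{Prop_UnifConvergenceOfDsU}, and the second by the boundary condition~\eqref{Eq_BCwithoutIntegrals} (which, by the Corollary after Proposition~\ref{Prop_UnifConvergenceOfDsU}, holds in $C^1$, hence in $C^0(\R)$). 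Finally hypothesis~(iv), $\sup_t\|\partial_sS(s,t)\|\to0$ as $s\to\pm\infty$: differentiating~\eqref{Eq_MatrixS} in $s$ expresses $\partial_sS$ in terms of $\partial_su$ and its covariant derivatives up to second order (through $\nabla_s\Psi$, $\nabla_t\Psi$, $\partial_tu$, and $v^t$ and their $\xi$-derivatives), so the $W^{k,\hat p}$-bounds from part~(i), via Sobolev embedding $W^{k,\hat p}(\R^2)\hookrightarrow C^0(\R^2)$ for $k\hat p>2$, force $\partial_su$ and a fixed number of its derivatives to decay uniformly at spatial infinity, hence $\sup_t\|\partial_sS(s,t)\|\to0$.

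With all four hypotheses in place, Lemma~\ref{Lem_ExpDecayForAnOperator} applied to $\xi=\eta=\Psi^{-1}\partial_su\in L^p(\R^2;\R^{2n})$ yields constants $C,\delta>0$ with $|\eta(s,t)|\le Ce^{-\delta|s|}$, and since $\Psi$ is a unitary (in particular isometric) trivialization, $|\partial_su(s,t)|=|\eta(s,t)|\le Ce^{-\delta|s|}$, which is exactly~(ii).

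I expect the main obstacle to be the careful verification of hypothesis~(iv), namely that the full $C^0$-decay of $\partial_su$ \emph{together with enough of its derivatives} propagates to a decay estimate for $\partial_sS$ — this requires being somewhat attentive about which derivatives of $u$ and of the trivialization $\Psi$ enter~\eqref{Eq_MatrixS} and about the fact that the $W^{k,\hat p}$-bounds of part~(i) are global on $\R^2$ rather than merely local, so that Sobolev embedding on half-planes $H_\tau$ (or shifted unit balls) gives the desired uniform-in-$t$ decay as $|s|\to\infty$. Everything else is a matter of assembling results already proved in the preceding subsections.
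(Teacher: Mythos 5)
Your proof is correct and follows essentially the same route as the paper's: part~(i) via Shubin-type elliptic regularity for $C^\infty$-bounded uniformly elliptic operators, and part~(ii) by checking the four hypotheses of Lemma~\ref{Lem_ExpDecayForAnOperator} using Lemma~\ref{Lem_EstimOnS}, Proposition~\ref{Prop_UnifConvergenceOfDsU}, the nondegeneracy hypothesis~(H\ref{Hyp_NondegOfBrokenFlowLines}), and Sobolev embedding applied to the $W^{k,\hat p}$-bounds from part~(i). The only small slip is invoking Theorem~\ref{Thm_EnergyId} for $\partial_su\in L^2$ --- that theorem assumes the full boundary conditions~\eqref{Eq_PseudoholomPlaneWithHamPert_BC_t}--\eqref{Eq_PseudoholomPlaneWithHamPert_BC_s}, which are not hypotheses here, but the assumption $E_p(u)<\infty$ already gives $\partial_su\in L^p(\R^2)$ directly, which is all you need.
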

\begin{proof}
First observe that $\partial_su$ satisfies $D_u\Sigma (\partial_su)=0$ since equation~\eqref{Eq_PseudoholomPlaneWithHamPert_mod} is translation-invariant with respect to the $s$-variable. Furthermore, we claim that the operator $L$ representing $D_u\Sigma$ in the trivialization $\Psi$ is $C^\infty$--bounded. Indeed, since $u$ is $C^\infty$--bounded, so is $S(s,t)$. Obviuosly, $L$ is also uniformly elliptic and therefore statement~\textit{(i)} follows by~\cite{Shubin:91_SpectralTheoryOnNoncompMflds}.

To prove~\textit{(ii)} it is enough to prove that the matrix-valued function $S(s,t)$ defined by~\eqref{Eq_MatrixS} satisfies the hypotheses of  Lemma~\ref{Lem_ExpDecayForAnOperator}. We have already showed that $S(s,t)$ is $C^\infty$--bounded. From  Lemma~\ref{Lem_EstimOnS} and Proposition~\ref{Prop_UnifConvergenceOfDsU} we obtain that hypothesis~\textit{(ii)} of Lemma~\ref{Lem_ExpDecayForAnOperator} is satisfied.  Furthermore, by~\textit{(i)} and the Sobolev embedding theorems any solution $u$ of~\eqref{Eq_PseudoholomPlaneWithHamPert_mod} with $E_p(u)<\infty$ satisfies
\begin{equation*}
    \lim\limits_{s\to\pm\infty}\sup\limits_{t}\Bigl ( |\cd s \partial_su| +  |\cd t \partial_su| \Bigr )=0
\qquad\text{and}\qquad \sup\limits_{\mathbb R^2}|\cd t \partial_t u|<\infty.
  \end{equation*}
This implies  that hypothesis~\textit{(iii)} of Lemma~\ref{Lem_ExpDecayForAnOperator} is also satisfied. Finally, hypothesis \textit{(iv)} is satisfied, since $l_\pm$ represents $D_{\gamma_\pm}\sigma$ in the chosen trivialization.  
\end{proof}

\begin{cor}\label{Cor_FiniteEnergyImplyBCs}
  Let $u$ be a solution of~\eqref{Eq_PseudoholomPlaneWithHamPert_mod},\eqref{Eq_BCwithoutIntegrals} with  $E_p(u)<\infty$ for some $p\in [2,\infty)$. Then
\[
\lim_{t\to\pm\infty}\int_{-\infty}^{+\infty} |\partial_s u(s,t)|\, ds=0,\qquad
\lim_{s\to\pm\infty}\int_{a}^b |\partial_su(s,t)|\, dt=0,
\]
i.e., $u$ is a solution of~\eqref{Eq_PseudoholomPlaneWithHamPert_mod}--\eqref{Eq_PseudoholomPlaneWithHamPert_BC_s}.
\end{cor}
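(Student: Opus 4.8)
The plan is to bootstrap from the exponential decay in the $s$-variable (Theorem~\ref{Thm_ExpDecay}) to the two integral conditions~\eqref{Eq_PseudoholomPlaneWithHamPert_BC_t} and~\eqref{Eq_PseudoholomPlaneWithHamPert_BC_s}. The second condition is immediate: by Theorem~\ref{Thm_ExpDecay}(ii) we have $|\partial_s u(s,t)|\le Ce^{-\delta|s|}$ uniformly in $t$, so for any fixed $a\le b$,
\[
\int_a^b |\partial_s u(s,t)|\, dt\le (b-a)\, Ce^{-\delta|s|}\xrightarrow{\ s\to\pm\infty\ }0.
\]
So the content is really the first condition, namely that $\int_{-\infty}^{+\infty}|\partial_s u(s,t)|\, ds$ is finite for each $t$ and tends to $0$ as $t\to\pm\infty$.

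First I would establish convergence of the integral for fixed $t$: this is exactly the trace-type estimate already quoted in the excerpt after~\eqref{Eq_PseudoholomPlaneWithHamPert_BC_s}, namely $\|\partial_s u(\cdot,\tau)\|_{W^{0,1}(\R)}\le C_{k,p}\|\partial_s u\|_{W^{k,p}(H_\tau)}$ with $H_\tau=\{t\ge\tau\}$, valid once $\partial_s u\in W^{k,p}$ with $k>\max\{1/p, 2/p-1\}$. Theorem~\ref{Thm_ExpDecay}(i) gives precisely $\partial_s u\in W^{k,\hat p}(\R^2; u^*TM)$ for all $k$ and all $\hat p\in(1,\infty)$, so the hypothesis of that trace estimate is met; hence $\int_{\R}|\partial_s u(s,t)|\, ds<\infty$ for every $t$, and moreover the bound is controlled by $\|\partial_s u\|_{W^{k,\hat p}(H_t)}$.

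The decay as $t\to+\infty$ then follows because $\|\partial_s u\|_{W^{k,\hat p}(H_\tau)}\to 0$ as $\tau\to+\infty$: indeed $\partial_s u\in W^{k,\hat p}(\R^2)$ means $\int_{\R^2}\sum_{|\alpha|\le k}|\partial^\alpha(\partial_s u)|^{\hat p}<\infty$, so the tail integral over $H_\tau$ vanishes in the limit by dominated convergence (or just the definition of a convergent improper integral). Feeding this into the trace estimate gives $\|\partial_s u(\cdot,\tau)\|_{W^{0,1}(\R)}\to 0$, which is the first claimed limit; the case $t\to-\infty$ is symmetric, working with $\{t\le\tau\}$ instead. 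Combining this with the $C^0$-estimates (Corollary~\ref{Cor_AprioriEstimForCkNorms}) and the already-verified boundary behaviour $u(s,t)\to m_\mp$ and $u(s,t)\to\gamma_\mp(t)$ from~\eqref{Eq_BCwithoutIntegrals}, we conclude that $u$ solves the full system~\eqref{Eq_PseudoholomPlaneWithHamPert_mod}--\eqref{Eq_PseudoholomPlaneWithHamPert_BC_s}.

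There is essentially no obstacle here: the proof is a short deduction, and the only thing to be slightly careful about is that the trace estimate and the $W^{k,\hat p}$-regularity are stated for the vector-valued function $\partial_s u$ in a trivialization (or intrinsically along $u^*TM$), so one should note that the pointwise norm $|\partial_s u|$ used in~\eqref{Eq_PseudoholomPlaneWithHamPert_BC_t}--\eqref{Eq_PseudoholomPlaneWithHamPert_BC_s} is the one induced by $g$, which is comparable to the Euclidean norm in the unitary trivialization $\Psi$; this is harmless. The ``main obstacle'' was really already dispatched in Theorem~\ref{Thm_ExpDecay}, and this corollary is just its book-keeping consequence.
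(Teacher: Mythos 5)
Your proof is correct and follows essentially the same route as the paper's, which simply invokes the trace/Sobolev embedding estimate $\|\partial_s u(\cdot,\tau)\|_{W^{0,1}(\R)}\le C_{k,p}\|\partial_s u\|_{W^{k,p}(H_\tau)}$ together with the $W^{k,\hat p}$-regularity from Theorem~\ref{Thm_ExpDecay}(i). The only (harmless) variation is that you dispose of the $s\to\pm\infty$ condition directly via the pointwise exponential decay of Theorem~\ref{Thm_ExpDecay}(ii), whereas the paper implicitly uses the same tail-of-a-convergent-$W^{k,p}$-norm argument in both variables; either reading is fine.
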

\begin{proof}
  The statement follows from the Sobolev embedding theorems as explained on p.~\pageref{Eq_AuxEstimForTrace}.  

\end{proof}

\begin{cor}\label{Cor_ApriooriWkpBounds}
  For any solution $u$ of~\eqref{Eq_PseudoholomPlaneWithHamPert_mod}--\eqref{Eq_PseudoholomPlaneWithHamPert_BC_s} $\partial_su\in W^{k,p}(\R^2; u^*TM)$ for all $k$ and all $p\in [1,\infty)$. Moreover, for each $k\ge 0$ and $p\ge 1$ there exists a constant $C_{k,p}$ independent of $u$ such that
\[
\| \partial_su \|_{W^{k,p}}\le C_{k,p}.
\]
\end{cor}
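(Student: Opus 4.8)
The plan is to reduce everything to an exponential decay estimate for $\partial_su$, made quantitative and uniform in $u$, in both variables $s$ and $t$. First I would record that, by the Energy identity (Theorem~\ref{Thm_EnergyId}), any solution $u$ of~\eqref{Eq_PseudoholomPlaneWithHamPert_mod}--\eqref{Eq_PseudoholomPlaneWithHamPert_BC_s} satisfies $E_2(u)=E(u)=\mathscr F(\gamma_+)-\mathscr F(\gamma_-)<\infty$; since by~(H\ref{Hyp_NondegOfBrokenFlowLines}) the broken flow lines are nondegenerate, hence isolated, and (by the compactness established in Appendix~\ref{App_OnThePertAntigradFlowLines}) they form a compact set, there are only finitely many of them, so in fact $E_2(u)\le E_0$ for a constant $E_0=E_0(M,J,f)$. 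Hence Theorem~\ref{Thm_ExpDecay} applies with $p=2$, giving $\partial_su\in W^{k,\hat p}(\R^2;u^*TM)$ for all $k$ and all $\hat p\in(1,\infty)$, together with the estimate $|\partial_su(s,t)|\le Ce^{-\delta|s|}$. Moreover, by Corollary~\ref{Cor_AprioriEstimForCkNorms} the norms $\|u\|_{C^k(\R^2)}$, and therefore (via~\eqref{Eq_MatrixS}) the $C^\infty$-bounds on the coefficient matrix $S$ of the uniformly elliptic operator $L=D_u\Sigma$, are independent of $u$; combined with $E_2(u)\le E_0$, the global elliptic estimates for $L(\partial_su)=0$ (cf.~\cite{Shubin:91_SpectralTheoryOnNoncompMflds}) yield bounds on $\|\partial_su\|_{W^{k,\hat p}(\R^2)}$ depending only on $k,\hat p,M,J,f$. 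This already proves the corollary for every $p\in(1,\infty)$.

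For $p=1$ one more ingredient is needed: integrability of $\partial_su$ and its derivatives as $t\to\pm\infty$, which Theorem~\ref{Thm_ExpDecay} does not directly provide. Here I would run the asymptotic argument of Theorem~\ref{Thm_ExpDecay} (equivalently, of Lemma~\ref{Lem_ExpDecayForAnOperator}) once more, with the roles of $s$ and $t$ interchanged. For $|t|\ge\nu^{-1}+1$ one has $\theta_\nu(t)=0$, so $v^t=v_0$ and~\eqref{Eq_PseudoholomPlaneWithHamPert_mod} becomes the autonomous pseudoholomorphic map equation with the time-independent Hamiltonian $\im f$; since $u(s,t)\to m_\mp$ as $t\to\mp\infty$ and $m_\mp$ is a nondegenerate critical point of $f$ — so that the Hessian of $\re f$ at $m_\mp$, which governs the relevant asymptotic operator, is invertible — the standard asymptotic analysis near a nondegenerate critical point (cf.~\cite{RobbinSalamon:01_AsymptBehOfHolStrips}) together with the convexity estimate of Lemma~\ref{Lem_ExpDecayForAnOperator} produces positive constants $C',\delta'$, depending only on $M,J,f$ (there are finitely many critical points), such that $|\partial_su(s,t)|\le C'e^{-\delta'|t|}$ for all $(s,t)\in\R^2$.

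Finally I would combine the two one-sided bounds into $|\partial_su(s,t)|\le\min\{Ce^{-\delta|s|},\,C'e^{-\delta'|t|}\}\le C''e^{-\delta''(|s|+|t|)}$ with $\delta''=\tfrac12\min\{\delta,\delta'\}$, and then bootstrap. The equations obtained by differentiating $L(\partial_su)=0$ are uniformly elliptic with $C^\infty$-bounded coefficients, so the interior mean value estimates used at the end of the proof of Lemma~\ref{Lem_ExpDecayForAnOperator} propagate this decay to all derivatives: $|\nabla^j\partial_su(s,t)|\le C_je^{-\delta_j(|s|+|t|)}$ with $C_j,\delta_j$ independent of $u$. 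Consequently each $\nabla^j\partial_su$ lies in $L^1(\R^2)\cap L^\infty(\R^2)$ with $\|\nabla^j\partial_su\|_{L^1(\R^2)}\le 4C_j\delta_j^{-2}$, so $\partial_su\in W^{k,1}(\R^2)$ with a bound independent of $u$; since $L^1\cap L^\infty\subset L^p$ for every $p\in[1,\infty]$, the same holds in $W^{k,p}(\R^2)$ for all $p\in[1,\infty)$, with constants $C_{k,p}$ depending only on $k,p,M,J,f$. The step I expect to be the main obstacle is precisely the decay as $t\to\pm\infty$: one must verify that the linearization of~\eqref{Eq_PseudoholomPlaneWithHamPert_mod} along the $t$-ends is asymptotic to an invertible operator — which it is, because the critical points $m_\pm$ are nondegenerate — and that every constant arising in this asymptotic analysis can be chosen uniformly over the finite set of limiting profiles.
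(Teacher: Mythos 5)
Your proof is correct, and it takes a genuinely different — and, for small $p$, more careful — route than the paper's. The paper establishes a uniform $L^2$ bound on $\partial_su$ from the energy identity (Theorem~\ref{Thm_EnergyId}) and then invokes the global elliptic a priori estimate $\|\xi\|_{W^{k,2}}\le C_k\bigl(\|L\xi\|_{W^{k-1,2}}+\|\xi\|_{L^2}\bigr)$ for the $C^\infty$-bounded, uniformly elliptic operator $L$ representing $D_u\Sigma$ (the bounds on the coefficient matrix $S$ are independent of $u$ by Corollary~\ref{Cor_AprioriEstimForCkNorms}) to obtain a uniform $W^{k,2}$ bound on $\partial_su$; the general $W^{k,p}$ case is then asserted to follow ``by the Sobolev embedding theorems.'' For $p\ge 2$ this is immediate from $W^{k+1,2}(\R^2)\hookrightarrow W^{k,p}(\R^2)$, but for $p\in[1,2)$ Sobolev embeddings on the unbounded domain $\R^2$ run the wrong way ($W^{k,2}(\R^2)\not\hookrightarrow L^1(\R^2)$, and even $L^2\cap L^\infty\not\subset L^1$), so some additional integrability input is genuinely required — this is precisely what your argument supplies. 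Your idea of establishing exponential decay in $t$ in addition to the decay in $s$ from Theorem~\ref{Thm_ExpDecay} — using that for $|t|\ge\nu^{-1}+1$ the perturbation is absent and the asymptotic operator $\tfrac{d}{ds}+H_\pm$, governed by the nondegenerate Hessian of $\re f$ at $m_\pm$, is invertible (this is exactly Step~2 of the Fredholm lemma preceding Theorem~\ref{Thm_LinearizationIsFredholm}) — yields $|\nabla^j\partial_su|\lesssim e^{-\delta_j(|s|+|t|)}$ with constants uniform in $u$, hence uniform $L^1\cap L^\infty$ bounds on all derivatives and therefore uniform $W^{k,p}$ bounds for every $p\in[1,\infty)$. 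The cost is that you must prove a $t$-direction (half-plane) analogue of Lemma~\ref{Lem_ExpDecayForAnOperator}, which the paper never states but which follows by the same convexity argument once invertibility of the asymptotic operator in $s$ is noted; you correctly flag this as the crux. In short, the paper's argument is shorter and complete for $p\ge 2$, while your two-variable decay route is the one that actually delivers the case $p=1$ cleanly.
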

\begin{proof}
First observe that by Theorem~\ref{Thm_EnergyId}  $\partial_su$ belongs to $L^2(\R^2; u^*TM)$ and $\|\partial_su\|_{L^2}$  is bounded by a constant independent of $u$.

  As already mentioned in the proof of Theorem~\ref{Thm_ExpDecay} the matrix-valued function $S(s,t)$ is $C^\infty$--bounded. Moreover, it follows from Corollary~\ref{Cor_AprioriEstimForCkNorms}  that  the corresponding bounds can be chosen to be independent of $u$. Furthermore, the operator $L$ is uniformly elliptic with the corresponding constant also independent of $u$. For such an operator of order $1$  we have the a priori estimate
\[
\| \xi \|_{W^{k,2}}\le C_{k}\bigl (  \| L\xi\|_{W^{k-1,2}} +\| \xi\|_{L^2}  \bigr),
\qquad k\ge 1,
\]
where the constant $C_k$ does not depend on  $u$ (this is seen by examining explicit formulae for a parametrix of $L$). This implies the statement of the corollary for all $k\ge 1$ and $p=2$. This special case implies in turn the statement of the corollary in general by the Sobolev embedding theorems.  
\end{proof}


\subsection{Compactness}

\begin{prop}\label{Prop_FinEnergyImpliesLimits}
  Let $u$ be a solution of~\eqref{Eq_PseudoholomPlaneWithHamPert_mod} and~\eqref{Eq_PseudoholomPlaneWithHamPert_BC_t} with $E_p(u)<\infty$ for some $p\in [2,\infty)$. Then there exist solutions $\gamma_\pm$ of problem~\eqref{Eq_BrokenFlowLine} such that
  \begin{equation*}
    \lim\limits_{s\to\pm\infty} u(s,t)= \gamma_\mp(t)\qquad\text{and}\qquad \lim_{s\to\pm\infty}\int_{a}^b |\partial_su(s,t)|\, dt=0,
  \end{equation*}
where the limits on the left hand side are understood in the $C^0(\R)$--topology and $a\le b$ are arbitrary.
\end{prop}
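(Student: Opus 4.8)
The plan is to run the standard compactness argument for finite--energy solutions of Floer--type equations: use the a priori estimates already established to extract subsequential limits of $u(s,\cdot)$ at the ends $s\to\pm\infty$, identify these as broken flow lines, and then upgrade a subsequential limit to a genuine one using the nondegeneracy hypothesis~(H\ref{Hyp_NondegOfBrokenFlowLines}). I discuss $s\to+\infty$ (producing $\gamma_-$); the case $s\to-\infty$ (producing $\gamma_+$) is identical. First, since $u$ solves~\eqref{Eq_PseudoholomPlaneWithHamPert_mod} with $E_p(u)<\infty$, Proposition~\ref{Prop_UnifConvergenceOfDsU} gives $\sup_{\R^2}(|\partial_s u|+|\partial_t u|)<\infty$ and $\partial_s u(s,\cdot)\to 0$ in $C^0(\R)$ as $s\to\pm\infty$, and then Proposition~\ref{Prop_BoundInC1ImpliesBoundInCk} yields $\|u\|_{C^k(\R^2)}\le c_k$ for all $k$; in particular $u(\R^2)$ is bounded, and by the maximum--principle argument against a plurisubharmonic defining function of $\partial M_0$ used in Step~2 of the proof of Theorem~\ref{Thm_AprioriC0Estimates} we may assume $u(\R^2)\subset\hat K$ for a fixed compact $\hat K\subset M\setminus\partial M$. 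Thus $\{u(s,\cdot):s\in\R\}$ is uniformly bounded and equicontinuous on compact $t$--intervals.

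Next I claim a uniform decay at the spatial ends: there is a modulus $\epsilon(T)\searrow 0$ with $\dist(u(s,t),m_\mp)\le\epsilon(|t|)$ for all $s\in\R$ and $|t|\ge T$. In a chart about $m_\mp$ in which $f$ is quadratic and $J$ integrable (property~(P\ref{Pty_Properness})) equation~\eqref{Eq_PseudoholomPlaneWithHamPert_mod} takes a normal form, and the $C^0$--maximum principle of the type used in Step~\ref{Step_BoundOnFU_Thm_AprioriC0Estimates} of the proof of Theorem~\ref{Thm_AprioriC0Estimates}, together with the $C^1$--bound above and the hypothesis $\lim_{t\to\pm\infty}u(s,t)=m_\mp$, confines $u$ to ever--smaller neighbourhoods of $m_\mp$ as $|t|\to\infty$, uniformly in $s$. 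Granting this, $\{u(s,\cdot):s\in\R\}$ is precompact in $C^0(\R;\hat K)$ and $s\mapsto u(s,\cdot)$ is a continuous curve in this compact metric space. Given any $s_j\to+\infty$, Arzel\`a--Ascoli yields a subsequence converging in $C^\infty_{\mathrm{loc}}(\R;M)$, which by the uniform decay in fact converges in $C^0(\R;M)$ to some $\gamma$ with $\gamma(t)\to m_\mp$ as $t\to\pm\infty$; rewriting~\eqref{Eq_PseudoholomPlaneWithHamPert_mod} as $\partial_t u+v^t(u)=J\,\partial_s u$ and using $\partial_s u(s_j,\cdot)\to 0$ in $C^0(\R)$, the limit satisfies $\dot\gamma+v^t(\gamma)=0$, i.e. $\gamma\in\Gamma_\nu(m_-,m_+)$.

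To see that the limit does not depend on the subsequence, recall that by~(H\ref{Hyp_NondegOfBrokenFlowLines}) the operator $D_\gamma\sigma\colon W^{1,2}\to L^2$ is invertible for every $\gamma\in\Gamma_\nu(m_-,m_+)$; since a $C^0(\R)$--limit of broken flow lines is in fact a $W^{1,2}(\R)$--limit (the flow--line equation is translation--invariant for $|t|$ large and forces exponential decay to the nondegenerate critical points $m_\pm$, while on compact intervals $C^\infty_{\mathrm{loc}}$--control holds as above), the implicit function theorem shows that $\Gamma_\nu(m_-,m_+)$ is discrete in the $C^0(\R)$--topology. Now the set of subsequential $C^0(\R)$--limits of $u(s,\cdot)$ as $s\to+\infty$ is $\bigcap_{S}\overline{\{u(s,\cdot):s\ge S\}}$, a decreasing intersection of non--empty, compact, connected subsets of $C^0(\R;\hat K)$, hence itself non--empty, compact and connected; being contained in the discrete set $\Gamma_\nu(m_-,m_+)$, it reduces to a single point $\gamma_-$, so $u(s,\cdot)\to\gamma_-$ in $C^0(\R;M)$ as $s\to+\infty$, and likewise $u(s,\cdot)\to\gamma_+$ as $s\to-\infty$. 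Finally $u$ now satisfies~\eqref{Eq_BCwithoutIntegrals}, so since $E_p(u)<\infty$ Corollary~\ref{Cor_FiniteEnergyImplyBCs} --- or, directly, the exponential decay $|\partial_s u(s,t)|\le Ce^{-\delta|s|}$ of Theorem~\ref{Thm_ExpDecay} --- gives $\lim_{s\to\pm\infty}\int_a^b|\partial_s u(s,t)|\,dt=0$, which completes the proof.

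The hard part is the uniform decay claimed in the second step: controlling $u(s,\cdot)$ near $t=\pm\infty$ \emph{uniformly in $s$}. This is exactly what guarantees both that subsequential limits land in $\Gamma_\nu(m_-,m_+)$ --- rather than being flow lines connecting other pairs of critical points --- and that the limits in~\eqref{Eq_PseudoholomPlaneWithHamPert_mod} and~\eqref{Eq_BCwithoutIntegrals} hold in the $C^0(\R)$--topology rather than merely locally. Once this and the (standard) discreteness of $\Gamma_\nu(m_-,m_+)$ are in place, the remaining steps are a routine assembly of results proved earlier in this section.
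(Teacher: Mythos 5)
Your overall strategy matches the paper's: use the a priori bounds and Proposition~\ref{Prop_UnifConvergenceOfDsU} to get equicontinuity, extract a subsequential $C^0(\R)$--limit of $u(s,\cdot)$ via Arzel\`a--Ascoli, identify the limit as a broken flow line, upgrade to a genuine limit via discreteness of $\Gamma_\nu(m_-,m_+)$, and then read off the integral condition. Your connectedness argument (decreasing intersection of non-empty compact connected sets of subsequential limits inside a discrete set) is a slightly different packaging of the paper's Step~\ref{Step_DistanceToAntigradFlowLines}, which instead shows directly that $u(s,\cdot)$ is eventually $\epsilon$--close to the discrete set; both are fine.

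The one substantive issue is with what you identify as ``the hard part.'' You devote a paragraph to \emph{proving} uniform-in-$s$ decay of $u(s,t)$ toward $m_\mp$ as $|t|\to\infty$, sketching a maximum-principle argument in a quadratic normal form near $m_\pm$. But this is not something to be proven: the paper states explicitly, immediately after~\eqref{Eq_PseudoholomPlaneWithHamPert_BC_s}, that ``the limits appearing on the left hand side of~\eqref{Eq_PseudoholomPlaneWithHamPert_BC_t} and~\eqref{Eq_PseudoholomPlaneWithHamPert_BC_s} are understood in the $C^0(\R)$--topology.'' Since your Proposition assumes $u$ solves~\eqref{Eq_PseudoholomPlaneWithHamPert_BC_t}, the convergence $u(s,t)\to m_\mp$ as $t\to\pm\infty$ is already uniform in $s$ by hypothesis --- this is exactly condition~(iii) in the paper's Step~\ref{Step_ConvergentSubseq}. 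So the maximum-principle excursion is not needed, and it is good that it is not: the maximum-principle argument you point to in Step~\ref{Step_BoundOnFU_Thm_AprioriC0Estimates} of Theorem~\ref{Thm_AprioriC0Estimates} proves a global bound on $|f\circ u|$, not a localization near $m_\pm$, and would not transfer in the way you suggest without further work. Once one reads the boundary condition as intended, your proof is correct and complete, and the remaining assembly of Propositions~\ref{Prop_UnifConvergenceOfDsU} and~\ref{Prop_BoundInC1ImpliesBoundInCk}, Theorem~\ref{Thm_AprioriC0Estimates}, and the discreteness of $\Gamma_\nu(m_-,m_+)$ is exactly as in the paper.
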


\begin{proof}
  
 The proof consists of the following three steps.
\setcounter{thestep}{0}

 \begin{step}\label{Step_ConvergentSubseq}
   Let $\b_n\in C^1(\R;  M)$ be an arbitrary sequence of curves such that the following holds:
   \begin{itemize}
   \item[(i)] There exists a compact subset $\hat K\subset M$ containing the images of all curves $\beta_n$;
   \item[(ii)] $\lim\limits_{n\to\infty}\sup\limits_{t\in\mathbb R}\,\bigl | \dot\b_n +v^t  \bigr |=0$;
   \item[(iii)] $\b_n(t)\to m_\mp\quad\text{as}\ t\to\pm\infty\qquad\text{uniformly with respect to}\ n$. 
   \end{itemize}
Then there exists a subsequence $\b_{n_k}$ converging in $C^0(\mathbb R; M)$ to a solution of~\eqref{Eq_BrokenFlowLine}.
 \end{step}

Recall that the function $\rho=|v_0|^2=|v^t|^2$ is bounded. Hence, it follows from~\textit{(ii)} that the sequence $\beta_n$ is equicontinuous. By the Arzela--Ascoli theorem there exists a subsequence  $\b_{n_k}$ convergent on any finite interval to some $\gamma\in C^0(\mathbb R; M)$.  Then $\gamma\in C^1(\mathbb R; M)$ and $\dot\gamma+ v^t=0$.

  Furthermore, by~\textit{(iii)} for any $\e>0$ there exits $T_\e>0$ such that for all $t\ge T_\e$ and all $n_k$ we have $d(\b_{n_k}(t), m_-)\le\e$. Then $d(\gamma(t), m_-)=\lim_{k\to\infty}  d(\b_{n_k}(t), m_-)\le\e$ provided $t\ge T_\e$. Hence $\lim_{t\to +\infty}\gamma(t)=m_-$ and similarly   $\lim_{t\to -\infty}\gamma(t)=m_+$, i.e., $\gamma$ is a solution of~\eqref{Eq_BrokenFlowLine}. Then 
 \begin{equation*}
   \sup\limits_{t\in\mathbb R}d(\b_{n_k}(t),\gamma(t))\le
       \max\Bigl\{ \sup\limits_{t\in [-T_\e, T_\e]}d(\b_{n_k}(t),\gamma(t)),\ 2\e\Bigr\}\le 2\e
 \end{equation*}
provided $n_k$ is large enough. This finishes the proof of Step~\ref{Step_ConvergentSubseq}.

\begin{step}\label{Step_DistanceToAntigradFlowLines}
Let $u$ be a solution of~\eqref{Eq_PseudoholomPlaneWithHamPert_mod} and~\eqref{Eq_PseudoholomPlaneWithHamPert_BC_t} with $E_p(u)<\infty$ for some $p\in [2,\infty)$. Then for any $\e>0$ there exists $\sigma_\e>0$ such that
  \begin{equation*}
    \inf\limits_{\gamma\in\Gamma(m_-;m_+)}\  \sup\limits_{t\in\mathbb R}
d\bigl (u(s,t), \gamma(t)\bigr )\le\e\qquad\text{provided}\ |s|\ge\sigma_\e.
  \end{equation*}
\end{step}

Assume the converse. Then there exists a sequence $s_n\to +\infty$ such that for $\b_n(t)=u(s_n,t)$ we have  
\begin{equation}\label{Eq_AuxIneqA1}
  \sup\limits_{t\in\mathbb R}d\bigl (\b_n(t), \gamma(t)\bigr )\ge\e_0\qquad\text{for all }\ \gamma\in\Gamma (m_-;m_+).
\end{equation}
By Theorem~\ref{Thm_AprioriC0Estimates} and Proposition~\ref{Prop_UnifConvergenceOfDsU} the sequence $\b_n$ satisfies the hypothesis of Step~\ref{Step_ConvergentSubseq} and hence has a convergent subsequence. But this contradicts inequality~\eqref{Eq_AuxIneqA1}.

\begin{step}
  We prove the proposition.
\end{step}

Let $u$ satisfy the hypotheses of the Proposition. Since $\Gamma (m_-;m_+)$ is discrete,  by  Step~\ref{Step_DistanceToAntigradFlowLines} the family $u(s,\cdot)$ converges to some $\gamma_\pm\in\Gamma(m_-;m_+)$ in $C^0(\R)$ as $s\to\mp\infty$. The rest follows immediately from Proposition~\ref{Prop_UnifConvergenceOfDsU}.
\end{proof}

\begin{prop}\label{Prop_AprioriBoundsForTLarge}
  For any $\e>0$ there exists $T>0$ such that for all solutions $u$ of~\eqref{Eq_PseudoholomPlaneWithHamPert_mod}--\eqref{Eq_PseudoholomPlaneWithHamPert_BC_s} the following holds:
\begin{align*}
  (i) & \int\limits_{\R\times [T,+\infty)} |\partial_su|^2\, dsdt <\e; & \qquad\qquad
(ii) & \int\limits_\R |\partial_s u(s,t)|\, ds<\e\  \text{ for all } t\ge T;\\
 (iii) & \int\limits_{\R\times [-\infty, -T]} |\partial_su|^2\, dsdt <\e; & \qquad\qquad
(iv) & \int\limits_\R |\partial_s u(s,t)|\, ds<\e\  \text{ for all } t\le -T.
\end{align*}
\end{prop}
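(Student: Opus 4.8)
The plan is to reduce everything to the energy and $W^{k,p}$ bounds established in Corollary~\ref{Cor_ApriooriWkpBounds} together with the uniform $C^0$-estimate of Theorem~\ref{Thm_AprioriC0Estimates}. First I would observe that by Theorem~\ref{Thm_EnergyId} the total energy $E(u)=\int_{\R^2}|\partial_su|^2$ equals $\mathscr F(\gamma_+)-\mathscr F(\gamma_-)$, and since there are only finitely many broken flow lines connecting $m_-$ and $m_+$, this quantity is bounded by a constant depending only on $(f,J,M)$ but not on $u$. In particular the family of energies is uniformly bounded; combined with Corollary~\ref{Cor_ApriooriWkpBounds} we get $\|\partial_su\|_{W^{k,p}(\R^2)}\le C_{k,p}$ uniformly in $u$ for every $k$ and $p$. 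The content of the proposition is then a uniform tail estimate: the $L^2$-energy in the half-planes $\{t\ge T\}$ and $\{t\le -T\}$, and the $W^{0,1}$-norm of $\partial_su(\cdot,t)$ for $|t|$ large, can be made uniformly small by taking $T$ large.

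The main idea is to argue by contradiction using the compactness afforded by the uniform $W^{k,p}$-bounds. Suppose (i) fails: then there are $\e_0>0$, solutions $u_n$, and $T_n\to\infty$ with $\int_{\R\times[T_n,\infty)}|\partial_su_n|^2\ge\e_0$. Because the energies $E(u_n)$ are uniformly bounded by some $E_0$, one can only accommodate finitely many ``lumps'' of energy $\ge\e_0$; more precisely, partitioning $\R\times[0,\infty)$ into horizontal strips $\R\times[j,j+1)$, the uniform energy bound forces, for each fixed $T$, $\sup_n\int_{\R\times[T,\infty)}|\partial_su_n|^2\le E_0$, but to get it small we need to rule out energy escaping to $t=+\infty$. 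The clean way is: by Theorem~\ref{Thm_ExpDecay} each individual $u$ has $|\partial_su(s,t)|\le Ce^{-\delta|s|}$, but the constants there may depend on $u$, so this is not directly enough. Instead I would use Proposition~\ref{Prop_FinEnergyImpliesLimits} together with Theorem~\ref{Thm_AprioriC0Estimates}: translating $u_n$ in $t$ by $-T_n$ (note equations~\eqref{Eq_PseudoholomPlaneWithHamPert_mod} become autonomous, i.e. $\theta_\nu\equiv0$, for $|t|\ge\nu^{-1}+1$, so the translated equation converges to the constant-coefficient limit), the $C^0$- and $W^{k,p}$-bounds give a subsequence converging in $C^\infty_{loc}$ to a finite-energy solution $u_\infty$ of the limiting (autonomous) equation on all of $\R^2$ with $\lim_{t\to\pm\infty}u_\infty(s,t)=m_\mp$; such a solution, by the energy identity argument applied to the autonomous limit, has zero energy, hence $\partial_su_\infty\equiv0$. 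This contradicts $\liminf_n\int_{\R\times[0,1]}|\partial_su_n(s,t+T_n)|^2\,dsdt\ge\e_0$, which holds after passing to a further subsequence because the energy $\ge\e_0$ in $\{t\ge T_n\}$ must concentrate in some fixed-width strip (by the exponential decay in $s$ — uniform after translation, from the $W^{k,p}$-bounds via Sobolev — the energy cannot spread in the $s$-direction). This proves (i), and (iii) is identical with $t\to-\infty$, $m_\mp$ swapped.

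For (ii) and (iv) I would deduce them from (i) and (iii) via the trace/Sobolev estimate already recorded in the excerpt (the displayed inequality $\|\partial_su(\cdot,\tau)\|_{W^{0,1}(\R)}\le C_{k,p}\|\partial_su\|_{W^{k,p}(H_\tau)}$ with $H_\tau=\{t\ge\tau\}$, valid since $\partial_su\in W^{k,p}(\R^2;u^*TM)$ uniformly by Corollary~\ref{Cor_ApriooriWkpBounds}). Concretely, choosing $k,p$ with $k>\max\{1/p,\,2/p-1\}$, we have for $\tau\ge T$
\[
\int_\R|\partial_su(s,\tau)|\,ds=\|\partial_su(\cdot,\tau)\|_{W^{0,1}(\R)}\le C_{k,p}\|\partial_su\|_{W^{k,p}(H_\tau)}\le C_{k,p}\|\partial_su\|_{W^{k,p}(H_T)},
\]
and the right-hand side tends to $0$ as $T\to\infty$ uniformly in $u$ because $\|\partial_su\|_{W^{k,p}(\R^2)}$ is uniformly bounded and the $W^{k,p}$-mass over $H_T$ is controlled, again by the contradiction-compactness argument above applied at the level of $W^{k,p}$-norms rather than $L^2$ (the same $C^\infty_{loc}$-limit is the zero solution, so $\|\partial_su_n\|_{W^{k,p}(\R\times[T_n,T_n+R])}\to0$ for each fixed $R$, and the $s$-exponential decay controls the contribution of $|s|$ large uniformly). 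This gives (ii); (iv) follows symmetrically.

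The main obstacle I expect is making the concentration-compactness step fully rigorous: one must show that the energy $\ge\e_0$ sitting in $\{t\ge T_n\}$ cannot simultaneously escape to $s=\pm\infty$ and to $t=+\infty$ in a way that evades the $C^\infty_{loc}$-limit. This is handled by first using the uniform $s$-exponential decay (which follows from the uniform $W^{k,p}(\R^2)$-bounds and one-dimensional Sobolev embedding applied slice-wise, giving $\sup_t|\partial_su(s,t)|\le C e^{-\delta|s|}$ with $C,\delta$ \emph{independent of} $u$ — here one needs the constant-coefficient structure of the linearization for $|t|$ large, exactly as in Lemma~\ref{Lem_ExpDecayForAnOperator}, applied with uniform constants) to confine almost all the energy to a strip $|s|\le S_0$; then the remaining energy lives in a set $[-S_0,S_0]\times[T_n,\infty)$ on which, after $t$-translation, the $C^\infty_{loc}$-convergence to the zero solution applies directly and yields the contradiction. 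Once this uniform $s$-decay is in hand, the rest of the argument is routine.
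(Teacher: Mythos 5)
Your strategy — contradiction via concentration-compactness — is genuinely different from the paper's, which gives a short \emph{direct} proof of (i) by repeating the Stokes computation from the energy identity on the half-plane $\{t\ge\tau\}$: this yields the exact formula
\begin{equation*}
\int_{\R\times[\tau,+\infty)}|\partial_su|^2\,ds\,dt \;=\; I(\tau)\;-\;\int_\R\lambda\bigl(\partial_su(s,\tau)\bigr)\,ds,
\qquad
I(\tau)=\int_\tau^{+\infty}\Bigl(\l(\dot\gamma_+)-\l(\dot\gamma_-)+\im e^{-i\theta(t)}(f\comp\gamma_+-f\comp\gamma_-)\Bigr)\,dt,
\end{equation*}
where $I(\tau)\to0$ uniformly in $u$ because there are only finitely many limit curves $\gamma_\pm$, and the boundary term is controlled by a pigeonhole argument using the uniform bound $\int_{\R^2}|\partial_su|<C_{0,1}$ from Corollary~\ref{Cor_ApriooriWkpBounds}: for any $T$ there is $\tau\in[T,T+\e^{-1}C_{0,1}]$ with $\int_\R|\partial_su(s,\tau)|\,ds<\e$. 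Item (ii) then follows from (i) by re-running the elliptic a priori estimate of Corollary~\ref{Cor_ApriooriWkpBounds} on the half-plane $\R\times[T',\infty)$, not by a further compactness argument.

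As written, your compactness argument has gaps that I don't think are merely technical. First, the claim that the $C^\infty_{loc}$-limit $u_\infty$ of $u_n(\cdot,\cdot+T_n)$ satisfies $\lim_{t\to\pm\infty}u_\infty(s,t)=m_\mp$ is not justified: the asymptotics $u_n(s,t')\to m_\mp$ as $t'\to\pm\infty$ hold for each fixed $n$ but are not uniform in $n$ (indeed, uniformity in $n$ of precisely this convergence is the content of item (ii), so assuming it is circular). Second, the energy $\ge\e_0$ sitting in $\{t\ge T_n\}$ need not concentrate near $t=T_n$ — it could be spread thinly over an arbitrarily long $t$-interval or escape to $t\to+\infty$ even after recentring at $T_n$ — and the justification you offer (uniform exponential decay in $s$) controls escape in $s$ but not in $t$. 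A pigeonhole cannot fix this directly because thin spreading defeats it. Third, even if one had a nontrivial $C^\infty_{loc}$-limit $u_\infty$ of the autonomous Floer equation $\partial_su+J(\partial_tu+v_0)=0$, concluding $E(u_\infty)=0$ requires knowing $u_\infty$'s own asymptotic behaviour, which brings us back to the first gap. The paper's energy-identity route bypasses all of this: it never needs a limiting object, only the quantitative formula above together with the uniform $L^1$-bound on $\partial_su$. I would recommend abandoning the contradiction argument and proving (i) directly, as the paper does.
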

\begin{proof}

 By Corollary~\ref{Cor_ApriooriWkpBounds} we have the inequality
\[
\int_{\R^2}|\partial_su(s,t)|\, dsdt< C_{0,1}.
\]      
This implies that for any $\e>0$ and any $T>0$ there exists $\tau\in [T, T+\e^{-1}C_{0,1}]$ such that the estimate holds:
\begin{equation}\label{Eq_EstOnTheLength}
\int_{\R}|\partial_su(s,\tau)|\, ds< \e.
\end{equation}      

Arguing like in the proof of the  energy identity we obtain the equality 
\begin{align*}
  \int\limits_{\R\times [\tau, +\infty)} |\partial_su|^2\, ds\,dt &= I(\tau) - \int\limits_\R\lambda\bigl (\partial_s u(s,\tau)\bigr )\, ds,\\
I(\tau)&=\int\limits_\tau^{+\infty} \Bigl (
 \l (\dot\gamma_+) -  \l (\dot\gamma_-) +
  \im e^{-i\theta(t)}\bigl (f\comp\gamma_+ -f\comp\gamma_-    \bigr ) 
\Bigl )\, dt.
\end{align*}

Pick any $\e>0$ and choose $T_0>0$ so large that $|I(\tau)|<\e$ for all $\tau\ge T_0$. Then, as we have shown above, there exists $\tau\in [T_0, T]$ such that estimate~\eqref{Eq_EstOnTheLength} holds, where $T=T_0+\e^{-1}C_{0,1}$. Hence, we obtain
\[
\int\limits_{\R\times [T,+\infty)} |\partial_su|^2\, dsdt \le \int\limits_{\R\times [\tau,+\infty)} |\partial_su|^2\, dsdt\le |I(\tau)| + \Lambda\int_\R |\partial_s(s,\tau)|\, ds\le \e +\Lambda\e,
\]
where the constant $\Lambda$ depends on $\l$ only. This proves estimate \textit{(i)}.

Let us prove  \textit{(ii)}. We choose $T>0$ so that \textit{(i)} holds. Arguing like in the proof of Corollary~\ref{Cor_ApriooriWkpBounds} for $T'>T$ we obtain  the inequality
\[
\| \partial_su \|_{W^{k,p}(\R\times [T',+\infty))}\le \tilde C_{k,p}\,\e,
\] 
where the constant  $\tilde C_{k,p}$ does not depend on $u$. This in turn implies that there exists a constant $\tilde C$ independent of $u$ such that  the inequality   $\int_\R |\partial_s u(s,t)|\, ds<\tilde C\e$ holds for  all  $t\ge T$. This finishes the proof of~\textit{(ii)}. 

The remaining inequalities are proved in a similar manner.   
\end{proof}

\begin{thm}
  Let $u_k\in\mathcal M(\gamma_-;\gamma_+)$ be any sequence. Then there exists a subsequence (still denoted by $u_k$) and subsequences $s_k^j,\ j=1,\dots, l$, such that $u_k(s+ s_k^j, t)$ converges with its derivatives uniformly on compact subsets of $\R^2$ to $u^j\in\mathcal M(\gamma^{j-1};\gamma^{j})$, where $\gamma^0=\gamma_-,\ \gamma^l=\gamma_+$. 
\end{thm}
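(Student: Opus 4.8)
The plan is to establish a standard Floer-theoretic broken-trajectory compactness statement, using the a priori estimates proved above as the engine. First I would fix the sequence $u_k\in\mathcal M(\gamma_-;\gamma_+)$ and recall what is already available: by Corollary~\ref{Cor_AprioriEstimForCkNorms} all $u_k$ are $C^k$-bounded uniformly in $k$, by Theorem~\ref{Thm_EnergyId} the energies $E(u_k)=\mathscr F(\gamma_+)-\mathscr F(\gamma_-)$ are constant, hence uniformly bounded, and by Theorem~\ref{Thm_AprioriC0Estimates} all images lie in the fixed compact $\hat K$. Next I would renormalize: for each $k$ choose a ``center of mass in the $s$-direction'' — e.g. the smallest $s$ for which $\int_{\mathbb R}|\partial_s u_k(s,t)|\,ds\,dt$ restricted to $\{s'\ge s\}$ drops below half the total, or equivalently the point where a fixed fraction of the energy lies to either side. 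Call it $s_k^1$. Passing to a subsequence and using the $C^k$-bounds together with Arzelà--Ascoli (Proposition~\ref{Prop_BoundInC1ImpliesBoundInCk} gives uniform $C^k$-control on all of $\mathbb R^2$), $u_k(s+s_k^1,t)$ converges with all derivatives on compact subsets of $\mathbb R^2$ to a solution $u^1$ of~\eqref{Eq_PseudoholomPlaneWithHamPert_mod}; elliptic regularity (as in the proof of Theorem~\ref{Thm_ExpDecay}) shows $u^1$ is smooth.

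The second step is to identify the asymptotics of $u^1$. Since the energy $E(u^1)\le\liminf E(u_k)<\infty$ is finite (Fatou applied to $\int|\partial_s u_k(s+s_k^1,t)|^2$), Proposition~\ref{Prop_FinEnergyImpliesLimits} shows that $u^1(s,\cdot)$ converges in $C^0(\mathbb R)$ as $s\to\pm\infty$ to broken flow lines $\gamma^0_{\text{lim}},\gamma^1_{\text{lim}}\in\Gamma(m_-;m_+)$ (here I am suppressing the $\nu$-versus-$0$ distinction, working with the perturbed equation throughout as elsewhere in the section), so $u^1\in\mathcal M(\gamma^0;\gamma^1)$ for some $\gamma^0,\gamma^1$; moreover $\partial_s u^1$ decays exponentially in $s$ by Theorem~\ref{Thm_ExpDecay}. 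The key bookkeeping tool is Proposition~\ref{Prop_AprioriBoundsForTLarge}: the energy escaping to $|t|\ge T$ is uniformly small, so all the energy lives in a fixed $t$-band $[-T,T]$, and the $s$-direction is the only direction along which breaking can occur. An energy-quantization argument (minimal energy $\hbar>0$ of a nonconstant solution, which exists because $\mathcal M$ is nonempty and, by nondegeneracy~(H\ref{Hyp_NondegOfBrokenFlowLines}), the flow lines are isolated) bounds the number of bubbling/breaking points by $E_{\text{tot}}/\hbar$, so the process terminates after finitely many steps $l$.

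The third step is the inductive gluing of the telescope. If $\gamma^1\ne\gamma_+$, there is ``lost energy'' between the region near $s_k^1$ and $s=+\infty$; I would then look at the shifted maps on $\{s\ge s_k^1+C_k\}$ for an appropriate divergent sequence of shifts $s_k^2$ — chosen again by the center-of-mass prescription applied to the remaining energy — and repeat the compactness argument to extract $u^2\in\mathcal M(\gamma^1;\gamma^2)$. One checks $s_k^2-s_k^1\to+\infty$ (otherwise the two limits would glue into a single solution with more energy, contradicting the choice of $s_k^1$, or the energy between them would have to vanish, forcing $\gamma^1=\gamma^2$ trivially). Iterating in both the $+\infty$ and $-\infty$ directions and relabeling produces the chain $\gamma^0=\gamma_-,\gamma^1,\dots,\gamma^l=\gamma_+$ with $u^j\in\mathcal M(\gamma^{j-1};\gamma^j)$, and the ``no energy loss'' identity $\sum_j E(u^j)=E(u_k)$ (which follows from Proposition~\ref{Prop_AprioriBoundsForTLarge}\,(i),(iii) controlling the energy in the transition necks) guarantees that nothing is missed.

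\emph{Main obstacle.} The routine parts are the Arzelà--Ascoli extraction and the elliptic bootstrap — these are handed to us by Corollary~\ref{Cor_AprioriEstimForCkNorms} and the arguments of Theorem~\ref{Thm_ExpDecay}. The delicate point is the \textbf{no-energy-loss claim in the transition regions}: one must show that between consecutive ``centers'' $s_k^j$ and $s_k^{j+1}$, where $u_k(s,\cdot)$ hovers near a flow line $\gamma^j$, no energy is trapped. This is exactly where Proposition~\ref{Prop_AprioriBoundsForTLarge} and the exponential-decay estimate of Theorem~\ref{Thm_ExpDecay} must be combined carefully: the decay gives a ``long neck'' estimate $\int_{[\sigma_1,\sigma_2]\times\mathbb R}|\partial_s u_k|^2 \le Ce^{-\delta\,\mathrm{dist}}$ once $u_k$ is $C^0$-close to a nondegenerate $\gamma^j$ on the relevant slab (nondegeneracy~(H\ref{Hyp_NondegOfBrokenFlowLines}) is what converts $C^0$-proximity into exponential convergence), so the annular energy over an arbitrarily long neck is arbitrarily small. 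Making this quantitative and uniform in $k$ — i.e. choosing the shifts so that the necks are genuinely long and the hypotheses of the decay lemma genuinely apply — is the technical heart of the argument. I would model this step on the standard broken-trajectory compactness proofs in Morse/Floer theory (e.g. Salamon's lectures), adapted to the half-plane geometry already set up in Proposition~\ref{Prop_AprioriBoundsForTLarge}.
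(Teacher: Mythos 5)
Your proposal follows the standard Gromov--Floer broken-trajectory compactness template (center-of-mass shift, energy quantization, no-energy-loss), while the paper's proof is noticeably more elementary and takes a different route. The paper defines $d_0=\tfrac13\inf\{d(\gamma(0),\delta(0)) : \gamma\neq\delta\in\mathcal M(m_-;m_+)\}$, exploiting the discreteness of the set of broken flow lines, and chooses $s_k^1=\sup\{s : d(u_k(s,0),\gamma_-(0))>d_0\}$. That choice immediately forces $d(u^1(s,0),\gamma_-(0))\le d_0$ for $s\ge 0$, hence pins the first asymptotic to $\gamma^0=\gamma_-$ without any energy-balance argument; your center-of-mass normalization yields a limit ``somewhere in the middle'' and then requires iterating outward in both directions, with a no-energy-loss lemma needed to close the chain at both ends. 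Second, the paper terminates the induction by the strict monotonicity $\mathscr F(\gamma^{q-1})<\mathscr F(\gamma^q)$ (the energy identity plus nonconstancy of each $u^j$), rather than by introducing a quantization threshold $\hbar$.

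One concrete misstep in the proposal: you attribute the control of energy in the transition necks to Proposition~\ref{Prop_AprioriBoundsForTLarge}(i),(iii), but that proposition only controls energy escaping to large $|t|$; it says nothing about long $s$-slabs between consecutive shifts $s_k^j$ and $s_k^{j+1}$, which is precisely where you need the no-energy-loss estimate. Similarly, you invoke Theorem~\ref{Thm_ExpDecay} to get the long-neck estimate, but that theorem gives exponential decay for a single fixed solution and is not uniform over the sequence $u_k$; turning it into a uniform neck estimate is nontrivial and is exactly what the paper avoids by its metric criterion. Your route is salvageable and standard elsewhere in Floer theory, but as written it misquotes the available tools and makes the hard step load-bearing where the paper's argument has eliminated it.
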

\begin{proof}
Denote 
\[
d_0=\frac 13\inf\Bigl\{   
d\bigl ( \gamma(0),\delta (0) \bigr )\mid \gamma,\delta\in\mathcal M(m_-; m_+),\ \gamma\neq\delta  
\Bigr\}.
\]  
For an arbitrary sequence  $u_k\in\mathcal M(\gamma_-;\gamma_+)$ put
\[
s_k^1=\sup \{ s\in\R\mid d\bigl ( u_k(s,0), \gamma_-(0)  \bigr )>d_0\;\}.
\]
Notice that by the definition of $s_k^1$ we have 
\begin{equation}\label{Eq_Aux_DistanceToLimitingCurve}
  d\bigl ( u_k(s_k^1,0),\gamma_-(0) \bigr ) = d_0\qquad\text{and}\qquad 
 d\bigl ( u_k(s+s_k^1,0),\gamma_-(0) \bigr ) \le d_0\quad\text{for all } s\ge 0.
\end{equation}

Since the sequence $\sup_{\R^2}\{ |\partial_su_k|,\ |\partial_tu_k|\}$ is bounded, by~\cite[Lemma~5.2]{Salamon:90MorseTheory} we obtain that the sequence $u_k(s+s_k^1, t)$ has a subsequence (still denoted by the same letter) uniformly converging with its derivatives to a map $u^1\colon \R^2\to M$ on  compact subsets of $\R^2$. Clearly, $u^1$ is a solution of~\eqref{Eq_PseudoholomPlaneWithHamPert_mod} with $E_2(u^1)\le \mathscr F(\gamma_+)-\mathscr F(\gamma_-)$. Moreover, by Proposition~\ref{Prop_AprioriBoundsForTLarge}~\textit{(ii)} for any $\e>0$ there exists $T>0$ such that for any  $a,b\in\R,\ a<b$ we have 
\[
\int\limits_a^b|\partial_su^1(s,t)|\, ds=\lim_{k\to\infty}\int\limits_a^b |\partial_su_k(s+s_k^1,t)|\, ds\le\e\quad\Longrightarrow\quad \int\limits_{-\infty}^{+\infty} |\partial_su^1(s,t)|\, ds\le\e
\]
provided $t>T$. This implies that condition~\eqref{Eq_PseudoholomPlaneWithHamPert_BC_t} holds for $u=u^1$. Then, by  Proposition~\ref{Prop_FinEnergyImpliesLimits} we obtain that there exist $\gamma^0,\gamma^1\in\mathcal M(m_-;m_+)$ such that 
\begin{align*}
 & \lim_{s\to +\infty} u^1(s,t)=\gamma^0(t),  & &  \lim_{s\to +\infty}\int_a^b |\partial_s u^1(s,t)|\, dt =0,\\
 & \lim_{s\to -\infty} u^1(s,t)=\gamma^1(t),  & &  \lim_{s\to -\infty}\int_a^b |\partial_s u^1(s,t)|\, dt =0.
\end{align*}
On the other hand,  from~\eqref{Eq_Aux_DistanceToLimitingCurve} we obtain that $d\bigl ( u^1(s,0), \gamma_-(0) \bigr )\le d_0$ for all $s\ge 0$. Hence $\gamma^0=\gamma_-$.

We are done if $\gamma^1=\gamma_+$. If this is not the case we proceed by induction. Having established the existence of the sequences $s_k^j$ such that $u_k(s+s_k^j, t)$ converges to $u^j\in\mathcal M(\gamma^{j-1},\gamma^j)$ for $j=1,\dots, q$ we choose $s^*<0$ such that  $d\bigl ( u^q(s^*,0), \gamma^q(0) \bigr ) < d_0$. For $k$ sufficiently large we then have $d\bigl ( u_k(s_k^q+s^*,0), \gamma^q(0) \bigr ) < d_0$. Define
\[
s_k^{q+1}=\inf\bigl\{
s\le s_k^q +s^*\mid\quad d\bigl ( u_k(\sigma,0), \gamma^q(0) \bigr )\le d_0\quad\text{for } s\le\sigma\le s_k^q+s^*
  \bigr\}.
\]
Passing to a subsequence if necessary we may assume that $u_k(s+s_k^{q+1},t)$ converges to $u^{q+1}\in\mathcal M(\gamma^q,\gamma^{q+1})$ with $\gamma^{q+1}\neq \gamma^{q}$. This finishes the induction step. Finally, the process is finite, since for all $q=1,\dots, l$ we must have $\mathscr F(\gamma^{q-1})<\mathscr F(\gamma^{q})$.
\end{proof}

\begin{cor}
  The space 
\[
\check{\mathcal M}(m_-; m_+)=\bigcup_{\gamma_\pm\in\mathcal M(m_-;m_+)} \mathcal M(\gamma_-;\gamma_+) 
\]
is compact.
\end{cor}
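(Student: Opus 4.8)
The plan is to show that $\check{\mathcal M}(m_-;m_+)$ is sequentially compact in the topology of $C^\infty_{loc}(\R^2;M)$ (uniform convergence of all derivatives on compact subsets of $\R^2$); since this topology is metrizable, this is equivalent to compactness. So let $u_k\in\check{\mathcal M}(m_-;m_+)$ be an arbitrary sequence, say $u_k\in\mathcal M(\gamma_-^k;\gamma_+^k)$ with $\gamma_\pm^k\in\mathcal M(m_-;m_+)$. First I would record that $\mathcal M(m_-;m_+)$ is a \emph{finite} set: it is discrete by~(H\ref{Hyp_NondegOfBrokenFlowLines}), and precompact by Theorem~\ref{Thm_AprioriC0Estimates} applied to the $s$--independent solution $(s,t)\mapsto\gamma(t)$ (which forces $\gamma(\R)\subset\hat K$) together with the Arzel\`a--Ascoli theorem, exactly as in Step~\ref{Step_ConvergentSubseq} of the proof of Proposition~\ref{Prop_FinEnergyImpliesLimits}. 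Next, by Corollary~\ref{Cor_AprioriEstimForCkNorms} the sequence $\{u_k\}$ is bounded in $C^j(\R^2;M)$ for every $j$, and by Theorem~\ref{Thm_AprioriC0Estimates} all $u_k$ take values in the single compact set $\hat K$. Hence Arzel\`a--Ascoli and a diagonal argument over an exhaustion of $\R^2$ yield a subsequence (still written $u_k$) converging in $C^\infty_{loc}$ to a smooth map $u_\infty\colon\R^2\to\hat K$. Passing to the limit in~\eqref{Eq_PseudoholomPlaneWithHamPert_mod}, the map $u_\infty$ again solves~\eqref{Eq_PseudoholomPlaneWithHamPert_mod}; and since $\|\partial_su_k\|_{L^2(\R^2)}\le C_{0,2}$ independently of $k$ by Corollary~\ref{Cor_ApriooriWkpBounds}, Fatou's lemma gives $E(u_\infty)=\|\partial_su_\infty\|_{L^2}^2\le C_{0,2}^2<\infty$.

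It then remains to verify that $u_\infty$ satisfies the boundary conditions~\eqref{Eq_PseudoholomPlaneWithHamPert_BC_t}--\eqref{Eq_PseudoholomPlaneWithHamPert_BC_s}. For the $t$--integrals in~\eqref{Eq_PseudoholomPlaneWithHamPert_BC_t}, Proposition~\ref{Prop_AprioriBoundsForTLarge} provides, for every $\e>0$, a $T>0$ \emph{independent of the solution} such that $\int_\R|\partial_su_k(s,t)|\,ds<\e$ for all $|t|\ge T$ and all $k$; by $C^1_{loc}$--convergence and Fatou's lemma the same bound holds for $u_\infty$, so $\lim_{t\to\pm\infty}\int_\R|\partial_su_\infty(s,t)|\,ds=0$. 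For the pointwise limit $\lim_{t\to\pm\infty}u_\infty(s,t)=m_\mp$ (uniformly in $s$) I would argue as follows: for $|t|$ large $\theta_\nu$ vanishes, so on that region $u_\infty$ is a finite--energy solution of the (negative) gradient--flow equation of $\re f$; since the critical points of $\re f$ coincide with those of $f$, standard asymptotic analysis shows $u_\infty(s,t)$ converges as $t\to\pm\infty$ to some critical point of $f$, uniformly in $s$. To identify this critical point with $m_\mp$, note that $\mathcal M(m_-;m_+)$ is finite and every $\gamma$ in it satisfies $\gamma(t)\to m_\mp$ as $t\to\pm\infty$, so for each $\e>0$ there is $T_\e$ with $d(\gamma(t),m_\mp)<\e$ for all these finitely many $\gamma$ simultaneously and all $|t|\ge T_\e$; combining this with the uniform estimate $\int_\R|\partial_su_k(s,t)|\,ds<\e$ for $|t|\ge T$ from Proposition~\ref{Prop_AprioriBoundsForTLarge} and with $\lim_{s\to\pm\infty}u_k(s,t)=\gamma_\mp^k(t)$, one gets $u_k\bigl(\R\times\{t\}\bigr)\subset B_{2\e}(m_\mp)$ for $|t|$ large, uniformly in $k$, hence $u_\infty\bigl(\R\times\{t\}\bigr)\subset\overline{B_{2\e}(m_\mp)}$ as well. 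Thus~\eqref{Eq_PseudoholomPlaneWithHamPert_BC_t} holds for $u_\infty$, and Proposition~\ref{Prop_FinEnergyImpliesLimits} then yields~\eqref{Eq_PseudoholomPlaneWithHamPert_BC_s} with limit curves $\gamma_\mp^\infty\in\mathcal M(m_-;m_+)$. Therefore $u_\infty\in\mathcal M(\gamma_-^\infty;\gamma_+^\infty)\subset\check{\mathcal M}(m_-;m_+)$, and the compactness of $\check{\mathcal M}(m_-;m_+)$ follows.

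The step I expect to be the main obstacle is the verification of~\eqref{Eq_PseudoholomPlaneWithHamPert_BC_t} for the limit $u_\infty$, specifically the identification of its $t$--asymptotics with $m_\mp$: this is where the \emph{uniformity in $u$} of all a priori estimates of the previous subsections (Corollaries~\ref{Cor_AprioriEstimForCkNorms} and~\ref{Cor_ApriooriWkpBounds}, and Proposition~\ref{Prop_AprioriBoundsForTLarge}) and the finiteness of $\mathcal M(m_-;m_+)$ are indispensable. It is worth stressing that the bubbling/breaking phenomenon of the preceding theorem does not interfere here, because no $s$--translation is performed: a sequence $u_k$ that ``escapes'' or develops a long neck near $s=\pm\infty$ simply has an $s$--independent $C^\infty_{loc}$--limit, that is, a broken flow line of $f$, which still belongs to $\check{\mathcal M}(m_-;m_+)$ precisely because the latter is the union over \emph{all} pairs $(\gamma_-,\gamma_+)$ of broken flow lines.
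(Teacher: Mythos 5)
Your proof is correct and takes essentially the same approach as the paper: the Corollary is stated without a separate proof, and the intended argument is exactly the one you give, namely Step~1 of the proof of the preceding theorem with the $s$--translation removed — a priori $C^\infty$--bounds plus Arzel\`a--Ascoli to extract a $C^\infty_{loc}$--limit $u_\infty$, then Proposition~\ref{Prop_AprioriBoundsForTLarge} to verify~\eqref{Eq_PseudoholomPlaneWithHamPert_BC_t} and Proposition~\ref{Prop_FinEnergyImpliesLimits} to recover the end curves in~\eqref{Eq_PseudoholomPlaneWithHamPert_BC_s}. Dropping the translation is precisely right here, since $\check{\mathcal M}(m_-;m_+)$ is already closed under breaking; and your explicit identification of the $t$--asymptotics of $u_\infty$ with $m_\mp$, via the uniformity in $u$ of the estimates together with the finiteness of $\Gamma_\nu(m_-;m_+)$, spells out a point the paper treats only implicitly.
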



\subsection{Fredholm property}

The following result, which simplifies the arguments used in earlier versions of the preprint, has been communicated to the author by V.~Rabinovich.

\begin{lem}\label{Lem_UniformlyEllipticOpAreFredholm}
Let 
\[
A=\sum_{|\alpha|\le m}a_\a(x)\frac \partial{\partial x^\a},\qquad x\in\R^n
\]
be a uniformly elliptic $C^\infty$--bounded differential operator of order $m$, where $a_\a$ takes values in the space of  $l\times l$-matrices. If  $A\colon W^{m,2}(\R^n;\R^l)\rightarrow L^2(\R^n;\R^l)$ is Fredholm, then  $A\colon W^{k+m,p}(\R^n;\R^l)\rightarrow W^{k,p}(\R^n;\R^l)$ is Fredholm for all  $k\in\R$, $p>1$ and its index depends neither on $k$ nor on $p$.
\end{lem}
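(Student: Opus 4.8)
The plan is to reduce the general statement to the already-understood $L^2$ case via two independent reductions: first in the Sobolev exponent $k$ (the regularity axis), then in the integrability exponent $p$. Throughout, the key structural facts are that $A$ is uniformly elliptic and $C^\infty$-bounded on all of $\R^n$, so that $A$ admits a global parametrix $Q$ of order $-m$ (constructed from the symbol calculus, using that the coefficients and all their derivatives are bounded) satisfying $QA = \mathbbm{1} + R_1$, $AQ = \mathbbm{1} + R_2$ with $R_1, R_2$ of order $-\infty$, i.e. smoothing operators that are bounded $W^{s,q} \to W^{s',q}$ for all $s \le s'$ and all $q \in (1,\infty)$. This is the substitute for compactness of remainders on a closed manifold, and it is exactly the kind of statement available from \cite{Shubin:91_SpectralTheoryOnNoncompMflds} that the paper has already invoked elsewhere. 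The crucial subtlety, which I expect to be the main obstacle, is that on a noncompact manifold $R_1, R_2$ need \emph{not} be compact operators, so one cannot conclude the Fredholm property of $A$ between \emph{every} pair of spaces just from the parametrix identities; instead one must transport the Fredholm property from the one pair of spaces where it is \emph{assumed} to hold.

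Concretely, I would proceed as follows. \textbf{Step 1 (Regularity/elliptic estimate).} From the global parametrix one gets, for each $k \in \R$ and $p > 1$, the a priori estimate $\|u\|_{W^{k+m,p}} \le C\bigl(\|Au\|_{W^{k,p}} + \|u\|_{W^{k,p}}\bigr)$, and more importantly the inclusion $\ker\bigl(A: W^{k+m,p} \to W^{k,p}\bigr) \subseteq \bigcap_{s, q} W^{s,q}$: any distributional solution of $Au = 0$ is smooth and lies in every Sobolev space. In particular the kernel is independent of $(k,p)$ and coincides with the kernel of the $L^2$-operator, hence is finite-dimensional by hypothesis. \textbf{Step 2 (Closed range and cokernel via the formal adjoint).} The formal adjoint $A^*$ is again uniformly elliptic and $C^\infty$-bounded, and $A^*: W^{m,2} \to L^2$ is Fredholm because $A$ is (the $L^2$ adjoint of a Fredholm operator is Fredholm). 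Applying Step 1 to $A^*$ shows $\ker A^*$ is a fixed finite-dimensional space of Schwartz-like functions, independent of the target space. One then identifies $\coker\bigl(A: W^{k+m,p} \to W^{k,p}\bigr)$ with $\ker\bigl(A^*: W^{-k,p'} \to W^{-k-m,p'}\bigr)$ once the range of $A$ is shown to be closed; the latter kernel is the \emph{same} finite-dimensional space for all $(k,p)$. Closedness of the range I would deduce from the parametrix estimate combined with finite-dimensionality of the kernel in the usual way (the estimate $\|u\|_{W^{k+m,p}} \le C\|Au\|_{W^{k,p}}$ on a complement of the kernel, which follows from the a priori estimate of Step 1 once the lower-order term $\|u\|_{W^{k,p}}$ is absorbed using that $\ker A$ is finite-dimensional and the inclusion of the unit ball of $\ker A$ into $W^{k,p}$ — trivially compact since $\ker A$ is finite-dimensional).

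\textbf{Step 3 (Assembling Fredholmness and the index).} Combining Steps 1–2, $A: W^{k+m,p} \to W^{k,p}$ has finite-dimensional kernel (independent of $k,p$), closed range, and finite-dimensional cokernel naturally isomorphic to $\ker A^*$ (independent of $k,p$). Hence $A$ is Fredholm for every admissible $(k,p)$ and $\operatorname{ind} A = \dim \ker A - \dim \ker A^*$, a number manifestly independent of $k$ and $p$; in particular it equals the index of the $L^2$-operator. The one genuine difficulty to be careful about is Step 2's identification of the cokernel with $\ker A^*$: on $\R^n$ the pairing $W^{k,p} \times W^{-k,p'} \to \R$ is perfect and the annihilator of $\operatorname{range} A$ is $\ker A^*$ provided $\operatorname{range} A$ is closed, but one must check that the formal adjoint (differential operator) genuinely represents the Banach-space adjoint on these scales, which is where $C^\infty$-boundedness of the coefficients is used once more to guarantee $A^*$ acts boundedly on all the scales and that integration by parts is justified. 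Everything else is the standard elliptic package adapted to $\R^n$ via a global symbolic parametrix rather than a locally finite partition of unity.
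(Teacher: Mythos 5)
Your proposal diverges from the paper's argument and contains a genuine gap in the closed-range step. You correctly flag, at the outset, that on $\R^n$ the parametrix remainders are not compact, so a parametrix alone does not yield Fredholmness and the Fredholm hypothesis must be ``transported.'' But your Step~2 then reverts to the very argument this caveat forbids. Passing from the a priori estimate $\|u\|_{W^{k+m,p}} \le C\bigl(\|Au\|_{W^{k,p}} + \|u\|_{W^{k,p}}\bigr)$ to the estimate $\|u\|_{W^{k+m,p}} \le C\|Au\|_{W^{k,p}}$ on a complement of $\ker A$ is the Peetre-type absorption argument, and it requires the embedding $W^{k+m,p}\hookrightarrow W^{k,p}$ to be \emph{compact}: if it fails, one takes $u_n$ in the complement with $\|u_n\|_{W^{k+m,p}}=1$, $\|Au_n\|_{W^{k,p}}\to 0$, and needs a strongly convergent subsequence to produce a nonzero kernel element in the complement. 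On $\R^n$ that subsequence does not exist (translating bumps), so the argument breaks. The finite-dimensionality of $\ker A$ and ``the inclusion of the unit ball of $\ker A$ is compact'' do not substitute for compactness of the full embedding. Thus your Step~2 does not actually use the Fredholm hypothesis beyond finite-dimensionality of the kernel, and no amount of parametrix calculus alone will recover closed range on $\R^n$. A secondary concern: the inclusion $\ker A \subseteq \bigcap_{s,q}W^{s,q}$ is also not a consequence of the parametrix or of local elliptic regularity. Regularity upgrades the differentiability index $s$ (for fixed $p$), but lowering $p$ requires control of decay at infinity; that control comes from the Fredholm hypothesis via invertibility of the limit operators and Agmon-type exponential decay of kernel elements, i.e.\ exactly the Rabinovich--Roch circle of ideas.

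For comparison, the paper's proof runs through the limit-operator machinery. From Fredholmness of $A\colon W^{m,2}\to L^2$ it concludes (via the cited Wiener-algebra theorem) that all limit operators $A^g$ are invertible on this one pair of spaces; Beals' characterization of pseudodifferential operators then promotes $(A^g)^{-1}$ to a $\Psi$DO of order $-m$, hence bounded on every $W^{k,2}$-scale, and a second application of the limit-operator criterion yields Fredholmness of $A\colon W^{k+m,2}\to W^{k,2}$ for \emph{all} $k$. The extension to $p\neq 2$ is done not through a Peetre estimate but through the Sobolev embedding into an $L^2$-scale already known to have closed range, followed by $L^p$-elliptic regularity to return to $W^{k+m,p}$. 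Kernel and cokernel independence then hinge on the same global regularity of kernel elements. If you want to repair your proposal, you would need to replace Step~2 with an actual transfer mechanism: either the limit-operator argument, or a proof that the Fredholm hypothesis forces exponential decay of the (co)kernel and that $A$ admits a genuine regularizer up to a finite-rank operator --- neither of which the parametrix gives on its own.
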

\begin{proof}
Following~\cite{RabinovichRoch:08_AgmonsEstimates} we say that $A^g=\sum_{|\alpha|\le m}a_\a^g(x)\frac \partial{\partial x^\a}$ is a limit operator of $A$ if for some sequence $g\colon\mathbb N\to\R^n$ such that $g_j\to\infty$ we have $a_\a(x+g_j)\to a_\a^g(x)$ uniformly on all compact subsets of $\R^n$. Then by Theorem 5.6 of~\cite{RabinovichRoch04_WienerAlgebras} (see also Theorem~2 of~\cite{RabinovichRoch:08_AgmonsEstimates}) the operators $A^g\colon W^{m,2}(\R^n;\R^l)\to L^2(\R^n;\R^l)$ are invertible. Observe that $A^g$ is a (pseudo)differential operator with the symbol from H\"ormander's class $S^m_{1,0}$. By~\cite[Theorem~3.2]{Beals:77_CharOfPseudodiffOp} the inverse $(A^g)^{-1}$ is a pseudodifferential operator with the symbol from $S^{-m}_{1,0}$. Hence, $(A^g)^{-1}\colon W^{k-m,2}(\R^n;\R^l)\to W^{k,2}(\R^n;\R^l)$ is bounded for any $k\in\R$. Applying~\cite[Theorem 5.6]{RabinovichRoch04_WienerAlgebras} again we obtain that $A\colon W^{k+m,2}(\R^n;\R^l)\rightarrow W^{k,2}(\R^n;\R^l)$ is Fredholm for any $k\in\R$.

  For arbitrary $k$ and $p>1$  put $k'=\min\{k-1, k-\frac np+\frac n2 \}$ to obtain the embeddings  $W^{k,p}(\R^n;\R^l)\hookrightarrow W^{k',2}(\R^n;\R^l), W^{k+m,p}(\R^n;\R^l)\hookrightarrow W^{k'+m,2}(\R^n;\R^l)$.  We claim that $A\colon W^{k+m,p}(\R^n;\R^l)\to W^{k,p}(\R^n;\R^l)$ has a closed range. Indeed, let $\zeta_n$ be any sequence from $A\bigl ( W^{k+m,p}(\R^n;\R^l) \bigr )$ converging to $\zeta_0$ in $W^{k,p}(\R^n;\R^l)$. Then $\zeta_n\in A\bigl ( W^{k'+m,2}(\R^n;\R^l) \bigr )$ converges to $\zeta_0$ in $W^{k',2}(\R^n;\R^l)$. Hence  $\zeta_0=A\xi_0$ for some $\xi_0\in W^{k'+m,2}(\R^n;\R^l)$.  Since $A$ is $C^\infty$--bounded uniformly elliptic operator,  $\zeta_0\in W^{k,p}(\R^n;\R^l)$ implies that $\xi_0\in W^{k+m,p}(\R^n;\R^l)$. This proves that $A\bigl ( W^{k+m,p}(\R^n;\R^l) \bigr )$ is closed in $W^{k,p}(\R^n;\R^l)$.

Furthermore,  $C^\infty$--boundedness and  uniform ellipticity imply that if $\xi\in W^{k,p}(\R^n;\R^l)$ is in the kernel of $A$ for some $k$ and $p$, then $\xi\in W^{k,p}(\R^n;\R^l) $ for all $k$ and $p$. In particular,  for any $k$ and $p$ the dimension of $\ker\bigl (A\colon W^{k+m,p}(\R^n;\R^l) \to W^{k,p}(\R^n;\R^l) \bigr )$ is finite and depends neither on $k$ nor on $p$. Moreover, applying similar arguments to the formal adjoint operator of $A$ we obtain that  the dimension of $\coker\bigl (A\colon W^{k+m,p}(\R^n;\R^l) \to W^{k,p}(\R^n;\R^l) \bigr )$ is also finite and depends neither on $k$ nor on $p$.
\end{proof}


In the lemma below we use the same notations as in Lemma~\ref{Lem_ExpDecayForAnOperator}. 

\setcounter{thestep}{0}
\begin{lem}
  Assume that in addition to hypotheses \textit{(i)}--\textit{(iii)} of Lemma~\ref{Lem_ExpDecayForAnOperator} the following holds:
  \begin{itemize}
  \item[(a)] For each $t\in\R$ the matrix $J_0S_\pm(t)$ is symmetric;
  \item[(b)] $S(s,t)$ converges to constant matrices $H_\pm$ in the  $C^0$--topology as $t\to\mp\infty$. Moreover, 
\[
H_+=\lim_{t\to -\infty}S_+(t)=\lim_{t\to -\infty}S_-(t),\qquad
H_-=\lim_{t\to +\infty}S_+(t)=\lim_{t\to +\infty}S_-(t) 
\] 
are symmetric matrices.
  \end{itemize}
Then $L\colon W^{k+1,p}(\R^2;\R^{2n})\to W^{k,p}(\R^2;\R^{2n})$ is Fredholm for any $k\in\R$, $p>1$ and its index depends neither on $k$ nor on $p$.
\end{lem}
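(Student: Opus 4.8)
The plan is to reduce the statement to the Hilbert space case $k=0$, $p=2$ and then invoke the limit operator criterion underlying Lemma~\ref{Lem_UniformlyEllipticOpAreFredholm}. Since $S$ is $C^\infty$--bounded by hypothesis~\textit{(i)} of Lemma~\ref{Lem_ExpDecayForAnOperator} and the leading part $\partial_s+J_0\partial_t$ has constant coefficients with invertible symbol $i(\xi\,\mathbbm 1+\eta J_0)$, the operator $L=\partial_s+J_0\partial_t+S(s,t)$ is a uniformly elliptic $C^\infty$--bounded first order operator on $\R^2$. Hence, by Lemma~\ref{Lem_UniformlyEllipticOpAreFredholm}, it suffices to show that $L\colon W^{1,2}(\R^2;\R^{2n})\to L^2(\R^2;\R^{2n})$ is Fredholm, and by the limit operator theorem used in the proof of that lemma this follows once every limit operator of $L$ is shown to be invertible on $L^2(\R^2;\R^{2n})$; the asserted independence of the index of $k$ and $p>1$ is then automatic from Lemma~\ref{Lem_UniformlyEllipticOpAreFredholm}.

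Next I would enumerate the limit operators. Given a sequence $g_j=(a_j,b_j)\to\infty$, pass to a subsequence so that each of $a_j,b_j$ either converges in $\R$ or tends to $\pm\infty$. If $b_j\to\beta\in\R$, then hypothesis~\textit{(ii)} of Lemma~\ref{Lem_ExpDecayForAnOperator} gives $S(s+a_j,t+b_j)\to S_\mp(t+\beta)$ locally uniformly, so the limit operator is the $s$--independent operator $N_\beta=\partial_s+J_0\partial_t+S_\mp(\,\cdot+\beta)=\partial_s+J_0\,l_\mp(\,\cdot+\beta)$; all such operators are conjugate by a translation in $t$, so it is enough to treat $N=\partial_s+J_0\,l_\mp$. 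If $b_j\to\pm\infty$, then hypothesis~\textit{(b)} (together with~\textit{(ii)} and the convergence $S_\pm(t)\to H_\mp$ built into~\textit{(b)}) forces the limit operator to be the constant coefficient operator $L_\pm=\partial_s+J_0\partial_t+H_\pm$. Thus only these two types occur.

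For $L_\pm$ I would work on the symbol. Letting $t\to\mp\infty$ in hypothesis~\textit{(a)} shows that $J_0H_\pm$ is symmetric, and combined with $H_\pm$ symmetric (hypothesis~\textit{(b)}) this means that $H_\pm$ anticommutes with $J_0$; moreover $H_\pm$ is invertible, since otherwise the asymptotic matrix of the Fredholm operator $l_\pm$ (Fredholm by hypothesis~\textit{(iii)}) would be singular. Invertibility of $L_\pm$ on $L^2(\R^2)$ is equivalent, via the Fourier transform in both variables, to invertibility of $M(\xi,\eta)=i\xi\,\mathbbm 1+i\eta J_0+H_\pm$ for every $(\xi,\eta)\in\R^2$ together with a uniform bound on $M(\xi,\eta)^{-1}$, and the uniform bound is automatic from ellipticity once pointwise invertibility is known. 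Pairing $M(\xi,\eta)v=0$ with $v$ and comparing imaginary parts (using that $\langle J_0v,v\rangle$ is purely imaginary while $\langle H_\pm v,v\rangle$ is real) forces $\xi|v|^2=0$, hence $\xi=0$; and for $\xi=0$, splitting $\C^{2n}$ into the $\pm i$--eigenspaces $E_\pm$ of $J_0$ (which $H_\pm$ interchanges, as it anticommutes with $J_0$) reduces a kernel vector of $i\eta J_0+H_\pm$ to a kernel vector of $\eta^2\,\mathbbm 1+(H_\pm^2)|_{E_+}$, which is impossible since $H_\pm^2$ is positive definite. So $L_\pm$ is invertible.

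Finally, for $N=\partial_s+J_0\,l_\mp$ I would Fourier transform in $s$, reducing invertibility of $N$ on $L^2(\R^2)$ to uniform invertibility over $\xi\in\R$ of $i\xi\,\mathbbm 1+J_0\,l_\mp$ on $L^2(\R;\C^{2n})$, equivalently (multiplying by $-J_0$) of $l_\mp-i\xi J_0=\partial_t-\bigl(J_0S_\mp(t)+i\xi J_0\bigr)$. At $\xi=0$ this is $l_\mp$, invertible by hypothesis~\textit{(iii)}. For $\xi\neq0$ the coefficient $J_0S_\mp(t)+i\xi J_0$ is Hermitian (hypothesis~\textit{(a)} makes $J_0S_\mp(t)$ Hermitian), its limits $J_0H_\pm+i\xi J_0=J_0(H_\pm+i\xi\,\mathbbm 1)$ have no eigenvalue on $i\R$ precisely because the symbols $M(\xi',\eta')$ of the previous step are everywhere invertible, so $l_\mp-i\xi J_0$ is Fredholm; its index is locally constant in $\xi$ (the Hermitian ends have real eigenvalues that never cross $0$), hence equal to $\Ind l_\mp=0$; one then rules out a kernel for all $\xi\neq0$ by combining the openness of invertibility near $\xi=0$ with the explicit transversality of the stable and unstable subspaces in the limit $|\xi|\to\infty$, where the zeroth order term is dominated by $i\xi J_0$ whose eigenspaces are exactly $E_\pm$. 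Granting this, all limit operators of $L$ are invertible and the proof concludes as above. \textbf{The main obstacle} is precisely this last point: upgrading the Fredholm property of the $\xi$--family $l_\mp-i\xi J_0$ to its invertibility for every real $\xi$, which amounts to controlling the spectral flow of the family rather than merely its ends; hypothesis~\textit{(a)}, which puts the coefficients of this family into Hermitian (i.e.\ self-adjoint) form, is what makes such a control available, and its careful use is the technical heart of the argument.
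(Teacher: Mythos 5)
Your overall route — reduce to $(k,p)=(0,2)$ via Lemma~\ref{Lem_UniformlyEllipticOpAreFredholm}, then verify that every limit operator of $L$ is invertible on $L^2$ — is exactly the paper's, and your enumeration of the limit operators (the constant-coefficient operators $\partial_s+J_0\partial_t+H_\pm$ and $t$-translates of the $s$-independent operators $\partial_s+J_0\partial_t+S_\pm(t)$) is correct. For the constant-coefficient pieces your argument is valid and genuinely different from the paper's: you take the full Fourier transform and check pointwise invertibility of the symbol, using that $H_\pm$ anticommutes with $J_0$ and that $H_\pm^2$ is positive definite, whereas the paper conjugates by $J_0$, Fourier-transforms in one variable, and reduces to invertibility of $k_\pm=\tfrac d{ds}+H_\pm$ together with the self-adjointness of $J_0 k_\pm$. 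Either route works.

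The genuine gap is exactly where you flag it, and as written it is not closed. For the $s$-independent limit operators you must show that $i\xi+J_0l_\mp$ (equivalently $l_\mp-i\xi J_0$) is invertible for \emph{every} real $\xi$; your sketch — index zero by continuity, openness near $\xi=0$, transversality of stable/unstable subspaces as $|\xi|\to\infty$ — does not exclude a loss of injectivity at some intermediate $\xi$, and no mechanism is given that actually controls the spectral flow across the whole line. The paper's Step~1 disposes of this in one line, using hypothesis~\textit{(a)} in a simpler way than you attempt: after Fourier-transforming $L_\pm$ in $s$ one is looking at $i\sigma+J_0l_\pm$, and the whole point of~\textit{(a)} is that $J_0l_\pm$ is a self-adjoint unbounded operator on $L^2(\R;\R^{2n})$; by hypothesis~\textit{(iii)} the number $0$ is in its resolvent set, and since a self-adjoint operator has real spectrum, $i\sigma+J_0l_\pm$ is invertible for every real $\sigma$ with a uniform bound on the inverse, whence $L_\pm$ is an isomorphism by inverse Fourier transform. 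You correctly identified that hypothesis~\textit{(a)} puts the coefficients in self-adjoint form; what you are missing is to use that self-adjointness at the level of the operator $J_0l_\pm$ itself — so that the imaginary shift $i\sigma$ automatically lies in the resolvent set — instead of entering a spectral-flow analysis of the conjugated family $l_\mp-i\xi J_0$, which is strictly harder and is never completed.
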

\begin{proof}

The proof consists of the following three steps.

\begin{step}\label{Step_TranslInvarOperatorsAreIso}
Consider the $s$--independent operators
\[
L_\pm =\partial_s +J_0\partial_t +S_\pm(t).
\]
Then $L_\pm\colon W^{1,2}(\R^2;\R^{2n})\to L^2(\R^2;\R^{2n})$ are invertible.     
\end{step}

Since $l_\pm\colon W^{1,2}(\R;\R^{2n})\to L^2(\R;\R^{2n})$ are isomorphisms we have the estimates
\begin{equation}\label{Eq_AuxiliaryEstimateForLpm}
\|\eta\|_{W^{1,2}}\le C_\pm \| l_\pm\eta\|_{L^2},\qquad \text{for all }\eta\in  W^{1,2}(\R;\R^{2n}).
\end{equation}
Pick any $\zeta\in C^\infty(\mathbb R^2;\R^{2n})$ with compact support  and apply the Fourier transform in the variable $s$ to the equation $L_\pm(\eta)=\zeta$ to obtain%
\begin{equation*}
 i\sigma\,\hat\eta(\sigma, t) +J_0l_\pm\hat\eta(\sigma, t)=\hat\zeta(\sigma ,t).
\end{equation*}
Observe that $J_0l_\pm$ is a  symmetric operator such that $0$ does not belong to the spectrum of $J_0l_\pm$. Hence, the above equation is solvable for any real $\sigma$.  Applying the inverse Fourier transform we obtain a solution $\eta$ of the initial equation $L_\pm(\eta)=\zeta$. Moreover, with the help of~\eqref{Eq_AuxiliaryEstimateForLpm}   an easy computation yields the estimate $\| \eta\|_{W^{1,2}}\le \tilde C_\pm\| \zeta\|_{L_{2}}$.  This implies that $L_\pm$ are isomorphisms.

\begin{step}
  Consider the operators 
\[
K_\pm =\partial_s + J_0\partial_t + H_\pm
\]
with constant coefficients. Then $K_\pm\colon W^{1,2}(\R^2;\R^{2n})\to L^2(\R^2;\R^{2n})$ are invertible. 
\end{step}

Write $J_0K_\pm = -\partial_t + J_0\partial_s + J_0H_\pm$ and observe that the operators 
\[
k_\pm=\tfrac d{ds} +H_\pm\colon W^{1,2}(\R;\R^{2n})\to L^2(\R;\R^{2n})
\] 
are isomorphisms. Indeed,  any function satisfying $k_\pm\eta=0$ can be expressed through exponential functions and therefore does not belong to $W^{1,2}(\R; \R^{2n})$. Similarly, the cokernel of $k_\pm$ is also trivial. The rest of the proof of this step is analogous to the proof of Step~\ref{Step_TranslInvarOperatorsAreIso}.


\begin{step}\label{Step_EndOfLemmaUniformEllipticAreFredholm}
  We prove the lemma.
\end{step}


Clearly, any limit operator $L_0=\partial_s +J_0\partial_t +S_0(s,t)$ of $L$  must be $K_\pm$ or
\[
L_\pm^\tau=\partial_s +J_0\partial_t +S_\pm(t+\tau)= V_\tau L_\pm V_{-\tau},
\]
where $V_\tau$ denotes the shift operator $\xi(t)\mapsto \xi(t+\tau)$.  Since $V_\tau$ acts as an isomorphism on $W^{k,2}(\R^2, \R^{2n})$ for any $k$, the operator $L_\pm^\tau\colon W^{1,2}(\R^2, \R^{2n})\to L^2(\R^2, \R^{2n})$ is also an isomorphism. Hence, by~\cite[Theorem~5.6]{RabinovichRoch04_WienerAlgebras}  we obtain that $L\colon W^{1,2}(\R^2, \R^{2n})\to L^2(\R^2, \R^{2n})$ is Fredholm. Then the statement of the lemma follows  from Lemma~\ref{Lem_UniformlyEllipticOpAreFredholm}.
\end{proof}


\begin{thm}\label{Thm_LinearizationIsFredholm}
For each solution $u$ of~\eqref{Eq_PseudoholomPlaneWithHamPert_mod}--\eqref{Eq_PseudoholomPlaneWithHamPert_BC_s} the  map  $D_u\Sigma\colon W^{k+1,p}(\mathbb R^2; u^*TM)\rightarrow W^{k,p}(\mathbb R^2; u^*TM)$ is Fredholm for any $k\in\R$, $p> 1$ and its index depends neither on $k$ nor on $p$.
 \end{thm}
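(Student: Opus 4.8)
The plan is to pass to the unitary trivialisation $\Psi$ of $u^*TM$ (for instance the parallel one of Remark~\ref{Rem_UnitaryTrivialization}), in which, by~\eqref{Eq_LiniarizationInAFrame}, the operator $D_u\Sigma$ becomes $L=\partial_s+J_0\partial_t+S(s,t)$ and $D_{\gamma_\pm}\sigma$ becomes $l_\pm=\tfrac d{dt}-J_0S_\pm(t)$. Since $\Psi$ is a bounded isomorphism $W^{m,p}(\R^2;\R^{2n})\to W^{m,p}(\R^2;u^*TM)$ for all $m,p$, the assertion for $D_u\Sigma$ is equivalent to the corresponding one for $L$, and the latter will follow from the Fredholm lemma immediately preceding this theorem once I verify its hypotheses~\textit{(i)}--\textit{(iii)} (those of Lemma~\ref{Lem_ExpDecayForAnOperator}) together with~\textit{(a)} and~\textit{(b)}.

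First I would dispose of the hypotheses that are essentially already in hand. The $C^\infty$--boundedness of $S$ (hypothesis~\textit{(i)}) was established in the proof of Theorem~\ref{Thm_ExpDecay} as a consequence of Corollary~\ref{Cor_AprioriEstimForCkNorms}. The convergence $S(s,t)\to S_\pm(t)$ in $C^0(\R)$ as $s\to\mp\infty$ (hypothesis~\textit{(ii)}) follows, again as in the proof of Theorem~\ref{Thm_ExpDecay}, by feeding the bounds $|\partial_su(s,t)|\to0$ and $d\bigl(u(s,t),\gamma_\pm(t)\bigr)\to0$ of Proposition~\ref{Prop_UnifConvergenceOfDsU} into Lemma~\ref{Lem_EstimOnS}. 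Invertibility of $l_\pm\colon W^{1,2}(\R;\R^{2n})\to L^2(\R;\R^{2n})$ (hypothesis~\textit{(iii)}) is nothing but hypothesis~(H\ref{Hyp_NondegOfBrokenFlowLines}) read through the isometry $\psi_\pm$. For the symmetry hypothesis~\textit{(a)}: because $\psi_\pm$ is a parallel frame along $\gamma_\pm$, formula~\eqref{Eq_MatricesSpm} identifies $-J_0S_\pm(t)$ with the matrix of the covariant Hessian of $\re\bigl(e^{-i\theta_\nu(t)}f\bigr)$ along $\gamma_\pm$, and a Hessian is symmetric; hence $J_0S_\pm(t)$ is symmetric.

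The substantive step is hypothesis~\textit{(b)}: that $S(s,t)$ converges, uniformly in $s$, to constant symmetric matrices $H_\pm$ as $t\to\mp\infty$, with $H_-=\lim_{t\to+\infty}S_\pm(t)$ and $H_+=\lim_{t\to-\infty}S_\pm(t)$. For this I would use that $\theta_\nu(t)\equiv0$ for $|t|\ge\nu^{-1}+1$, so that $v^t=v_0=\grad f_0$ for large $|t|$; that the boundary conditions give $u(s,t)\to m_\mp$ in $C^0(\R_s)$ and $\gamma_\pm(t)\to m_\mp$ as $t\to\pm\infty$; and that $\partial_tu+v^t=J\partial_su\to0$ uniformly in $s$ as $t\to\pm\infty$ by Proposition~\ref{Prop_UnifConvergenceOfDsU}. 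Near each critical point $J$ is integrable and $f$ is, in a holomorphic chart, a nondegenerate quadratic form, so $\re f$ has a fixed nondegenerate symmetric Hessian there; transporting this into the parallel frame shows that the matrices $S(s,t)$, $S_+(t)$, $S_-(t)$ --- all assembled from $\l$, the metric, $J$ and $f$ evaluated along curves that tend to the common critical point $m_-$ (resp.\ $m_+$) --- share the limit $H_-$ (resp.\ $H_+$), which is moreover symmetric. This asymptotic analysis at the critical points, made uniform in $s$, is the part I expect to require real care; everything else is bookkeeping.

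Finally, with~\textit{(i)}--\textit{(iii)} and~\textit{(a)}--\textit{(b)} verified, the preceding lemma yields that $L\colon W^{k+1,p}(\R^2;\R^{2n})\to W^{k,p}(\R^2;\R^{2n})$ is Fredholm for every $k\in\R$ and $p>1$ with index independent of $k$ and $p$, and pulling back along $\Psi$ gives the same statement for $D_u\Sigma$, which completes the argument.
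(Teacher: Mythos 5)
Your proposal follows essentially the same route as the paper: reduce to the matrix operator $L$ via a unitary trivialisation, note that hypotheses (i)--(iii) are already in hand from the exponential-decay analysis, and then verify the symmetry conditions (a)--(b) using the integrability of $J$ near the critical points. One small slip worth flagging: in verifying (b), the limit matrix $H_\pm$ is not (a transport of) $\Hess(\re f)$ but rather $J\nabla v_0=\nabla v_1=\Hess(\im f)$ at $m_\pm$ --- the zeroth-order term of $D_u\Sigma$ in~\eqref{Eq_LinerizationOfHolPlane} is $\cos\theta_\nu\,\nabla_\xi v_1-\sin\theta_\nu\,\nabla_\xi v_0+\nabla_\xi J(\partial_t u)$, which tends to $\nabla_\xi v_1$ as $\theta_\nu\to0$ and $\partial_tu\to0$; both matrices are of course symmetric, so the conclusion is unaffected. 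You should also be a little more careful about the assertion that $\psi_\pm$ is parallel along $\gamma_\pm$: the construction of Remark~\ref{Rem_UnitaryTrivialization} makes $\psi_-$ parallel along $\gamma_-$ but $\psi_+$ is obtained by further parallel transport along the $\beta_t$-curves, so one should either modify the trivialisation near $s=-\infty$ to make $\psi_+$ parallel too, or appeal to the fact that the Fredholm property of $L$ is insensitive to a change of unitary trivialisation.
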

\begin{proof}
Clearly, it is enough to check that the matrix--valued function $S(s,t)$ given by~\eqref{Eq_MatrixS} satisfies the hypotheses of Lemma~\ref{Lem_UniformlyEllipticOpAreFredholm}. The fact that \textit{(a)} holds can be checked by direct computation using~\eqref{Eq_MatricesSpm} and is well known~\cite{Salamon:97_LecturesOnFloerHomology}. To see that \textit{(b)} holds, observe that $v^t=v_0$ for $|t|$ large enough. It follows that $H_\pm$ represents $J\nabla v_0=\nabla v_1$ at $m_\pm$. Here we used the fact, that $J$ is integrable in a neighbourhood of $m_\pm$. It remains to notice that $\nabla v_1$ is the Hessian of $f_1=\im f$ at $m_\pm$  and therefore is symmetric. 
\end{proof}



To compute the index of $D_u\Sigma$ we need some preparation. Since $\Ind{(D_u\Sigma)}$ depends neither on $k$ nor on $p$, we can put  $k=1,\ p=2$.  With an arbitrary $C^1$--curve $\gamma\colon\mathbb R\rightarrow M$ satisfying %
 \begin{equation}\label{Eq_AuxAsymptoticConditionsForCurves}
   \lim\limits_{t\to\pm\infty}\gamma (t)=m_\mp\quad\text{and }\  \lim\limits_{t\to\pm\infty}\dot\gamma (t)=0
 \end{equation}
we associate a pair of Lagrangian subspaces in $T_{\gamma(0)}M$ as follows. Consider the operator%
\begin{equation*}
   A\colon C^\infty (\gamma^*TM)\rightarrow C^\infty (\gamma^*TM),\qquad A\xi =J\cd t\xi + \tilde S\xi,
\end{equation*}
where $\tilde S$ is a zero-order operator, namely $\tilde S\xi = \cd \xi\tilde v^t + (\nabla_\xi J)\dot\gamma, \ \tilde v^t= \cos\theta_\nu(t)v_1 -\sin\theta_\nu(t) v_0$  (compare with~\eqref{Eq_LinerizationOfHolPlane}). Notice that $\lim\limits_{t\to\mp\infty} \tilde S = \tilde S^\pm\in End(T_{m_\pm}M)$  
is the Hessian of $\im f$ at $m_\pm$. As we already observed in the proof of Theorem~\ref{Thm_LinearizationIsFredholm} $J\tilde S^\pm$ is then the Hessian of $\re f$ and therefore is a non-degenerate  self-adjoint endomorphism with vanishing signature (i.e., $J\tilde S^\pm$ has $n$ positive and $n$ negative eigenvalues).


Denote by $\xi_{\rm v},\ \rm v\in T_{\gamma(0)}M$, a solution of the Cauchy problem 
$A\xi_{\rm v}=0,\ \xi_{\rm v}(0)=\rm v$ and put 
\begin{equation*}
  \Lambda^\pm =\{ \, \rm v\in  T_{\gamma(0)}M\ \; |\;  \lim\limits_{t\to\mp\infty} \xi_{\rm v}(t) =0 \}. 
\end{equation*}
Then $\Lambda^\pm$ are Lagrangian subspaces. Indeed, a straightforward computation shows that $\om (\xi_{\rm v}(t), \xi_{\rm w}(t))$ does not depend on $t$ for any $\rm{v,w}\in T_{\gamma(0)}M$. Therefore, if $\rm{v,w}\in\Lambda^+$, then $\om(\rm v, \rm w)=0$ since $\om (\xi_{\rm v}(t), \xi_{\rm w}(t))$ vanishes at $-\infty$. Besides, $\dim\Lambda^+=n$ since  the signature of $J\tilde S^+$ vanishes.

\begin{rem}\label{Rem_CrossingsCoincide}
 If $\rm v\in\Lambda^{\pm}$, then $\xi_{\rm v}$ decays exponentially fast at $\mp\infty$ since $J\hat S^\pm$ is nondegenerate and self-adjoint. Hence, the kernel of the operator $A\colon W^{1,2}(\gamma^* TM)\rightarrow L^2(\gamma^*TM)$ can be identified with $\Lambda^+\cap\Lambda^-$. In particular, $\ker A$  is nontrivial if and only if $\Lambda^+\cap\Lambda^-\not =\{ 0 \}$.
\end{rem}


  Further, pick any two curves $\gamma_\pm$ satisfying~(\ref{Eq_AuxAsymptoticConditionsForCurves}) such that the associated pairs of Lagrangian subspaces are transverse.  Let $u\colon\mathbb R^2\rightarrow M$ be any $C^1$--map such that each curve $\gamma_s(t)= u(s,t)$ also satisfies~(\ref{Eq_AuxAsymptoticConditionsForCurves}) and $\gamma_s\to\gamma_\pm$ as $s\to\mp\infty$ in the $C^1$-topology. With the help of the relative Maslov index for Lagrangian pairs~\cite{RobbinSalamon:93_MaslovIndexForPaths} we associate with the triple $(\gamma^+,\gamma^-; u)$ an integer $\mu(\gamma^+,\gamma^-; u)$, which is referred to as  the \emph{relative Maslov index}. To define $\mu(\gamma^+,\gamma^-; u)$ denote by $\mathcal L (TM)$ the Lagrangian Grassmannian bundle and put $\beta_0 (s)=\gamma_s(0)=u(s,0)$. Then we obtain a pair of sections $(\Lambda^+, \Lambda^-)$ of the bundle $\beta_0^*\mathcal L(TM)$ such that the subspaces $\Lambda^+(s)$ and $\Lambda^-(s)$ are transverse for $s=\pm\infty$.  Choose a unitary trivialization of $\beta_0^*TM$ and represent $\Lambda^\pm$ by a pair of curves $\Lambda^\pm_0\colon\mathbb R\rightarrow\mathcal L(\mathbb R^{2n})$. It is said that a crossing, i.e., a point $s_0$ such that $\Lambda^+(s_0)\cap\Lambda^-(s_0)\not =0$,  is \emph{regular} if  the associated crossing form~\cite{RobbinSalamon:93_MaslovIndexForPaths} $\Gamma(\Lambda^+,\Lambda^-, s_0)\colon \Lambda^+(s_0)\cap\Lambda^-(s_0)\rightarrow \mathbb R$ is nondegenerate. If all crossings are regular, then the number%
\begin{equation*}
 \mu(\gamma^+,\gamma^-; u)=\mu(\Lambda^+_0, \Lambda^-_0)=%
\sum\limits_{s_0\ \text{is crossing}} \sign\Gamma(\Lambda^+,\Lambda^-, s_0)\ \in\mathbb Z
\end{equation*}
does not depend on the choice of the unitary trivialization, i.e. the relative Maslov index is well-defined.

\begin{prop}
 With the same notations as in Theorem~\ref{Thm_LinearizationIsFredholm}, the index of $D_u\Sigma$ is given by
 \begin{equation*}
  \Ind(D_u\Sigma)=\mu(\gamma^+,\gamma^-; u).
 \end{equation*}
\end{prop}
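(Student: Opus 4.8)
The plan is to put $D_u\Sigma$ into the standard form of a Cauchy--Riemann-type operator along $u$ and then to apply the index theorem of Robbin--Salamon~\cite{RobbinSalamon:93_MaslovIndexForPaths} identifying its Fredholm index with a Maslov index. Since $\Ind(D_u\Sigma)$ is independent of $k$ and $p$, fix $k=1$, $p=2$. In the unitary trivialisation $\Psi$ the operator becomes $L=\partial_s+J_0\partial_t+S(s,t)$ with $S$ as in~\eqref{Eq_MatrixS}; the point is that for each fixed $s$ the ``$t$--operator'' $J_0\partial_t+S(s,\cdot)$ differs only by a zeroth--order term vanishing as $s\to\pm\infty$ (coming from the $s$--dependence of the trivialisation) from the operator $A$ attached above to the curve $\gamma_s=u(s,\cdot)$; this is the comparison of~\eqref{Eq_MatrixS} and~\eqref{Eq_LinerizationOfHolPlane} with the definition of $A$. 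In particular $J_0\partial_t+S(s,\cdot)\to A_\pm=J_0l_\pm$ as $s\to\mp\infty$, and $A_\pm$ are invertible by~(H\ref{Hyp_NondegOfBrokenFlowLines}).

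First I would perform a preliminary reduction to a self--adjoint normal form: through the obvious linear homotopy I subtract the zeroth--order discrepancy above, replacing $J_0\partial_t+S(s,\cdot)$ by the operator $A(s)$ genuinely attached to $\gamma_s$. For this operator the equation $A(s)\xi=0$ preserves $\om$ (as observed when the subspaces $\Lambda^\pm$ were introduced), so $A(s)$ is self--adjoint in a parallel unitary frame along $\gamma_s$; it is moreover Fredholm (of index zero) because its asymptotic operators in $t$ are the invertible constant operators built from the Hessians $H_\pm$ of $\re f$ at $m_\pm$ (cf.\ the proof of Theorem~\ref{Thm_LinearizationIsFredholm}). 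The homotopy fixes all limit operators, so it stays Fredholm and does not change $\Ind(D_u\Sigma)$; it also leaves untouched the path $s\mapsto(\Lambda^+(s),\Lambda^-(s))$ of Lagrangian pairs, which depends only on $u$. After this reduction it suffices to compute the index of $\partial_s+A(s)$, where $\{A(s)\}_{s\in\R}$ is a continuous path of self--adjoint Fredholm operators running between the invertible operators $A_\pm$, and by Remark~\ref{Rem_CrossingsCoincide} the evaluation $\xi\mapsto\xi(0)$ identifies $\ker A(s)$ with $\Lambda^+(s)\cap\Lambda^-(s)$.

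Now I would invoke the two classical identifications. For a Cauchy--Riemann-type operator $\partial_s+A(s)$ with self--adjoint Fredholm slices converging exponentially to invertible limits and with $\partial_sA(s)\to0$ as $s\to\pm\infty$ --- all of which is supplied here by Corollary~\ref{Cor_AprioriEstimForCkNorms}, Lemma~\ref{Lem_EstimOnS}, Proposition~\ref{Prop_UnifConvergenceOfDsU} and the proof of Theorem~\ref{Thm_ExpDecay} --- the Fredholm index equals the spectral flow of $\{A(s)\}$. After a generic compactly supported perturbation of the family, harmless on both sides, all crossings $s_0$ (the finitely many $s$ with $\ker A(s)\neq0$) are regular, and the spectral flow becomes the signed count of crossings, the sign at $s_0$ being the signature of the spectral crossing form got by differentiating $A(s)\xi=0$ in $s$ and pairing the outcome with $\ker A(s_0)$ by means of $\om$. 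Since the crossings of $\{A(s)\}$ are, under $\xi\mapsto\xi(0)$, exactly the crossings of the Lagrangian path $s\mapsto(\Lambda^+(s),\Lambda^-(s))$ in $\beta_0^*\mathcal L(TM)$, and since this spectral crossing form is computed by the very recipe defining the Robbin--Salamon crossing form $\Gamma(\Lambda^+,\Lambda^-,s_0)$ --- differentiate the defining first-order equation in $s$, contract with the kernel using the symplectic form --- the two signed counts coincide. This yields $\Ind(D_u\Sigma)=\sum_{s_0}\sign\Gamma(\Lambda^+,\Lambda^-,s_0)=\mu(\gamma^+,\gamma^-;u)$.

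The genuinely delicate step, I expect, is the sign bookkeeping of the last paragraph: tracking the isomorphism $\ker A(s_0)\cong\Lambda^+(s_0)\cap\Lambda^-(s_0)$ through the trivialisations and the preliminary homotopy so that the spectral and Lagrangian crossing forms match with the \emph{same} sign under the conventions fixed for $\mu$ in this section --- not merely up to an overall $\pm1$. A second, more routine obstacle is checking that the a~priori estimates of the previous subsections really do supply all of Robbin--Salamon's hypotheses in the present setting, where $t$ ranges over the whole line rather than a circle, in particular the exponential convergence of $A(s)$ and of $\partial_sA(s)$ as $s\to\pm\infty$ on which the index-to-spectral-flow step rests.
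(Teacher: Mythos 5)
Your proposal follows essentially the same route as the paper: pass to a unitary trivialisation to write $D_u\Sigma$ as $L=\partial_s+A(s)$, invoke the Atiyah--Patodi--Singer identification of $\Ind L$ with the spectral flow of $\{A(s)\}$, use Remark~\ref{Rem_CrossingsCoincide} to match crossings of $\{A(s)\}$ with crossings of the Lagrangian path $(\Lambda^+,\Lambda^-)$, and compare the two crossing forms. The one substantive gap is exactly the one you flag yourself: the crossing-form comparison. In the paper this is not a matter of sign conventions to be chased through the trivialisation but is the actual computational content of the proposition. It is carried out via the solution operator $\Xi(s,t)$ of $J_0\partial_t+S(s,t)$, the symplectic identity $\langle\Xi\xi_0,\partial_sS\,\Xi\xi_0\rangle=\partial_t\,\om_0(\Xi\xi_0,\partial_s\Xi\xi_0)$, and the explicit characterisation of $\partial_s\xi^\pm(s_0)$ for curves $\xi_0+\xi^\pm(s)$ remaining in $\Lambda^\mp(s)$; combining~\eqref{Eq_CrossingFormForFamily}--\eqref{Eq_AuxForCrossigFormOfLagrPair2} then gives the exact equality $\Gamma(\Lambda^+,\Lambda^-,s_0)\xi_0=\Gamma(A,s_0)\xi_0$ at each crossing, not merely agreement up to an overall sign. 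Without this computation you have the shape of the argument but not a proof.

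A secondary point concerns your preliminary ``linear homotopy.'' The paper handles the normalisation differently: it first perturbs $u$ to a $C^1$--close $\hat u$ with $\hat u(s,\pm t)=m_\mp$ for $t\ge T$, so that $\hat S(s,\pm t)=H_\mp$ exactly for large $|t|$, and then makes $\hat S$ symmetric by a compact perturbation. The exact constancy in $t$ outside a compact region is used in two places that a homotopy of the $t$--operator alone would not automatically provide: it makes $J_0H_\pm-\mu$ literally the asymptotic operators of $A(s)-\mu$ (so that the spectrum of $A(s)$ in $(-\mu_0,\mu_0)$ is purely eigenvalues, which is what justifies the spectral-flow step), and it gives the explicit exponential form $\Xi(s,t)(\xi_0+\xi^-(s))=\sum_j c_j(s)e^{\l_jt}$ for $t\le -T$, which is needed to pass the derivative $\partial_s$ through the $t\to\mp\infty$ limits in~\eqref{Eq_AuxForCrossigFormOfLagrPair1}--\eqref{Eq_AuxForCrossigFormOfLagrPair2}. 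Your deformation to the operator attached to $\gamma_s$ would need to be supplemented by a similar flattening of the asymptotics in $t$ before the crossing-form computation can be run.
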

\begin{proof}
 We follow the line of argument in~\cite{Salamon:97_LecturesOnFloerHomology}.
 
 Choose a $C^1$--small  perturbation $\hat u$ of the map $u$ with the following properties: $D_{\hat u}\Sigma$ is Fredholm, %
 $\Ind (D_{\hat u}\Sigma)=\Ind (D_u\Sigma)$, the Lagrangian pairs associated with the curves $\hat u(\pm\infty, t)$ are transverse, and there exists $T>0$ such that $\hat u(s,\pm t)=m_\mp$ for all $t\ge T$. Construct also a unitary trivialization of $\hat u^*TM$ as described in Remark~\ref{Rem_UnitaryTrivialization}. Write $D_{\hat u}\Sigma$ in the form %
 \begin{equation*}
   L(\xi)=\partial_s\xi+ A(s)\xi,\qquad \xi\colon\mathbb R^2\rightarrow \mathbb R^{2n},
 \end{equation*}
 where $A(s)\xi= J_0\partial_t\xi +\hat S(s,t)\xi$. Since the limits of the matrix $S(s,t)$ associated with $u$ are symmetric, up to a compact perturbation we can also assume that matrix $\hat S(s,t)$ is symmetric for all $(s,t)$. By the choice of $\hat u$ we also  have $\hat S(s,\pm t)=H_\mp$ for $t\ge T$, where $H_\mp$ represents the Hessian of $\im f$ at $m_\mp$.

Further, denote $\mu_0=\min\{ |\mu|\, :\, \ker (J_0H_\pm-\mu)\not =0\, \}>0$ and consider $A(s)$ as an unbounded operator in $L^2(\mathbb R;\mathbb R^{2n})$ with the domain $W^{1,2}(\mathbb R;\mathbb R^{2n})$. Then for all $s\in\mathbb R$ any point of the spectrum of $A(s)$ from the interval $(-\mu_0, \mu_0)$ is an eigenvalue. Indeed, for any $(s,\mu)\in \mathbb R\times (-\mu_0, \mu_0)$ the operator %
$A(s)-\mu\colon W^{1,2}(\mathbb R;\mathbb R^{2n})\rightarrow L^2(\mathbb R;\mathbb R^{2n})$ is Fredholm, since $J_0H_{\pm}-\mu$ is nondegenerate. Moreover, $\Ind(A(s)-\mu)=\Ind A(s)=\Ind(-J_0A(s))=0$ since the indices of $-J_0A(+\infty)$ and $D_{\gamma_-}\sigma$ coincide. Hence, if $\mu$ is not an eigenvalue of $A(s)$, then $A(s)-\mu\colon W^{1,2}(\mathbb R;\mathbb R^{2n})\rightarrow L^2(\mathbb R;\mathbb R^{2n})$ is bijective and therefore $\mu$ belongs to the resolvent set of $A(s)$.

From the above observation follows~\cite{AtiyahPatodiSinger:76_SpectralAsymmetryIII} that the index of $L$ can be computed with the help of the spectral flow of $A(s)$. Namely, a point $s_0$ is said to be a regular crossing of the family $A(s)$ if $\ker A(s_0)\not =0$ and the crossing form
\begin{equation*}
 \Gamma(A, s_0)\xi=\langle \xi, (\partial_s A) \xi \rangle_{L^2}=%
    \langle \xi, \partial_s S(s_0,\cdot) \xi \rangle_{L^2},\qquad \xi\in\ker A(s_0)
\end{equation*}
is nondegenerate. Then, if $A(s)$ has only regular intersection points, we have: $\Ind L=\sum_{s_0}\sign\Gamma (A,s_0)$, where the summation runs over all crossings $s_0$.

It follows from Remark~\ref{Rem_CrossingsCoincide} that crossings of $(\Lambda^+_0,\Lambda^-_0)$ and  $A(\cdot)$ coincide. Therefore to complete the proof it suffices to show that under the natural identification $\Lambda^+_0(s_0)\cap\Lambda^-_0(s_0)\cong\ker A(s_0)$ the associated crossing forms coincide at each crossing $s_0$ (we can assume that only regular crossings occur). 

Let $\Xi(s,t)$ be the solution operator of $A(s)$, i.e., $\Xi(s,t)$ is a square matrix of dimension $2n$ satisfying
\begin{equation*}
 J_0\partial_t \Xi +S(s,t)\Xi=0,\quad \Xi(s,0)=\mathbbm 1.
\end{equation*}
Since $S(s,t)$ is symmetric, $\Xi(s,t)\in Sp(2n;\mathbb R)$ for all $(s,t)$. From the equality%
\begin{equation*}
 \partial_t(\Xi^TJ_0\partial_s\Xi)=-(\Xi^TSJ_0) J_0\partial_s\Xi +\Xi^TJ_0\partial_s(J_0S\Xi)=%
  - \Xi^T\partial_sS\,\Xi
\end{equation*}
we obtain $ \bigl \langle \Xi\xi_0,\, \partial_sS\Xi \xi_0 \bigr \rangle =  -\partial_t\ \bigl \langle \Xi\xi_0,\, J_0\partial_s\Xi\xi_0 \bigr \rangle = %
 \partial_t\,\om_0\bigl (\Xi\xi_0,\,\partial_s\Xi\xi_0\bigr )$, where $\xi_0\in\mathbb R^{2n}$. 
 Hence, for any crossing $s_0$ and any $\xi_0\in \Lambda^+_0(s_0)\cap\Lambda^-_0(s_0)$ we have:%
\begin{equation}\label{Eq_CrossingFormForFamily}
 \begin{aligned}
   \Gamma(A, s_0)\xi_0 &=%
      \int_{-\infty}^{+\infty} \bigl \langle \Xi(s_0,t)\xi_0,\, \partial_sS(s_0, t)\Xi(s_0,t)\xi_0 \bigr \rangle\, dt\\%
      &=\lim\limits_{t\to +\infty} \om_0\bigl (\Xi(s_0,t)\xi_0,\,\partial_s\Xi(s_0, t)\xi_0\bigr ) - 
           \lim\limits_{t\to -\infty} \om_0\bigl (\Xi(s_0, t)\xi_0,\,\partial_s\Xi(s_0, t)\xi_0\bigr ).
  \end{aligned}
\end{equation}

On the other hand, for $\xi_0$ as above and for all $s$ from a sufficiently small neighbourhood of $s_0$ there exists $\xi^-(s)\in\Lambda^-(s_0)$ such that $\xi_0+\xi^-(s)\in\Lambda^+(s)$, i.e.,
\begin{equation*}
 \lim\limits_{t\to +\infty}\Xi(s_0, t)\xi^{-}(s)=0\quad \text{and}\quad  \lim\limits_{t\to -\infty}\Xi(s, t)(\xi_0+\xi^{-}(s))=0.
\end{equation*}
For $t\le -T$ we must have $\Xi(s, t)(\xi_0+\xi^{-}(s))=\sum_{j=1}^n c_j(s)e^{\l_jt}$, where $\l_1,\dots,\l_n$ are positive eigenvalues of the matrix $J_0H_+$. Hence, $ \partial_s\Xi(s_0, t)\xi_0 + \Xi(s_0, t) \partial_s\xi^-(s_0)\to 0$ as $t\to -\infty$ and this in turn implies
\begin{equation}\label{Eq_AuxForCrossigFormOfLagrPair1}
  \begin{aligned}
     \om_0\bigl (\xi_0, \partial_s\xi^-(s_0)\bigr ) &=\om_0\bigl (  \Xi(s_0, t)\xi_0,  \Xi(s_0, t)\partial_s\xi^-(s_0) \bigr )\\%
         &=-\lim\limits_{t\to -\infty} \om_0 \bigl (  \Xi(s_0, t)\xi_0,  \partial_s\Xi(s_0, t)\xi_0 \bigr ).
     \end{aligned}
\end{equation} 
Similarly, there also exists $\xi^+(s)\in\Lambda^+(s_0)$ for all $s$ sufficiently close to $s_0$ such that $\xi_0+\xi^+(s)\in\Lambda^-(s)$. Arguing as above, wee see that %
\begin{equation}\label{Eq_AuxForCrossigFormOfLagrPair2}
 \om_0\bigl (\xi_0, \partial_s\xi^+(s_0)\bigr )=%
     -\lim\limits_{t\to +\infty} \om_0 \bigl (  \Xi(s_0, t)\xi_0,  \partial_s\Xi(s_0, t)\xi_0 \bigr ).
\end{equation} 
Since by definition %
$\Gamma(\Lambda^+,\Lambda^-, s_0)\xi_0 =\om_0\bigl (\xi_0, \partial_s\xi^-(s_0)\bigr ) - \om_0\bigl (\xi_0, \partial_s\xi^+(s_0)\bigr )$, 
combining (\ref{Eq_CrossingFormForFamily})-(\ref{Eq_AuxForCrossigFormOfLagrPair2})  we finally obtain $\Gamma(\Lambda^+,\Lambda^-, s_0)\xi_0=\Gamma(A, s_0)\xi_0$. This finishes the proof.  
\end{proof}

For any $p>2$ consider the space
\begin{align*}
\mathcal B=\bigl\{
 u\in W^{1,p}_{loc}(\R^2; M)\mid\  & \exists\; R>0, \xi_\pm\in W^{1,p}(\gamma_\pm^*TM),\text{ and } \eta_\pm\in W^{1,p}(\R^2; T_{m_\pm}M)\bigr .\\
& \text{ s.t. }\bigl .u=\exp_{\gamma_\pm}\xi_\pm \text{ for } \mp s>R\text{ and } u=\exp_{m_\pm}\eta_\pm \text{ for } \mp t>R \bigr\}.
\end{align*}
One can construct an atlas on $\mathcal B$ similarly to~\cite[Theorem~3]{Floer:88_UnregGradientFlow}.  Thus $\mathcal B$ is a Banach manifold. Observe that for $u\in\mathcal B$ we have $T_u\mathcal B=W^{1,p}(\R^2; u^*TM)$.

Let $\mathcal F\rightarrow \mathcal B$ be the vector bundle with the fiber $\mathcal F_u=L^p(\R^2; u^*TM)$. Then the map $\Sigma$ can be interpreted as a section of $\mathcal F$. Clearly, any solution of $\Sigma(u)=0$ is a smooth map. By Corollary~\ref{Cor_FiniteEnergyImplyBCs} the zero locus of $\Sigma$ coincides with $\mathcal M(\gamma_-;\gamma_+)$. Notice also that the covariant derivative of $\Sigma$ at the point $u$ can be identified with the map
\[\label{Eq_AuxDuSigmaBanachSpaces}
D_u\Sigma\colon W^{1,p}(\R^2; u^*TM)\rightarrow L^p(\R^2; u^*TM),
\] 
which is Fredholm. We summarise these observations in the following proposition.
\begin{prop}
  The zero locus of $\Sigma\in\Gamma(\mathcal B;\mathcal F)$ is the space of solutions of~\eqref{Eq_PseudoholomPlaneWithHamPert_mod}--\eqref{Eq_PseudoholomPlaneWithHamPert_BC_s}. Moreover, for each zero $u$ the covariant derivative $D_u\Sigma$ is Fredholm.
\end{prop}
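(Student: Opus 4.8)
The plan is to assemble the results already established in this section; no new estimate is needed.

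First I would record that $\Sigma$ really is a section of $\mathcal F$. For $u\in\mathcal B$ the tangent space is $T_u\mathcal B=W^{1,p}(\R^2;u^*TM)$, and since the time-dependent vector field $v^t=\cos\theta_\nu(t)\,v_0+\sin\theta_\nu(t)\,v_1$ is built from the fixed gradient fields of $f$ with $|v^t|^2=\rho$ bounded, the assignment $u\mapsto\partial_su+J(\partial_tu+v^t(u))$ takes values in $L^p(\R^2;u^*TM)=\mathcal F_u$ and depends smoothly on $u$ in the charts of $\mathcal B$ constructed as in \cite[Theorem~3]{Floer:88_UnregGradientFlow}. Next, by standard elliptic bootstrapping for the perturbed Cauchy--Riemann equation \eqref{Eq_PseudoholomPlaneWithHamPert_mod} with smooth coefficients, every solution of $\Sigma(u)=0$ is smooth.

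Then I would identify the zero locus. If $u\in\mathcal B$ solves $\Sigma(u)=0$, i.e. \eqref{Eq_PseudoholomPlaneWithHamPert_mod}, then by the very definition of $\mathcal B$ one has $\partial_su\in W^{1,p}(\R^2;u^*TM)$, so $E_p(u)<\infty$; Corollary~\ref{Cor_FiniteEnergyImplyBCs} then supplies the remaining asymptotic conditions \eqref{Eq_PseudoholomPlaneWithHamPert_BC_t}--\eqref{Eq_PseudoholomPlaneWithHamPert_BC_s}, so $u\in\mathcal M(\gamma_-;\gamma_+)$. Conversely, any solution of \eqref{Eq_PseudoholomPlaneWithHamPert_mod}--\eqref{Eq_PseudoholomPlaneWithHamPert_BC_s} lies in $\mathcal B$: the $C^k$--bounds of Corollary~\ref{Cor_AprioriEstimForCkNorms}, the exponential decay of $\partial_su$ in $|s|$ from Theorem~\ref{Thm_ExpDecay}, together with the convergence $u(s,\cdot)\to\gamma_\mp$ in the $s$--direction and $u(\cdot,t)\to m_\mp$ in the $t$--direction, provide (via $\exp_{\gamma_\pm}$ and $\exp_{m_\pm}$) the required $W^{1,p}$--sections $\xi_\pm$ and $\eta_\pm$. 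Hence the zero locus of $\Sigma$ equals $\mathcal M(\gamma_-;\gamma_+)$, the solution set of \eqref{Eq_PseudoholomPlaneWithHamPert_mod}--\eqref{Eq_PseudoholomPlaneWithHamPert_BC_s}.

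Finally, for a zero $u$ the covariant derivative $D_u\Sigma\colon W^{1,p}(\R^2;u^*TM)\to L^p(\R^2;u^*TM)$ is the operator computed in \eqref{Eq_LinerizationOfHolPlane}, and Theorem~\ref{Thm_LinearizationIsFredholm} (applied with $k=0$) shows it is Fredholm with index independent of $p$. The only point where I would be mildly careful is that the abstract covariant derivative of the section $\Sigma$ on the Banach manifold $\mathcal B$ agrees with this concrete differential operator; working in the geodesic normal coordinates built into the $\exp$--charts of $\mathcal B$, the discrepancy consists of lower-order connection terms, which are irrelevant for the Fredholm property. I do not expect any genuine obstacle here: the proposition is a summary of Corollary~\ref{Cor_FiniteEnergyImplyBCs}, Theorem~\ref{Thm_ExpDecay}, Corollary~\ref{Cor_AprioriEstimForCkNorms} and Theorem~\ref{Thm_LinearizationIsFredholm}.
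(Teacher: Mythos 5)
Your proof is correct and follows the same route as the paper, which likewise treats this proposition as a summary of the preceding results (Corollary~\ref{Cor_FiniteEnergyImplyBCs}, Theorem~\ref{Thm_ExpDecay}, Corollaries~\ref{Cor_AprioriEstimForCkNorms}--\ref{Cor_ApriooriWkpBounds}, and Theorem~\ref{Thm_LinearizationIsFredholm}); you are in fact somewhat more explicit than the text about the converse inclusion (that every solution of~\eqref{Eq_PseudoholomPlaneWithHamPert_mod}--\eqref{Eq_PseudoholomPlaneWithHamPert_BC_s} lands in $\mathcal B$) and about why the abstract covariant derivative agrees with the operator in~\eqref{Eq_LinerizationOfHolPlane}. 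One small slip: the definition of $\mathcal B$ gives $\partial_su\in L^p(\R^2;u^*TM)$, not $W^{1,p}$ as you wrote, but $L^p$ is exactly what is needed for $E_p(u)<\infty$, so the argument stands.
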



\section{A gauge theory on 5-manifolds}\label{Sect_5dGaugeTheory}

Let $E$ be a five-dimensional oriented Euclidean vector space with a preferred vector $\mathrm v\in E$ of unit norm. Let $\eta (\cdot)=\langle\mathrm v, \cdot\rangle$ denote the corresponding 1-form. Then the linear map%
\begin{equation*}
 T_\eta\colon\Lambda^2 E^*\longrightarrow \Lambda^2 E^*,\quad \om\mapsto *(\om\wedge\eta)
\end{equation*}
has three eigenvalues $\{ -1,0, +1 \}$ and the space $\Lambda^2 E^*$ decomposes as the direct sum of the corresponding eigenspaces:%
\begin{equation*}
 \Lambda^2 E^*\cong \Lambda_-^2 E^*\oplus \Lambda_0^2 E^*\oplus \Lambda_+^2 E^*.
\end{equation*}
Indeed, denote by $H$ the orthogonal complement of $\mathrm v$. Then $\Lambda^2E^*\cong\Lambda^2 H^*\oplus H^*$ and one easily checks that the following subspaces $\Lambda^2_\pm H^*$ and $H^*$ are eigenspaces of $T_\eta$, where $\Lambda^2_\pm H^*$ denote the eigenspaces of the four-dimensional Hodge star operator. In other words, 
$\Lambda^2_\pm  E^*\cong\Lambda^2_\pm H^*$ and $\Lambda^2_0 E^*\cong H^*$. 

Identify the Clifford algebra of $E$ with $\Lambda E$ and recall the following description of the Clifford multiplication%
   \begin{align*}
       &Cl\colon E^*\otimes\Lambda E^*\longrightarrow\Lambda E^*, \quad  & Cl=Cl'+Cl'',\\
       &Cl'\colon E^*\otimes\Lambda^p E^*\cong E\otimes\Lambda^p E^*\xrightarrow{\ c\ }\Lambda^{p-1} E^*, & c(e\otimes\om)=-\imath_e\om,\\
       &Cl''\colon E^*\otimes\Lambda^p E^*\xrightarrow{\ \cdot\,\wedge\,\cdot}\Lambda^{p+1} E^*. &
   \end{align*}
In particular, by restriction we get a map $ Cl'\colon E^*\otimes\Lambda^2_+E^*\longrightarrow E^*$, which is essentially the four-dimensional homomorphism $H^*\otimes\Lambda^2_+ H^*\longrightarrow H^*$. 

Observe that $\Lambda^2_+ H^*$ has a natural structure of a Lie algebra as a three-dimensional oriented Euclidean vector space. For an arbitrary Lie algebra $\mathfrak g$ denote $V=\Lambda^2_+ H^*\otimes\mathfrak g$ and consider the linear map $\sigma\colon V\otimes V\rightarrow V,\ \sigma=\tfrac 12 [\cdot\, ,\cdot]_{\Lambda^2_+H^*}\otimes [\cdot\, ,\cdot]_{\mathfrak g}$. 
Choosing a Lie algebra isomorphism $\Lambda^2_+H^*\cong\mathbb R^3$, for  $\xi=e_1\otimes\xi_1+e_2\otimes\xi_2+e_3\otimes\xi_3$ we obtain %
\begin{equation*}
 \sigma(\xi,\xi)= e_1\otimes [\xi_2,\xi_3]+e_2\otimes [\xi_3, \xi_1]+ e_3\otimes [\xi_1, \xi_2].
\end{equation*}
\smallskip

Let $(W^5, g)$ be an arbitrary oriented Riemannian five-manifold with a preferred vector field $v$ of pointwise unit norm. Denote $\eta(\cdot)=g(v,\cdot)\in\Om^1(W)$ and $\mathcal H=\mathrm{ker}\, \eta\subset TW$. As described above, we have the following splittings:%
\begin{align*}
   \Om^1(W) &=\Om^1_h(W)\oplus \Om^0(W)\eta,\qquad  \Om^1_h(W)=\Gamma(\mathcal H^*),\\
   \Om^2(W) &=\Om^2_-(W)\oplus\Om^2_0(W)\oplus\Om^2_+(W).
\end{align*}
Let $P\rightarrow W$ be a principal $G$-bundle, where $G$ is a compact Lie group. Denote by $\mathcal A(P)$ the space of connections on $P$ and by $ad\, P$ the adjoint bundle of Lie algebras. Consider the following equations for a pair $(A,B)\in\mathcal A(P)\times\Om^2_+(ad\, P)=\mathcal B$:
\begin{equation}\label{Eq_Main}  
 \begin{aligned}
   &\imath_vF_A-\delta_A^+\, B=0,\\
   & F_A^+-\nabla^A_v\,B-\sigma(B,B)=0,
 \end{aligned}
\end{equation}
where the operator $\delta_A^+\colon\Om^2_+ (ad\, P)\rightarrow\Om^1_h(ad\, P)$ is defined by the composition%
\begin{equation*}
   \delta_A^+\colon\Gamma(\Lambda^2_+\mathcal H^*\otimes ad\, P)\xrightarrow{\ \nabla^{LC,\, A}\ }%
           \Gamma(T^*W\otimes \Lambda^2_+\mathcal H^*\otimes ad\, P)\xrightarrow{\ Cl'\otimes\, id\ }\Gamma(\mathcal H^*\otimes ad\, P).
\end{equation*}
Here $\nabla^{LC,\, A}$ denotes the tensor product of $A$ and the connection on $\Lambda^2_+\mathcal H^*$ induced by the Levi-Civita connection (we do not assume that $\Lambda^2_+\mathcal H^*$ is preserved by the Levi-Civita connection). It is convenient to define a map $\Phi\colon \mathcal B\rightarrow \Om^1_h(ad\, P)\times \Om^2_+(ad\, P)$ by the left hand side of equations (\ref{Eq_Main}).

\begin{rem}
 Equations~\eqref{Eq_Main} were independently discovered by Witten~\cite{Witten:12_FivebranesAndKnots_QT} from a different perspective. A partial case with $B\equiv 0$ has been studied by Fan~\cite{Fan:96}.
\end{rem}

\begin{rem}\label{Rem_5dEqnsAsSpin7Instantons}
 The total space of $\Lambda^2_+\mathcal H\rightarrow W$ is an eight-manifold equipped with a natural $Spin(7)$-structure, which is induced by the Riemannian metric and orientation on $W$. This $Spin(7)$-structure can be constructed using the technique of~\cite{BryantSalamon:89}. Then, following the line of argument in~\cite{Haydys:09_GaugeTheory_jlms}, one can show  that solutions of equations (\ref{Eq_Main})  correspond to $Spin(7)$-instantons on $\Lambda^2_+\mathcal H$ invariant along each fibre.
\end{rem}

The gauge group $\mathcal G(P)$ acts on the configuration space $\mathcal B$ on the right%
\begin{equation*}
   (A,B)\cdot g=(A\cdot g, \ ad_{g^{-1}}\, B),\qquad g\in\mathcal G(P),
\end{equation*}
where $g$ acts on the first component by the usual gauge transformation. The infinitesimal action at a point $(A,B)$ is given by the map%
\begin{equation*}
   K\colon\Om^0(ad\, P)\longrightarrow \Om^1(ad\, P)\oplus\Om^2_+(ad\, P),\qquad \xi\mapsto \bigl (d_A\xi,\ [B, \xi]\bigr ).
\end{equation*}
Notice also that the map $\Phi$ is $\mathcal G(P)$-equivariant.

The standard computation yields%
\begin{equation*}
   \delta\Phi_{(A,\, B)}
      \begin{pmatrix}
       \alpha\\ \beta
      \end{pmatrix}=
   \begin{pmatrix}
       \imath_v(d_A\alpha) -\delta_A^+\,\beta +\alpha\cdot B\\
       d_A^+\alpha-\nabla^A_v\,\beta - [\alpha(v), B] -2\sigma(B,\beta)
   \end{pmatrix},
\qquad (\alpha, \beta)\in T_{(A, B)}\mathcal B,
\end{equation*}
where the term $\alpha\cdot B\in\Om^1_h(ad\, P)$ is constructed algebraically from $\alpha$ and $B$, namely $\alpha\cdot B=Cl'\otimes [\cdot\, ,\cdot]_{\mathfrak g}(\alpha\otimes B)$. Thus we get the deformation complex at the point $(A,B)$:
\begin{equation}\label{Eq_5dDeformationComplex}
   0\rightarrow\Om^0(ad\, P)\xrightarrow{\ K\ }\Om^1(ad\, P)\oplus\Om^2_+(ad\, P)\xrightarrow{\ \delta\Phi\ }%
        \Om^1_h(ad P)\oplus \Om^2_+(ad\, P)\rightarrow 0.
\end{equation}

\begin{lem}
 Deformation complex~(\ref{Eq_5dDeformationComplex}) is elliptic.
\end{lem}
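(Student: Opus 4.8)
The plan is to verify ellipticity by computing the symbol sequence of the deformation complex~\eqref{Eq_5dDeformationComplex} at an arbitrary nonzero covector $\xi \in T_w^*W$ and checking exactness. Since ellipticity is a pointwise, leading-order condition, only the principal parts of $K$ and $\delta\Phi$ enter: the algebraic terms $[B,\cdot]$, $\alpha\cdot B$, $[\alpha(v),B]$, $\sigma(B,\beta)$ are zeroth order and drop out. So the symbol complex coincides with that of the linearisation at $B\equiv 0$, which is
\[
0\rightarrow \mathfrak g \xrightarrow{\ \sigma_\xi(K)\ } (\Lambda^1 \oplus \Lambda^2_+)\otimes\mathfrak g \xrightarrow{\ \sigma_\xi(\delta\Phi)\ } (\Lambda^1_h \oplus \Lambda^2_+)\otimes\mathfrak g \rightarrow 0,
\]
with $\sigma_\xi(K)(\zeta) = (\xi\otimes\zeta,\,0)$, $\sigma_\xi(\delta\Phi)(\alpha,\beta) = \bigl(\imath_v(\xi\wedge\alpha) - \mathrm{Cl}'(\xi\otimes\beta),\ (\xi\wedge\alpha)^+ - \xi(v)\,\beta\bigr)$, tensored with $\mathrm{id}_{\mathfrak g}$. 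Hence it suffices to treat the case $\mathfrak g = \R$, i.e. to show the three-term sequence of the underlying scalar operators is exact for every $\xi\ne 0$.

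**First I would** dispose of injectivity of $\sigma_\xi(K)$: this is immediate since $\xi\otimes\zeta = 0$ forces $\zeta = 0$ for $\xi\ne 0$. **Next** I would check exactness in the middle, $\im\sigma_\xi(K) = \ker\sigma_\xi(\delta\Phi)$. The inclusion $\subseteq$ is the statement $\sigma_\xi(\delta\Phi)\circ\sigma_\xi(K) = 0$, which holds because the complex is a genuine differential complex (one should double-check $\delta\Phi\circ K = 0$, reflecting gauge invariance of $\Phi$, so the symbols compose to zero). For the reverse inclusion, suppose $(\alpha,\beta)$ is in the kernel: $\imath_v(\xi\wedge\alpha) = \mathrm{Cl}'(\xi\otimes\beta)$ and $(\xi\wedge\alpha)^+ = \xi(v)\beta$. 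The plan is to split into the two cases $\xi(v)\ne 0$ and $\xi(v) = 0$ and show $\beta$ is forced to vanish and $\alpha = c\,\xi$ in both. When $\xi(v)\ne 0$ one can solve for $\beta = \xi(v)^{-1}(\xi\wedge\alpha)^+$ and substitute into the first equation, reducing to a purely linear-algebra identity in $\Lambda^\bullet E^*$ with $E$ the model 5-space; working in the decomposition $E^* = \R\eta \oplus H^*$ and using $\Lambda^2_+E^*\cong\Lambda^2_+H^*$ this becomes a short computation with the four-dimensional Hodge star. When $\xi(v) = 0$, i.e. $\xi\in H^*$, the condition $(\xi\wedge\alpha)^+ = 0$ together with $\imath_v(\xi\wedge\alpha) = \mathrm{Cl}'(\xi\otimes\beta) = -\imath_{\xi^\sharp}\beta$ again pins down $\alpha$ and $\beta$ by an explicit computation in $\Lambda^\bullet H^*$.

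**Finally** I would check surjectivity of $\sigma_\xi(\delta\Phi)$ onto $(\Lambda^1_h\oplus\Lambda^2_+)\otimes\mathfrak g$. Rather than a direct construction, the cleanest route is a dimension count: the complex has Euler characteristic $-\dim\mathfrak g + (5+3)\dim\mathfrak g - (4+3)\dim\mathfrak g = 0$, so once injectivity of $\sigma_\xi(K)$ and middle exactness are established, the alternating-sum of dimensions forces $\dim\coker\sigma_\xi(\delta\Phi) = 0$, giving surjectivity for free. Alternatively one exhibits preimages explicitly, which is a routine but slightly tedious unwinding of the definitions of $\delta_A^+$ and $\mathrm{Cl}'$.

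**The main obstacle** is the middle-exactness computation in the case $\xi(v)=0$: there the ``$v$-direction'' that would normally make the system overdetermined-but-exact is degenerate, and one has to use the fine structure of the map $\mathrm{Cl}'\colon H^*\otimes\Lambda^2_+H^*\to H^*$ (surjective with one-dimensional kernel for each fixed nonzero covector) to see that no spurious middle cohomology appears. Everything else is formal or a short symbol calculation; this excerpt's Remark~\ref{Rem_5dEqnsAsSpin7Instantons} also suggests an alternative proof — deduce ellipticity of~\eqref{Eq_5dDeformationComplex} from ellipticity of the $Spin(7)$-instanton deformation complex on the total space of $\Lambda^2_+\mathcal H$ restricted to fibrewise-invariant data — but carrying that out rigorously would require more setup than the direct symbol computation.
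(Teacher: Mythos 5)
Your proposal takes a genuinely different route from the paper. The paper does not carry out a symbol computation at all: it invokes Remark~\ref{Rem_5dEqnsAsSpin7Instantons}, asserting that ellipticity is inherited from the $Spin(7)$-instanton deformation complex on $\Lambda^2_+\mathcal H$ under fibrewise reduction, or alternatively suggests showing that the symbol of $K^*+\delta\Phi$ is modelled on octonionic multiplication, and omits the details in both cases. Your direct verification of exactness of the symbol sequence is more elementary and self-contained; it avoids both the $Spin(7)$ machinery and the octonionic identification of $\Om^1\oplus\Om^2_+$ with $\mathbb O\otimes\mathfrak g$, at the cost of a case-by-case linear-algebra check. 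Your symbol formulas and the dimension count $1-8+7=0$ (which reduces surjectivity to middle exactness) are correct, and the middle-exactness computation you flag does go through cleanly: writing $\xi=c\eta+\xi_h$, $\alpha=a\eta+\alpha_h$ with $\xi_h,\alpha_h\in H^*$, for $c\ne 0$ one eliminates $\beta$ and uses the four-dimensional identity $\imath_{\xi_h^\sharp}(\xi_h\wedge\alpha_h)^+ = \tfrac12\bigl(|\xi_h|^2\alpha_h - \langle\xi_h,\alpha_h\rangle\xi_h\bigr)$ to conclude $\alpha\parallel\xi$ and $\beta=0$; for $c=0$ one uses that $\imath_{\xi_h^\sharp}\beta\perp\xi_h$ always (forcing $a=0$), that $\imath_{\xi_h^\sharp}\colon\Lambda^2_+H^*\to H^*$ is injective for $\xi_h\ne 0$ (forcing $\beta=0$), and that $(\xi_h\wedge\alpha_h)^+=0$ forces $\xi_h\wedge\alpha_h=0$ since a simple 2-form has equal self-dual and anti-self-dual norms. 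One small imprecision in your parenthetical remark: for fixed $\xi_h\ne 0$ the map $\beta\mapsto Cl'(\xi_h\otimes\beta)=-\imath_{\xi_h^\sharp}\beta$ sends $\Lambda^2_+H^*$ injectively onto the three-dimensional subspace $\xi_h^\perp\subset H^*$, so it is neither surjective onto $H^*$ nor has a one-dimensional kernel; the injectivity together with the orthogonality $\imath_{\xi_h^\sharp}\beta\perp\xi_h$ is what the argument actually uses.
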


The statement of this Lemma follows immediately from Remark~\ref{Rem_5dEqnsAsSpin7Instantons}. Alternatively, one can consider equations~(\ref{Eq_Main}) on $\mathbb R^5$ and show that the symbol of  $K^*+\delta\Phi$ is modelled on the octonionic multiplication. We omit the details.


\section{Dimensional reductions}\label{Sect_DimReductions}

Before turning our attention to the dimensional reductions of equations~(\ref{Eq_Main})  a little digression is in place. Suppose a Lie group $\mathcal G$ acts \emph{freely} and isometrically on a Riemannian manifold $M$. Identify a $\mathcal G$-invariant function $ f\colon M\rightarrow\mathbb R$ with a function $\hat f\colon M/\mathcal G\rightarrow\mathbb R$. Then critical points of $\hat f$ correspond to orbits of solutions of the equation $\grad f= K_\xi$, where $\xi\in Lie(\mathcal G)$ and $K_\xi$ is the Killing vector field corresponding to $\xi$. But the invariance of $f$ implies %
$\langle \grad f, K_{\xi'}  \rangle=0$ for any $\xi'\in Lie(\mathcal G)$ so that we necessarily have $\grad f=0$ for any point on $M$ projecting to a critical point of $\hat f$. 

Similarly, a curve $m\colon\mathbb R\rightarrow M$ projects to an antigradient flow of $\hat f$ if and only if there exists $\xi\colon\mathbb R\rightarrow Lie(\mathcal G)$ such that%
\begin{equation}\label{Eq_EquivFlowEquation}
 \dot m= -\grad f+K_\xi.
\end{equation}
The Lie group $\{ g\colon\mathbb R\rightarrow \mathcal G \}$ acts on solutions of equation~(\ref{Eq_EquivFlowEquation}) and the orbits are in bijective correspondence with the antigradient flow lines of $\hat f$. Furthermore, we may consider only those solutions, which are horizontal with respect to the natural connection. This gives a bijection between ordinary flow lines of $f$ modulo $\mathcal G$ and flow lines of $\hat f$. 

The upshot is that $\mathcal G$-invariance of $f$ implies that  equation~(\ref{Eq_EquivFlowEquation}) is equivalent to the ordinary antigradient flow equation of $f$.  It will be important to switch freely between these two approaches in an infinite-dimensional setup. The reasons will be clear below.

\subsection{Dimension four}

Let $X$ be a closed oriented Riemannian four-manifold. Below we consider equations~(\ref{Eq_Main})  on $(W,v)=(X\times\mathbb R_t, \frac \partial{\partial t})$ endowed with the product metric.

Denote  by $pr\colon X\times\mathbb R\rightarrow X$ the canonical projection and set $P=pr^*P_X$, where $P_X\rightarrow X$ is a principal $G$-bundle. Think of $B\in\Om^2_+(X\times\mathbb R; pr^*ad\, P_X)$ as a map %
$b\colon\mathbb R\rightarrow \Om^2_+(X; ad\, P_X)$. Similarly $A\in\mathcal A(pr^*P_X)$ can be seen as a map %
$(a, c)\colon\mathbb R\rightarrow\mathcal A(P_X)\times\Om^0(ad\, P_{X})$, where $c$ is the Higgs field. Then equations~(\ref{Eq_Main}) are easily seen to become%
\begin{equation}\label{Eq_GradFlowOnXR}
  \begin{aligned}
   &\dot a= \delta_a^+  b + d_a c, \\
   &\dot b=F_a^+-\sigma(b, b) - [c,b], 
 \end{aligned}
\end{equation}
where $\delta_a^+=(d_a^+)^*$.  These equations turn out to be the antigradient flow equations of some function. Indeed, consider the function%
\begin{equation*}
 h\colon \Lambda^2_+ H^*\otimes\mathfrak g\rightarrow\mathbb R,\qquad h(\mathrm w)=\frac 13\langle \mathrm w, \sigma(\mathrm w)\rangle.
\end{equation*}
Choose an isomorphism $\Lambda^2_+H^*\cong\mathbb R^3$ and write $\mathrm w=\sum_{i=1}^3 e_i\otimes\xi_i$. Then we have %
$h(\mathrm w)=\langle\xi_1, [\xi_2,\xi_3]\rangle$ and therefore %
$\mathrm{grad}\, h (\mathrm w)=\sigma (\mathrm w) $. Since $h$ is equivariant with respect to both $SO(3)$ and $G$, we obtain a well-defined map  $\Om^2_+(ad\, P_X)\rightarrow C^\infty(X)$ denoted by the same letter.

Denote  $\mathcal B=\mathcal A(P)\times \Om^2_+(ad\, P)/\mathcal G(P)$. As usual, $\mathcal B^*\subset \mathcal B$  denotes the quotient space of irreducible points. The negative $L^2$--gradient of the function %
\begin{equation*}
 U\colon\mathcal B\rightarrow\mathbb R,\qquad%
    U(a,b)=-\langle F_a^+, b\rangle_{L^2}+\int_X h(b)\, vol_X
\end{equation*}
is $(\delta_a^+ b,\, F_a^+-\sigma(b,b))$. Hence,  assuming there are no reducible solutions, equations~(\ref{Eq_GradFlowOnXR})  represent the antigradient flow equations of the function $\hat U\colon \mathcal B^*\rightarrow \mathbb R$.

We summarize our computations in the following proposition.

\begin{prop}
 If there are no reducible solutions, equations~(\ref{Eq_GradFlowOnXR})  represent antigradient flow equations of the function $\hat U\colon\mathcal B^*\rightarrow \mathbb R$.\qed
\end{prop}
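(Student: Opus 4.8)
The plan is to identify system~\eqref{Eq_GradFlowOnXR} with the equivariant antigradient flow equation~\eqref{Eq_EquivFlowEquation} for the function $U$, and then invoke the general principle on equivariant gradient flows recorded in the digression at the beginning of this section. The one nontrivial input is the computation of the $L^2$-gradient of $U$ on the \emph{unquotiented} configuration space, which is a routine variational calculation, together with the $\mathcal G(P_X)$-invariance of $U$.

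First I would work on $\mathcal C=\mathcal A(P_X)\times\Om^2_+(ad\,P_X)$ with its $L^2$-metric. Since $\tfrac d{ds}\big|_{0}F^+_{a+s\alpha}=d_a^+\alpha$ and $h(b)$ does not involve $a$, varying the connection gives $\tfrac d{ds}\big|_{0}U(a+s\alpha,b)=-\langle d_a^+\alpha,b\rangle_{L^2}=-\langle\alpha,\delta_a^+b\rangle_{L^2}$ because $\delta_a^+=(d_a^+)^*$. Varying $b$ and using the pointwise identity $\grad h=\sigma$ recorded above gives $\tfrac d{ds}\big|_{0}U(a,b+s\beta)=\langle -F_a^++\sigma(b,b),\,\beta\rangle_{L^2}$. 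Hence
\[
-\grad U(a,b)=\bigl(\delta_a^+b,\ F_a^+-\sigma(b,b)\bigr),
\]
so the negative gradient flow of $U$ on $\mathcal C$ is exactly~\eqref{Eq_GradFlowOnXR} with $c\equiv0$. Next I would record that $U$ is $\mathcal G(P_X)$-invariant: the curvature and $B$ both transform by $\mathrm{Ad}$, the fibre metric on $ad\,P_X$ is $\mathrm{Ad}$-invariant, and $h$ is equivariant with respect to $G$ (and $SO(3)$); therefore $U$ descends to $\hat U$ on $\mathcal B=\mathcal C/\mathcal G(P_X)$, and to a smooth function on the (Banach) manifold $\mathcal B^*$ of irreducible configurations, where the $\mathcal G(P_X)$-action is free and the natural connection coming from the $L^2$-orthogonal splitting $T\mathcal C=\ker K^*\oplus \mathrm{im}\,K$ is available.

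Finally I would apply the digression with $(M,\mathcal G,f)=(\mathcal C,\mathcal G(P_X),U)$: a curve $(a(t),b(t))$ projects to an antigradient flow line of $\hat U$ on $\mathcal B^*$ if and only if there exists $\xi\colon\R\to\Om^0(ad\,P_X)=\mathrm{Lie}\,\mathcal G(P_X)$ with $(\dot a,\dot b)=-\grad U(a,b)+K_\xi$, where $K_\xi=(d_a\xi,[b,\xi])$ is the infinitesimal gauge action of Section~\ref{Sect_5dGaugeTheory}. Substituting the gradient formula and identifying the Lie-algebra parameter $\xi$ with the Higgs field $c$ (and noting $[b,c]=-[c,b]$) turns this into precisely~\eqref{Eq_GradFlowOnXR}. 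The hypothesis that there are no reducible solutions is exactly what keeps these flow lines inside $\mathcal B^*$, where $\hat U$ and its gradient flow are defined; the only point demanding genuine care is justifying that the finite-dimensional digression transfers to the present infinite-dimensional gauge-theoretic setting (appropriate Sobolev completions, the slice theorem, and ellipticity of the deformation complex), but this is entirely standard.
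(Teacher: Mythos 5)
Your proof is correct and takes essentially the same route as the paper: the paper simply states that the negative $L^2$-gradient of $U$ on the unquotiented configuration space is $(\delta_a^+b,\,F_a^+-\sigma(b,b))$ and then appeals to the general digression on equivariant gradient flows at the beginning of Section~\ref{Sect_DimReductions}, treating that as a proof. You have merely filled in the routine variational computation (using $\delta_a^+=(d_a^+)^*$ and $\grad h(\mathrm w)=\sigma(\mathrm w,\mathrm w)$) and made explicit the identification of the Higgs field $c$ with the Lie-algebra parameter $\xi$ in~\eqref{Eq_EquivFlowEquation}, which is exactly what the paper intends.
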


The critical points of the function $U$ are solutions of the Vafa--Witten equations~\cite{VafaWitten:94_StrongCouplingTest}:%
\begin{align}
   & \delta_a^+ b+d_a c=0, \notag\\
   & F_a^+-\sigma(b, b) +[b,c]=0.\notag
 \end{align} 
These equations are elliptic and the expected dimension of the moduli space is zero.

As we have seen, the $\mathcal G(P)$-invariance of $U$ implies that for each irreducible solution of the Vafa--Witten equations we have $(d_ac, [b,c])=0$, i.e. in the absence of reducible solutions the above equations are equivalent to%
\begin{equation}\label{Eq_RedVW}
 \begin{aligned}
   & \delta_a^+b =0, \\
   & F_a^+-\sigma(b, b) =0.
  \end{aligned}
\end{equation}
Notice that the Weitzenb\"ock formula %
\begin{equation*}
2d_a^+ \delta_a^+=(\nabla^a)^*\nabla^a -2W^+\, +\frac s3\,  + \sigma(F_a^+,\cdot ),
\end{equation*}
yields
\begin{equation*}
 \begin{aligned}
   4\| \delta_a^+b\|^2+ \| F_a^+-\sigma(b,b)\|^2 &= 2\| \nabla^a b \|^2- 4\langle W^+(b), b\rangle +\frac 23\langle s b, b\rangle +2\langle F_a^+, \sigma (b,b)\rangle\\
				&\phantom{=} +\| F_a^+\|^2 + \| \sigma(b,b)\|^2-2\langle F_a^+, \sigma (b,b)\rangle\\
                &=2\| \nabla^a b \|^2- 4\langle W^+(b), b\rangle +\frac 23\langle s b, b\rangle +\| F_a^+\|^2 + \| \sigma(b,b)\|^2.
 \end{aligned}
\end{equation*}

\begin{prop}[\cite{VafaWitten:94_StrongCouplingTest}]\label{Prop_VWVanishingThm}
 If the operator $-W^++\frac 16 s$ is pointwise non-negative definite on $\Lambda^2_+T^*X$, then for any irreducible solution $(a,b)$ of the Vafa--Witten equations the following holds: $F_a^+=0,\ \nabla^a b=0$.\qed
\end{prop}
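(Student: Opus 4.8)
The plan is to feed the reduced Vafa--Witten equations directly into the Weitzenb\"ock identity recorded just above the statement. First I would invoke the observation made earlier in this subsection: because $U$ is $\mathcal G(P)$-invariant, any \emph{irreducible} solution of the Vafa--Witten equations automatically satisfies $d_ac=0$ and $[b,c]=0$, hence solves the reduced system~\eqref{Eq_RedVW}, i.e.\ $\delta_a^+b=0$ and $F_a^+=\sigma(b,b)$. In particular the left-hand side of the displayed Weitzenb\"ock identity,
\[
4\|\delta_a^+b\|^2+\|F_a^+-\sigma(b,b)\|^2,
\]
vanishes identically for such a solution.

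Next I would rewrite the right-hand side so that the hypothesis on $-W^++\tfrac16 s$ becomes visible. Grouping the two curvature-of-$X$ terms gives
\[
-4\langle W^+(b),b\rangle+\tfrac23\langle sb,b\rangle=4\bigl\langle(-W^++\tfrac16 s)(b),\,b\bigr\rangle,
\]
and integrating the pointwise non-negativity assumption over $X$ shows this quantity is $\ge 0$. The remaining three terms $2\|\nabla^ab\|^2$, $\|F_a^+\|^2$ and $\|\sigma(b,b)\|^2$ are manifestly non-negative. Since the identity now reads ``$0=$ sum of non-negative terms'', every summand must vanish; in particular $\nabla^ab=0$ and $F_a^+=0$ (using $\|F_a^+\|^2=0$ directly, or equivalently $\|\sigma(b,b)\|^2=0$ together with $F_a^+=\sigma(b,b)$).

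I do not expect a genuine obstacle here: the argument is the elementary ``sum of non-negative quantities equal to zero'' principle once the reduced equations are in hand. The only points requiring care are bookkeeping ones --- checking that the Weitzenb\"ock formula $2d_a^+\delta_a^+=(\nabla^a)^*\nabla^a-2W^++\tfrac s3+\sigma(F_a^+,\cdot)$ is applied with the sign conventions under which the stated identity holds after substituting $F_a^+=\sigma(b,b)$, and that ``pointwise non-negative definite'' is used in its integrated form $\int_X\langle(-W^++\tfrac16 s)(b),b\rangle\,vol_X\ge 0$. Both are routine.
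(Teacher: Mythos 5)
Your argument is exactly the one the paper intends: the Weitzenb\"ock computation displayed immediately before the proposition is precisely the ``sum of non-negative terms equals zero'' identity, and the \(\qed\) placed in the statement signals that this computation \emph{is} the proof. Your rearrangement $-4\langle W^+(b),b\rangle+\tfrac23\langle sb,b\rangle=4\langle(-W^++\tfrac16 s)(b),b\rangle$ and the reduction of an irreducible Vafa--Witten solution to~\eqref{Eq_RedVW} are the two bookkeeping steps the paper leaves implicit, and you carry them out correctly.
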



\subsection{Dimension three}\label{Subsec_Dim3}

In this section various forms of equations~(\ref{Eq_Main}) are studied on $Y^3\times\mathbb R^2$, where $Y$ is a closed oriented Riemannian three-manifold. 

Just like in the instanton Floer theory, consider solutions of~(\ref{Eq_RedVW}) on $X=Y\times\mathbb R$. Assuming  $a$ is in a temporal gauge, we obtain the following system of equations%
\begin{equation}\label{Eq_GradFlowReCS}
\begin{aligned}
   &\dot a=-\! \ast\!(F_a-  \tfrac 12[b\wedge b]),\\
   &\dot b=\ast d_a b,\\
    &0= \delta_{a}  b,
 \end{aligned}
\end{equation}
where $(a,b)$ is interpreted as a curve in $\mathcal A(P)\times\Om^1(ad\, P)\cong T^*\mathcal A(P)$. Here we have also used the isomorphism $\Gamma(\pi^*T^*Y)\cong\Om^2_+  (Y\times\mathbb R), \ \om\mapsto\tfrac 12 (\ast_{\ssst 3}\om +ds\wedge\om)$, where $\pi\colon Y\times\mathbb R\rightarrow Y$ is the projection.

Observe that $T^*\mathcal A(P)$ is a (flat) K\"ahler manifold and the action of the gauge group is Hamiltonian.  The momentum map is given by%
\begin{equation}\label{Eq_MomentMapOfGaugeGp}
 \mu\colon T^*\mathcal A(P)\rightarrow \Om^0(ad\, P),\qquad \mu (a,b)=\delta_ab.
\end{equation}
Denote $N=\mu^{-1}(0)=\{(a,b)\; |\ \delta_a b=0\}\subset T^*\mathcal A(P)$. It follows from the very definition of the momentum map that $d\mu$ is surjective  at $(a,b)$ if and only if the gauge group acts locally freely at $(a,b)$. Therefore, the subset $N^*$ consisting of all irreducible points of $N$ is a submanifold. Hence, $N^*/\mathcal G(P)$ is a K\"ahler manifold.

Consider the map%
\begin{equation*}
 f_0\colon\mathcal A(P)\times\Om^1(ad\, P)\rightarrow \mathbb R/\mathbb Z,\qquad %
 f_0(a,b)= 8\pi^2\vartheta (a) - \tfrac 12 \langle b,\, *d_a b \rangle_{L^2},
\end{equation*}
where $\vartheta$ is the Chern--Simons function. It is easy to check that the vector field %
$\grad f_0=\bigl (*(F_a-  \frac 12[b\wedge b]\bigr ),\; -*d_a b)$ is tangent to $N^*$ at each point. Therefore critical points of the restriction of $f_0$ to $N^*$ are solutions of Hitchin's equations\footnote{Hitchin studied these equations in the case of two-dimensional base manifolds.}~\cite{Hitchin:87}:%
\begin{equation}\label{Eq_Hitchins}
 \begin{aligned}
& F_a- \tfrac 12[b\wedge b]=0,\\
& d_a b=0,\\
& \delta_{a}  b =0.
 \end{aligned}
\end{equation}
More accurately, in the same manner as described at the beginning of this section, orbits of irreducible solutions to~(\ref{Eq_Hitchins}) correspond to critical points of $\hat f_0\colon N^*/\mathcal G(P)\rightarrow \mathbb R$. Similarly, orbits of~\eqref{Eq_GradFlowReCS} correspond to the flow lines of $\hat f_0$.

\begin{rem}\label{Rem_GandGcConnections}
Denote by $G^c$ the complexified Lie group and by $\mathcal P=P\times_GG^c$ the principal $G^c$-bundle associated with $P$. Any connection on $\mathcal P$ can be written in the form $\mathscr A=a+ib$, where $(a,b)\in\mathcal A(P)\times \Om^1(ad\, P)$. Conversely, any pair $(a,b)$ combines to a $G^c$ connection $\mathscr A$. Then $\mathscr A$ is flat if and only if the first two equations of~\eqref{Eq_Hitchins} are satisfied. The last equation, i.e. the vanishing of the moment map, has been analyzed in~\cite{Donaldson:87_TwistedHarmonicMaps, Corlette:88_FlatGbundles}. 
\end{rem}

\begin{rem}
 Hitchin's equations can be obtained from $SU(3)$ anti-self-duality equations along similar lines to those outlined in Remark~\ref{Rem_5dEqnsAsSpin7Instantons}. Namely, the total space of $T^*Y$ is equipped with an $SU(3)$-structure. Then $SU(3)$-instantons invariant along each fiber are solutions of Hitchin's equations.
\end{rem}

We can also consider equations~\eqref{Eq_Main} on $W=\mathbb R_t\times Y\times\mathbb R_s$ with $v=-\tfrac\partial{\partial s}$. Write $A=a+e\,ds+ c\,dt$, where $a$ is a family of connections on $P\rightarrow Y$. Consider first only $t$-invariant solutions with $c=e=0$. A computation yields the following system:%
\begin{equation}\label{Eq_GradFlowImCS}
 \begin{aligned}
&\dot a=-\ast\! d_a b,\\
&\dot b=-\ast\! (F_a - \tfrac 12[b\wedge b]),\\
 &0 =\delta_{a}  b,
 \end{aligned}
\end{equation}
where the dots denote the derivative with respect to the variable $s$. Equations~\eqref{Eq_GradFlowImCS} and~\eqref{Eq_GradFlowReCS}  appeared in~\cite{KapustinWitten:07_ElectrMagnDuality} for the first time  and were further studied in~\cite{Witten:10_AnalyticContinuation,Witten:11_NewLookAtPathIntegral}.

Consider the function
\begin{equation*}
 f_1\colon T^*\mathcal A(P)\rightarrow\mathbb R,\qquad%
 f_1(a,b)=\langle F_a,\, *b\rangle_{L^2} - \int_Y h(b)\, vol_Y  .
\end{equation*}
Since  $\grad f_1=(* d_a b,\, *(F_a-\frac 12[b\wedge b]))$ is tangent to $N^*$ at each point we conclude that the moduli of solutions to equations~\eqref{Eq_GradFlowImCS} correspond to antigradient flow lines of $\hat f_1\colon N^*/\mathcal G(P)\rightarrow \mathbb R$.

Let us examine the functions $f_0$ and $f_1$ more closely. Since $\grad f_1=J\grad f_0$, where $J$ is the constant complex structure on $T^*\mathcal A(P)\cong \Om^1(ad\, P)\otimes\mathbb C$, we obtain that the function $f=f_0+if_1$ is $J$-holomorphic. Writing  $(a,b)$ as a $G^c$-connection $\mathscr A$ as in Remark~\ref{Rem_GandGcConnections} it is easy to check that $f$ is the complex Chern--Simons functional
\begin{equation*}
\CS(\mathscr A)=%
\frac 12\int_Y\left ( \langle \mathscr A\wedge d\mathscr A \rangle +  
    \tfrac 13\langle \mathscr A\wedge [\mathscr A\wedge \mathscr A] \rangle\right ).
\end{equation*}
Here we interpret $\mathscr A$ as a $\mathfrak{g}_{\mathbb C}$-valued 1-form on $Y$, and $\langle\cdot,\cdot\rangle\colon \mathfrak{g}_{\mathbb C}\otimes\mathfrak{g}_{\mathbb C}\rightarrow\mathbb C$ denotes the $\mathbb C$-linear extension of the scalar product on $\mathfrak g$.

\medskip

Further, let us consider equations~(\ref{Eq_Main}) on $(W, v)=(Y\times \mathbb R^2_{s,t}\, , \tfrac\partial{\partial t}\,)$. The standard reduction procedure yields the following system%
\begin{equation}\label{Eq_5dEquationsAsSVE}
 \begin{aligned}
     &\partial_s a-\partial_tb +[b,c] -d_a e +\ast\duzhky{F_a-\tfrac 12[b\wedge b]}=0,\\
    &\partial_t a+\partial_s b-[b,e]-d_a c- *d_a b=0,\\
    &\partial_t e-\partial_s c+[c, e]+\delta_a b=0.
 \end{aligned}
\end{equation}
Here $a$ is a connection on the pull-back of $P=P_Y$ to $Y\times\mathbb R^2$, $b$ is a 1-form and $c,\ e$ are 0-forms with values in the adjoint bundle of Lie algebras. It is easy to check that these equations are symplectic vortex equations~\cite{SalamonEtAl:00} with a Hamiltonian perturbation for the following data: The target space is $T^*\mathcal A(P)$ equipped with the Hamiltonian action of the gauge group $\mathcal G(P)$, $u=(a,\, b)\colon\mathbb R^2\rightarrow T^*\mathcal A(P),\ A=e\,ds+ c\, dt$, and the perturbation is %
$\sigma=\tfrac 12\im (f dz)=\tfrac 12 (f_0\, dt +f_1\, ds)$.

Notice also that we are free to rotate the coordinates $s$ and $t$ or, equivalently, to rotate the initial vector field $v={\partial_t}$. This is in turn equivalent to the choice of the Hamiltonian perturbation $\sigma=\tfrac 12\im (e^{i\theta}f dz)$ and the resulting equations are  
\begin{equation}\label{Eq_5dEquationsAsSVEWithParameter}
 \begin{aligned}
    &\partial_s a-\partial_tb +[b,c] -d_a e -\sin\theta\, *\! d_ab +\cos\theta\,  \ast\!\duzhky{F_a-\tfrac 12[b\wedge b]} =0,\\
    &\partial_t a+\partial_s b-[b,e]-d_a c-\cos\theta\, *\!  d_a b - \sin\theta\, \ast\!\duzhky{F_a-\tfrac 12[b\wedge b]} =0,\\
    &\partial_t e -\partial_s c+[c, e]+\delta_a b=0.
 \end{aligned}
\end{equation}

\begin{rem}\label{Rem_AdiabaticLimitFor5dEqns}
The above description of equations~\eqref{Eq_5dEquationsAsSVEWithParameter} is analogous to the interpretation of the anti-self-duality equations on $\mathbb C\times\Sigma$ as symplectic vortex equations~\cite{SalamonEtAl:00}. A new phenomenon here is the appearance of the Hamiltonian perturbation. Notice also that the adiabatic limit procedure as in~\cite{SalamonEtAl:00, GaioSalamon:05} for equations~(\ref{Eq_5dEquationsAsSVEWithParameter}) yields (at least formally) holomorphic planes to $T^*\mathcal A(P)\sympred\mathcal G(P)$ with a Hamiltonian perturbation. 
\end{rem}

For solutions of equations~\eqref{Eq_5dEquationsAsSVEWithParameter} invariant with respect to $s$ we obtain the following system
\begin{equation}\label{Eq_GradFlowCSwithTheta}
 \begin{aligned}
    \dot a &=\phantom{-}\cos\theta\, *\!  d_a b +\sin\theta\, \ast\!\duzhky{F_a-\tfrac 12[b\wedge b]} + d_a c +[b,e],\\
    \dot b &=-\sin\theta\, *\! d_ab +\cos\theta\,  \ast\!\duzhky{F_a-\tfrac 12[b\wedge b]} -d_a e+[b,c], \\
    \dot e &=-\delta_a b +  [e, c].
 \end{aligned}
\end{equation}
Notice that if $c=e=0$ we obtain equations~\eqref{Eq_GradFlowReCS} and~\eqref{Eq_GradFlowImCS} for $\theta=-\pi/2$ and $\theta=\pi$, respectively.

It will be helpful in the sequel to consider equations~\eqref{Eq_GradFlowCSwithTheta} from a more abstract point of view. Namely, let $(M, \om)$ be a symplectic manifold. Assume a Lie group $\mathcal G$ acts on $M$ in a Hamiltonian manner. Denote by  $\mu\colon M\rightarrow \mathfrak G=Lie(\mathcal G)$ the  corresponding moment map. Let $f\colon M\to\C$ be a $\mathcal G$-invariant $J$-holomorphic function, where $J$ is a $\mathcal G$-invariant almost complex structure on $M$.  Consider the following equations for a curve $(\gamma,\xi, \eta)$ in $M\times\mathfrak G\times\mathfrak G$:
\begin{equation}\label{Eq_AbstrGradFlowSymplRed}
 \begin{aligned}
  \dot\gamma &=\sin\theta\,\grad f_0 (\gamma) +\cos\theta\,\grad f_1(\gamma) + K_\xi(\gamma) -JK_\eta (\gamma),\\
   \dot\eta &= -\mu(\gamma) - [\xi,\eta],
 \end{aligned}
\end{equation}
where $K$ is the Killing vector field. Clearly, we obtain equations~\eqref{Eq_GradFlowCSwithTheta} from~\eqref{Eq_AbstrGradFlowSymplRed} putting  $M=T^*\mathcal A(P)$. 

Further, observe that for any  $\zeta,\rho\in\mathfrak G$ the following equalities hold: %
\begin{equation*}
 d \bigl (\om(K_\zeta, K_\rho)\bigr )= d\,\imath_{K_\rho}(\imath_{K_\zeta}\om)=%
    \mathcal L_{K_\rho}(\imath_{K_\zeta}\om)- \imath_{K_\rho} d(\imath_{K_\zeta}\om)=%
    \imath_{K_{[\rho,\zeta]}}\om = - d\,\langle \mu, [\zeta,\rho]\rangle.
\end{equation*}
Here the second equality follows from Cartan's equation. Hence, $\langle \mu, [\zeta,\rho]\rangle=-\om(K_\zeta, K_\rho)=g(K_\zeta, JK_\rho)$.
Therefore for any solution of~\eqref{Eq_AbstrGradFlowSymplRed} we have%
\begin{equation}\label{Eq_AuxNegativeDerivative}
 \begin{aligned}
   \frac d{dt}\langle \mu(\gamma),\eta\rangle &=\om(K_\eta,\dot\gamma) +\langle \mu,\dot\eta\rangle=%
    g(JK_\eta, K_\xi)-g(JK_\eta, JK_\eta)- \langle \mu,\mu\rangle - \langle \mu, [\xi, \eta]\rangle\\%
   &= -g(K_\eta, K_\eta)- \langle \mu,\mu\rangle\le 0.
 \end{aligned}
\end{equation}
Here the first equality follows from the definition of the momentum map, the second one from equations~\eqref{Eq_AbstrGradFlowSymplRed} and the $\mathcal G$-invariance of $f$, and the last one from the equation $\langle \mu, [\xi,\eta]\rangle=g(K_\xi, JK_\eta)$. Hence, for any solution of equations~\eqref{Eq_AbstrGradFlowSymplRed} the function $\langle \mu(\gamma),\eta\rangle$ is non-increasing. 

We will be interested below in solutions $(\gamma, \xi,\eta)$ of~\eqref{Eq_AbstrGradFlowSymplRed} satisfying the condition%
\begin{equation}\label{Eq_CondAtInftyForAbstrFlow}
 (\gamma, \xi,\eta)\longrightarrow (m_\pm, 0, 0)\qquad\text{as}\quad t\to \mp\infty,
\end{equation}
where $m_\pm$ are critical points of $f$. For any such solution $\langle \mu(\gamma),\eta\rangle$ vanishes at $\pm\infty$ and hence vanishes everywhere. Then from~\eqref{Eq_AuxNegativeDerivative} we conclude that $\eta$ and $\mu\comp\gamma$ vanish everywhere, i.e. under condition~\eqref{Eq_CondAtInftyForAbstrFlow} equations~\eqref{Eq_AbstrGradFlowSymplRed} reduce to
\begin{equation*}
  \dot\gamma =\sin\theta\,\grad f_0 +\cos\theta\,\grad f_1 + K_\xi,\qquad \mu(\gamma)=0.
\end{equation*}
 
From the discussion at the beginning of Section~\ref{Sect_DimReductions} we obtain that these equations are equivalent to
\begin{equation}\label{Eq_ReducedAbstrGradFlowSymplRed}
  \dot\gamma =\sin\theta\,\grad f_0 +\cos\theta\,\grad f_1,\qquad \mu(\gamma)=0.
\end{equation}
Summing up, we have that under condition~\eqref{Eq_CondAtInftyForAbstrFlow} systems~\eqref{Eq_AbstrGradFlowSymplRed} and~\eqref{Eq_ReducedAbstrGradFlowSymplRed} are equivalent. Applying this conclusion in the case $M=T^*\mathcal A(P)$ we obtain that for $\theta=-\pi/2$ equations~\eqref{Eq_GradFlowCSwithTheta} together with the condition %
\begin{equation*}
 (a,b, c, e)\longrightarrow (a_\pm, b_\pm,0,0)\qquad\text{as}\quad t\to\pm\infty,
\end{equation*}
where $(a_\pm, b_\pm)$ are solutions of Hitchin's equations, are equivalent to equations~\eqref{Eq_GradFlowReCS} together with $(a,b)\to (a_\pm, b_\pm)$ as $t\to\mp\infty$. The upshot is that while equations~\eqref{Eq_GradFlowReCS}  and~\eqref{Eq_GradFlowCSwithTheta} with $\theta=-\pi/2$ are essentially equivalent, only the latter are elliptic. 

\begin{rem}
 One obtains an elliptic form of Hitchin's equations on a three manifold by considering solutions of equations~\eqref{Eq_Main} on $(Y\times\mathbb R^2, \partial_t)$ invariant along $\mathbb R^2$. The corresponding equations are easily obtained from~\eqref{Eq_5dEquationsAsSVE}. 
\end{rem}



\section{Invariants}\label{Sect_Invariants}

In this section we outline constructions of invariants assigned to five-, four-, and three- manifolds arising from gauge theories described in the preceding sections.  It is clear that the constructions described below need an appropriate analytic justification. We postpone this to subsequent papers and restrict ourselves to some examples.  Throughout this section  the coefficient ring is $\mathbb Z/2\mathbb Z$ in all constructions for the sake of simplicity. 

The expected dimension of the moduli space of solutions of equations~(\ref{Eq_Main}) for closed five-manifolds is zero. Therefore, assuming compactness and transversality, an algebraic count associates a number to closed five-manifolds. More accurately,  this number depends on the isomorphism class of $P$ and on the class of the vector field $v$ in $\pi_0(\mathfrak X_0(W))$, where $\mathfrak X_0(W)$ denotes the space of all vector fields on $W$ without zeros. 

\medskip

Let us now consider the dimension four. The corresponding construction is very similar to the instanton Floer theory, so we are very brief here. 
Assume the moduli space of solutions to the Vafa--Witten equations $\mathcal M_{VW}$ is compact and zero-dimensional (for the case $\dim\mathcal M_{VW}>0$ see example below). The index of the Hessian on $X^4\times S^1$ vanishes and therefore the relative Morse index of a pair of critical points is an integer.\footnote{In general, there is no a distinguished critical point as in the $SU(2)$-instanton Floer theory, so that we are left with the relative grading only.} The Floer differential counts the moduli space of finite-energy solutions of equations~(\ref{Eq_Main}) on $X\times\mathbb R$ converging to solutions of the Vafa--Witten equations at $\pm\infty$. As a result, for a smooth four-manifold equipped with a principal $G$-bundle Floer-type homology groups can  conjecturally be constructed.

\begin{ex}
 Let $X$ be a K\"ahler surface with a non-negative scalar curvature. Then Proposition~\ref{Prop_VWVanishingThm} applies and, therefore, $\mathcal M_{VW}=\mathcal M_{asd}$ assuming all asd connections are irreducible and non-degenerate. If $\dim\mathcal M_{asd}>0$ the function $U$ is not Morse but rather Morse--Bott. Then choosing a suitable perturbation, which is essentially a Morse function $h$ on $\mathcal M_{asd}$, one obtains the Morse--Witten complex of $h$. The details can be found for instance in~\cite{BanyagaHurtubise:09_MorseBott}. In other words, the corresponding Floer homology groups are homology groups of $\mathcal M_{asd}$. Notice that this agrees perfectly with the Vafa--Witten theory: The Vafa--Witten invariant, which counts solutions of the Vafa--Witten equations, is the Euler characteristic of $\mathcal M_{asd}$ provided the only solutions of the Vafa--Witten equations are anti-self-dual instantons.

It is worth pointing out that the above reasoning is valid if $\mathcal M_{asd}$ admits a compactification, which is a manifold. Notice that the Euler characteristic of $\mathcal M_{asd}$ in~\cite{VafaWitten:94_StrongCouplingTest} is taken as the Euler characteristic of the Gieseker compactification. 
\end{ex}

\medskip

Further, let us consider dimension three.
Let $ (Y, g) $ be a closed oriented Riemannian three-manifold.
Pick a nontrivial principal $G$-bundle $ P \rightarrow Y$ 
and assume that all solutions of Hitchin's equations are irreducible
(thus we exclude the case $ G = SU(2)$)
and the moduli space is finite, say 
$  \{ \mathscr A_1, \dots, \mathscr A_k \}$.
Recall that this is the critical set of the complex Chern--Simons functional and therefore we can conjecturally construct a corresponding collection of $k$ Fukaya--Seidel  $ A_\infty $-categories\footnote{$ \mathcal A_j(Y) $ will also depend on the metric as well as on the choice of $P$.} $\mathcal A_j(Y)$
as described in Section~\ref{Subsect_OutlineOfConstruction}.

Thus, the objects of $ \mathcal A_j(Y) $ are classes of
solutions   $\mathscr A_l$ of Hitchin's equations.  For ease of exposition we assume that $\re\CS (\mathscr A_1)<\dots <\re\CS (\mathscr A_k)$, where $\re\CS (\mathscr A_l) $ is understood to take values in $[0,1)$.   Recall that for any pair $\mathscr A_\pm \in \{ \mathscr A_1, \dots, \mathscr A_k \} $, $\mathscr A_- < \mathscr A_+ $,
the space $ hom (\mathscr A_-, \mathscr A_+) $
is generated by the broken flow lines of the complex Chern--Simons functional connecting $ \mathscr A_-$ with $\mathscr A_+$. More precisely, as described in Remark~\ref{Rem_FSCatForMultivaluedFns}, we consider only those broken flow lines $\gamma$ for which the image of $\CS\comp\gamma$ does not intersect the set %
$\bigl (\re\CS (\mathscr A_j), \re\CS (\mathscr A_{j+1})\bigr )\times\mathbb R$. Recall also that the flow lines of the complex Chern--Simons functional  can  conveniently be described as moduli of solutions of equations~\eqref{Eq_GradFlowCSwithTheta} satisfying the asymptotic conditions%
\begin{equation}\label{Eq_AsymptCondForCSFlow}
 (a,b,c,e)\longrightarrow (a_\pm^0, b_\pm^0, 0, 0)\qquad\text{as}\quad t\to\mp\infty,
\end{equation}
where $(a_\pm^0, b_\pm^0)$ are solutions of Hitchin's equations representing $\mathscr A_\pm$.

Further, the Floer differential 
$ \mu^1\colon hom (\mathscr A_-, \mathscr A_+)\rightarrow  hom (\mathscr A_-, \mathscr A_+)$ 
is obtained by counting moduli of finite-energy pseudoholomorphic planes with a Hamiltonian perturbation satisfying suitable conditions at infinity. 
In our case, by Remark~\ref{Rem_AdiabaticLimitFor5dEqns}
these pseudoholomorphic planes can (formally) be
interpreted as solutions of equations~\eqref{Eq_5dEquationsAsSVEWithParameter},
which are in turn interpreted as solutions of
equations~\eqref{Eq_Main}
on $ W = Y \times \mathbb R^2 $.

Summing up, choose any admissible pair $\mathscr B_\pm $
of gauge equivalence classes of finite--energy solutions of equations~\eqref{Eq_GradFlowCSwithTheta} and ~\eqref{Eq_AsymptCondForCSFlow}. Then  \emph{define} the map $ \mu^1 $
by counting moduli of  solutions to equations~\eqref{Eq_Main} on 
$
(W, v) = (Y \times \mathbb R^2, \cos \theta\, \partial_t + \sin \theta\, \partial_s)
$
with the following boundary conditions
\begin{align}
 ( a, b, c, e) & \rightarrow ( a_\pm(t), b_\pm(t), 0, 0 )\phantom{ a_\pm^0, b_\pm^0}  \text{as} \quad s \to \mp\infty, \nonumber\\%
 ( a, b, c, e) & \rightarrow ( a_\pm^0, b_\pm^0, 0, 0 ) \phantom{a_\pm(t), b_\pm(t)} \text{as} \quad t \to \mp \infty, \nonumber
\end{align}
where $(a_\pm(t), b_\pm(t))$ represents the class $\mathscr B_\pm$. 

To define the map $\mu^2$,
one considers finite--energy solutions of equations~\eqref{Eq_Main} on $ W = Y \times \Omega $ satisfying appropriate boundary conditions,
where $ \Omega $ is as shown in Fig.\ref{Fig_2dDomain}. The maps $ \mu^d $ for $ d \ge 3 $
are defined similarly and conjecturally
the whole collection $\{ \mu^d \}$ combines to an 
$ A_\infty $-structure.

Notice that the change of orientation on $ Y $
is equivalent to  multiplication of $ f $
by $ -1 $ and hence does not affect $\mathcal A_j(Y) $.
On the other hand, $ \mathcal A_j(Y) $ depends on the Riemannian metric $ g $. 
However, as explained in~\cite{Seidel:01VanishingCyclesI} the derived category $ D^b (\mathcal A_j(Y))$ should be independent of $g$.


\appendix
\section{Pseudoholomorphic strips and pseudoholomorphic planes}\label{Apend_PseudoholomStripsAndPlanes}

In this appendix we  outline  (without proof) a connection between pseudoholomorphic planes with a Hamiltonian perturbation and pseudoholomorphic strips with Lagrangian boundary conditions. To do so, pick a pair $(m_-, m_+)$ of critical points of $f$ and assume that the interval $\overline{z_-z_+}$ does not contain any other critical point, where $z_\pm=f(m_\pm)$. It is convenient  to choose the midpoint of  $\overline{z_-z_+}$  as the basepoint. We deviate here from our convention on the choice of the basepoint for the convenience of exposition only, namely to avoid differential equations with non-smooth coefficients.

Replacing $f$ with $e^{-i\theta_\pm}(f-z_0)$ if necessary we may assume that $z_\pm=\pm T, T>0$ and hence $z_0=0, \theta_0(t)\equiv 0$. We establish a relation between solutions of the equations
\begin{equation}\label{Eq_PseudoholomPlaneThetaVanishes}
  \begin{aligned}
    &\ \;\partial_s u + J(\partial_t u+ v_0  )=0,\qquad u\colon \mathbb R^2_{s,t}\rightarrow M,\\
    &\lim\limits_{t\to\pm\infty} u(s,t)= m_\mp,\qquad \lim\limits_{s\to\pm\infty} u(s,t)=\gamma_\mp(t)
  \end{aligned}
\end{equation}
and pseudoholomorphic strips in two steps.  In the first step we relate solutions of equations~\eqref{Eq_PseudoholomPlaneThetaVanishes} to solutions of the problem%
\begin{equation}\label{Eq_PseudoholomPlaneWithNormedHamPert}
  \begin{aligned}
    &\partial_s u_0 + J\Bigl (\partial_\tau u_0+\frac 1{\|v_0\|^2}v_0\Bigr )=0,\qquad (s,\tau)\in\mathbb R\times (-T, T),\\
    & u_0(s,\pm T)= m_\mp,\qquad \lim\limits_{s\to\pm\infty} u_0(s,\tau)=\gamma_{0,\,\pm}(\tau),
  \end{aligned}
\end{equation}
where $\gamma_{0,\,\pm}$ satisfies the equations%
\begin{equation}\label{Eq_NormedFlowLines}
  \begin{aligned}
    &\frac d{d\tau}\gamma_0 +\frac 1{\|v_0\|^2} v_0=0,\qquad \tau\in (-T, T),\\
    &\gamma_{0}(\pm T)=m_\mp.
  \end{aligned}
\end{equation}
In the second step we show how to relate solutions of~(\ref{Eq_PseudoholomPlaneWithNormedHamPert}) to pseudoholomorphic strips.

\emph{Step 1.} It is an elementary fact that equations~(\ref{Eq_NormedFlowLines}) are equivalent to the antigradient flow equations for $f_0$. Nevertheless it is instructive to examine this equivalence more closely. Consider the family of equations%
\begin{equation}\label{Eq_FlowLinesFamily}
  \begin{aligned}
      &\ \,\dot\gamma_\lambda+ \frac 1{\lambda + (1-\lambda)\| v_0\|^2}\, v_0=0,\qquad \gamma_\lambda\colon\mathbb R\rightarrow M,\\
      &\lim\limits_{t\to\pm\infty} \gamma_\lambda (t)=m_\mp,
  \end{aligned}
\end{equation}
where $\lambda\in (0, 1]$, and fix a parametrization by the condition $ f_0\comp\gamma_\lambda(0)=0$. Pick any solution $\gamma_1$ of equations~(\ref{Eq_FlowLinesFamily}) for  $\lambda=1$, i.e. an antigradient flow line of $f_0$, and consider the following family of diffeomorphisms%
\begin{equation*}
 \tau_\l\colon\mathbb R\rightarrow\mathbb R, \quad \tau_\l(t)=\l t+ (1-\l) f_0\comp\gamma_1(t),\qquad \l\in (0,1].
\end{equation*}
It is straightforward to check that $\gamma_\lambda=\gamma_1\comp\tau_\l^{-1}$ is a solution of~(\ref{Eq_FlowLinesFamily}), and this establishes a bijection between antigradient flow lines of $f_0$ and solutions of~\eqref{Eq_FlowLinesFamily}. This correspondence is also valid for  $\l=0$, but in this case $\tau_0$ maps $\mathbb R$ bijectively onto the interval $(-T, T)$. If we extend $\gamma_0$ by the constant values outside $(-T, T)$, then $\gamma_\l$ converges to $\gamma_0$ in $C^0(\mathbb R; M)$ as $\l\to 0$ (in fact, in any reasonable topology).

With this understood, consider the family of equations
\begin{equation}\label{Eq_PseudoholomPlaneWithHamPertFamily}
  \begin{aligned}
    &\ \;\partial_s u_\l + J\Bigl (\partial_t u_\l+\frac 1{\lambda + (1-\lambda)\| v_0\|^2}\, v_0\Bigr )=0,\qquad (s,t)\in\mathbb R^2\\
    &\lim\limits_{t\to\pm\infty} u_\l(s,t)= m_\mp,\qquad \lim\limits_{s\to\pm\infty} u_\l(s,t)=\gamma_{\l,\,\pm}(t).
  \end{aligned}
\end{equation}
For these equations explicit correspondence between solutions for different values of $\l$ is not available anymore, but it is reasonable to expect that $u_\l$ converges to a solution of~(\ref{Eq_PseudoholomPlaneWithNormedHamPert}) as $\l\to 0$.

\emph{Step 2.} Let $L_\pm(\tau)\subset  f^{-1}(\tau),\ \tau\in (-T,T),$ denote the vanishing cycle of $m_\pm$ associated with the segment $[\tau, \pm T]$. Consider the family of equations
\begin{equation*}
  \begin{aligned}
    &\partial_s u_\mu + J\Bigl (\partial_t u_\mu+\frac {1-\mu}{\| v_0\|^2}\, v_0\Bigr )=0,\qquad (s,\tau)\in\mathbb R\times (-T, T)\\
    & u_\mu (s,\pm T)\in L_\pm (\pm(1-\mu)T),   \qquad   \lim\limits_{s\to\pm\infty} u_\mu(s,\tau)=\gamma_{0,\,\pm}((1-\mu)\tau)
  \end{aligned}
\end{equation*}
with $\mu\in [0,1]$. Clearly, for $\mu=0$ we obtain  equations~(\ref{Eq_PseudoholomPlaneWithNormedHamPert}), whereas for $\mu=1$ we have holomorphic strips as in the classical definition of the Floer differential. Notice that the images of such holomorphic strips lie in the fiber of $f$. 

\begin{rem}
 Pick a solution $u_0$ of equations~(\ref{Eq_PseudoholomPlaneWithNormedHamPert}) and denote $ f\comp u_0=\varphi +i\psi$. It follows from the holomorphicity of $f$ that $\varphi$ and $\psi$ satisfy the inhomogeneous Cauchy-Riemann equations%
 \begin{equation*}
    \partial_s \varphi - \partial_\tau \psi=0,\quad \partial_s \psi + \partial_\tau \varphi +1=0
\end{equation*}
and therefore both functions are harmonic. Moreover, the holomorphicity of $f$ also implies that $f_1\comp\gamma_\pm (\tau)$ is constant it $\tau$ and therefore vanishes everywhere, since $f_1\comp\gamma_\pm (\pm T)= f_1(m_\pm)=0$. We conclude that $\psi$ vanishes as $\tau\to\pm T$ and as $s\to\pm\infty$ and thus vanishes everywhere. Therefore $\varphi(s,\tau)=-\tau$. We see that unlike pseudoholomorphic strips, images of solutions of~(\ref{Eq_PseudoholomPlaneWithNormedHamPert}) do not lie in a fixed fiber of $f$, but rather the fiber of $u_0(s,\tau)$ varies in a controlled manner for any $u_0$. 

Notice also that at the first glance equation~\eqref{Eq_PseudoholomPlaneWithNormedHamPert} has singularities. Namely, if a solution $u_0$ hits a critical point of $f$ at a single point $(s_0, \tau_0)$, then $\varphi$ and $\psi$ are harmonic in $\mathbb R\times (-T, T)\setminus \{ (s_0,\tau_0) \}$ and continuous at   $(s_0,\tau_0)$. Hence the singularity is removable and the above argument shows that the image of $ f\comp u_0$ is the segment $(-T, T)$. Since by assumption the segment $(-T, T)$ does not contain any critical values, we conclude that a priori a solution of~\eqref{Eq_PseudoholomPlaneWithNormedHamPert} cannot hit a critical point of $f$ in an interior point. 
\end{rem}



\section{On broken flow lines}\label{App_OnThePertAntigradFlowLines}

In this appendix missing details on broken flow lines are provided. We use notations introduced in Subsections~\ref{Subsect_OutlineOfConstruction} and~\ref{Subsect_AprioriC0Estim}.

\begin{lem}\label{Lem_DistToBrokenLine}
  Suppose the closed domain $G$ bounded by the triangle $z_-z_0z_+$   contains no critical values of $f$ other than $z_\pm$. Denote by $\ell$ the curve $\overline{z_-z_0}\cup\overline{z_0z_+}$.  Then for any $\e>0$ there exists $\nu_0>0$ such that for all broken flow lines $\gamma_\nu$ of $f$ connecting $m_-$ and $m_+$ and all $t\in\R$ we have 
\begin{equation} \label{Eq_DistanceToBrokenLine}
d\bigl (f\circ\gamma_\nu(t),\,\ell\bigr )<\e\qquad\text{provided}\quad \nu\le\nu_0.
\end{equation}
\end{lem}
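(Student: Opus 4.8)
The plan is to control the image curve $z_\nu(t) = f\circ\gamma_\nu(t)$ by deriving an ODE for it and exploiting the specific structure of $\theta_\nu$ on the three ``plateaus'' $|t|\ge\nu^{-1}+1$, $t\in[-\nu^{-1},-\nu]$, $t\in[\nu,\nu^{-1}]$, together with a compactness/contradiction argument to handle the short transition intervals.

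First I would write down the equation satisfied by $z_\nu = \varphi_\nu + i\psi_\nu$. Since $\gamma_\nu$ solves~\eqref{Eq_PertBrokenFlowLine}, i.e. $\dot\gamma_\nu + \cos\theta_\nu\,v_0 + \sin\theta_\nu\,v_1 = 0$, and $f$ is $J$-holomorphic with $Jv_0 = v_1$, one gets $\dot\varphi_\nu = -\cos\theta_\nu\,\rho\circ\gamma_\nu$, $\dot\psi_\nu = -\sin\theta_\nu\,\rho\circ\gamma_\nu$ (compare~\eqref{Eq_AuxPertCauchyRiemann} specialized to the one-variable case), where $\rho = |v_0|^2 \ge 0$. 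Hence $\dot z_\nu = -\rho\circ\gamma_\nu\,e^{i\theta_\nu(t)}$, so the velocity of the image curve always points in the direction $-e^{i\theta_\nu(t)}$, with non-negative speed. Thus $z_\nu$ is a reparametrization of a curve whose tangent direction is prescribed by $\theta_\nu$. On the plateaus where $\theta_\nu$ is constant, $z_\nu$ moves along straight lines: for $t\le -\nu^{-1}-1$ it moves in direction $-1$ (horizontally, toward $z_0 = 0$ from the right since $z_+$ has positive real part after the normalization $z_\pm$ with $\im z_\pm = \zeta$, $\theta_\pm\in(0,\pi)$); on $[-\nu^{-1},-\nu]$ in direction $-e^{i\theta_+}$ (parallel to $\overline{z_0z_+}$); on $[\nu,\nu^{-1}]$ in direction $-e^{i(\theta_- -\pi)} = e^{i\theta_-}$ (parallel to $\overline{z_0z_-}$, pointing from $z_0$ toward $z_-$). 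These are exactly the directions of the three segments making up the extended broken line $\overline{z_+z_0}\cup\overline{z_0z_-}$ near its endpoints; more precisely the terminal ray through $z_+$, the segment-direction of $\overline{z_0z_+}$, and the segment-direction of $\overline{z_0z_-}$.

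The key step is then to show the transition pieces contribute only a small, $\nu$-controlled deviation. On each transition interval ($(-\nu^{-1}-1,-\nu^{-1})$, $(-\nu,\nu)$, $(\nu^{-1},\nu^{-1}+1)$) the total change in $z_\nu$ is $\int |\dot z_\nu|\,dt = \int \rho\circ\gamma_\nu\,dt$, and since $\rho$ is bounded (by~\eqref{Eq_FuncRho} and the a priori bounds, or simply because $\rho$ is bounded on $M$) and the middle transition interval $(-\nu,\nu)$ has length $2\nu\to 0$, the displacement there is $O(\nu)$. For the two outer transition intervals, which have fixed length $1$ but where $\theta_\nu$ turns from $0$ to $\theta_+$ (resp. from $\theta_- -\pi$ to $0$), I would instead argue by contradiction using Proposition~\ref{Prop_ConvergentSubseqOfBrokenFlowLines}: if~\eqref{Eq_DistanceToBrokenLine} failed, there would be sequences $\nu_j\to 0$ and $\gamma_{\nu_j}$ with $\sup_t d(f\circ\gamma_{\nu_j}(t),\ell)\ge\e$; passing to the convergent subsequence furnished by that proposition, the limit $\gamma_0$ solves~\eqref{Eq_BrokenFlowLine}, whose image under $f$ lies exactly on $\ell = \overline{z_-z_0}\cup\overline{z_0z_+}$ (as noted right after~\eqref{Eq_BrokenFlowLine}, the image of $f\circ\gamma$ lies on the curve $\overline{z_-z_0z_+}$ and $f\circ\gamma(0)=z_0$). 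Then $\sup_t d(f\circ\gamma_0(t),\ell)=0$, contradicting the fact that $C^0$-convergence of $\gamma_{\nu_j}$ to $\gamma_0$ forces $\sup_t d(f\circ\gamma_{\nu_j}(t),\ell)\to 0$.

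I expect the main obstacle to be ensuring the contradiction argument is genuinely \emph{uniform}: Proposition~\ref{Prop_ConvergentSubseqOfBrokenFlowLines} gives a convergent subsequence for a fixed sequence, so one must be careful that the negation of~\eqref{Eq_DistanceToBrokenLine} is set up as ``for every $\nu_0$ there exist $\nu\le\nu_0$ and a broken flow line violating the bound,'' producing a bona fide sequence to which the proposition applies. A secondary point is that Proposition~\ref{Prop_ConvergentSubseqOfBrokenFlowLines} already presupposes the hypothesis on $G$, which is exactly the hypothesis of this lemma, so there is no circularity. One should also record why the limit's image is contained in $\ell$ and not merely in the larger triangle $\overline{z_-z_0z_+}$: this is because a genuine solution of~\eqref{Eq_BrokenFlowLine} has $f\circ\gamma$ moving along direction $-e^{i\theta_+}$ for $t\le 0$ (hence on the segment $\overline{z_+z_0}$) and along $-e^{i(\theta_--\pi)}=e^{i\theta_-}$ for $t>0$ (hence on $\overline{z_0z_-}$), with value $z_0$ at $t=0$. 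Once this is in place, the lemma follows. Note that the quantitative $O(\nu)$ estimate for the middle interval is not strictly needed if one runs the contradiction argument for the whole statement at once, but it is worth including as it makes transparent which transition actually requires the compactness input.
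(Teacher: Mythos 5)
Your analysis of the ODE $\dot z_\nu = -\rho\circ\gamma_\nu\,e^{i\theta_\nu(t)}$, the behaviour on the three plateaus, and the $O(\nu)$ estimate for the middle transition interval $(-\nu,\nu)$ are all correct and match the paper's reasoning. However, the way you propose to handle the two outer transition intervals is circular: you invoke Proposition~\ref{Prop_ConvergentSubseqOfBrokenFlowLines}, but the paper's proof of that proposition (given later in the same appendix) itself relies on Lemma~\ref{Lem_DistToBrokenLine} --- both directly (``By Lemma~\ref{Lem_DistToBrokenLine} the image of $f\comp\gamma_0$ is contained in $\ell$'') and through Step~2 of the lemma's proof, which supplies the uniform $W^{1,2}$-smallness of the tails needed in Steps~1 and~3 of the proposition's proof. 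So you cannot use the compactness furnished by Proposition~\ref{Prop_ConvergentSubseqOfBrokenFlowLines} as an input here; you have anticipated a secondary circularity concern about the hypothesis on $G$, but missed this primary one about the logical order of the proofs.

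The paper instead closes the gap with a self-contained quantitative argument. Step~1 shows the image of $f\circ\gamma_\nu$ stays in the closed triangle $G$ (your directional observations give exactly this via the monotonicity of $\im f\circ\gamma_\nu$ and of $\im(e^{-i\theta_\pm}f\circ\gamma_\nu)$ on the half-lines $t\lessgtr 0$). Step~2 is the key: set $\rho_\e=\inf\{\rho(m)\mid f(m)\in G,\ \im f(m)\le\zeta-\e\}>0$ (positive by properness of $f$ and the absence of critical values in $G\setminus\{z_\pm\}$), and deduce an explicit time $T_\e=1+(1+\zeta)/(\rho_\e\sin\theta_-)$, \emph{independent of $\nu$}, by which $f\circ\gamma_\nu$ must have entered $B_\e(z_-)$: otherwise the lower bound on $\rho$ on the plateau $[\nu,\nu^{-1}]\supset[1,T_\e]$ would force $\im f$ to increase by more than its total available range. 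Taking $\nu_0=T_\e^{-1}$ ensures the outer transition $[\nu^{-1},\nu^{-1}+1]$ occurs only after the flow is already trapped near $z_-$, where it then stays by monotonicity; so no compactness argument is needed at all. I'd suggest replacing your contradiction/compactness step with this direct lower-bound-on-$\rho$ estimate, keeping the rest of your structure as is.
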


\begin{proof}
  The lemma is proved in three steps.

\setcounter{thestep}{0}  

\begin{step}\label{Step_BrokenFLAreInPreimOfTriangle}
    For any broken flow line $\gamma_\nu$ the image of the curve $f\circ\gamma_\nu\colon\R\to\C$ is contained in $G$.
  \end{step}

From~\eqref{Eq_PertBrokenFlowLine} we have $\tfrac d{dt}\im f\circ\gamma_\nu (t) = -\sin\theta_\nu(t)\rho\circ\gamma_\nu\le 0$ for $t\le 0$. Since $\lim_{t\to -\infty}f\circ\gamma_\nu (t)=z_+$ we conclude that $\im f\circ\gamma_\nu(t)\le \im z_+=\zeta$ for all $t\le 0$. Similarly, $\im f\circ\gamma_\nu(t)\le\im z_-=\zeta$ for all $t\ge 0$. Hence, the image of the curve $f\circ\gamma_\nu$ lies in the half--plane, which is bounded by the straight line through $z_-$ and $z_+$  and contains $z_0$. Arguing along similar lines, one also obtains that the image of $f\circ\gamma_\nu$ is contained in the half--plane bounded by the straight line through $z_\pm$ and $z_0$ and containing $z_\mp$.

  \begin{step}\label{Step_BehOfBrokenFlowLineAtInfty}
For any $\e>0$ there exist $T_\e>0$ and $\nu_0=\nu_0(\e)>0$ such that $f\comp\gamma_\nu(\pm t)\in B_\e(z_\mp)$ for all $t\ge T_\e$ and all $\nu\le\nu_0$.
  \end{step}

We prove that  $f\comp\gamma_\nu( t)\in B_\e(z_-)$ for all $t\ge T_\e$ and all $\nu\le\nu_0$. The rest can be proved similarly.

 For an arbitrary $\e>0$ denote
\begin{align*}
  \rho_\e &=\inf\bigl\{\rho(m)\mid f(m)\in G,\  \im f(m)\le\zeta-\e\bigr \}>0,\\ 
T_\e &=1+\frac {1+\zeta}{\rho_\e\sin\theta_-},\qquad \nu_0= T_\e^{-1}.
\end{align*}
We claim that $\im f\comp\gamma_\nu(T_\e)> \zeta-\e$ for any $\nu\le \nu_0$. Indeed, assume this is not the case, i.e. there exists $\nu\le\nu_0$ such that $\im f\comp\gamma_\nu(T_\e)\le \zeta-\e$.  Then for any $t\in [1,T_\e]$ we have $\im f\comp\gamma_\nu(t)\le \zeta-\e$ since the function $\im f\comp\gamma_\nu(t)$ is monotone for $t\ge 0$ as indicated in the proof of Step~\ref{Step_BrokenFLAreInPreimOfTriangle}. Hence, 
\begin{equation*}
\zeta\ge \im{ \bigl ( f\comp\gamma_\nu(T_\e) - f\comp\gamma_\nu(1)\bigr )}=
 \sin\theta_- \int_1^{T_\e} \rho\circ\gamma_\nu (t)\, dt\ge
  \sin\theta_- \rho_\e (T_\e-1)\ge 1+\zeta.
\end{equation*} 
This contradiction proves the inequality $\im f\comp\gamma_\nu(T_\e)> \zeta-\e$, which in turn implies that $\im f\comp\gamma_\nu(t)> \zeta-\e$ for all $t\ge T_\e$ and all $\nu\le\nu_0$. Arguing along similar lines and redenoting $T_\e, \nu_0$ if necessary one also obtains that the inequality $\im{\bigl ( e^{-i\theta_+}f\comp\gamma_\nu(t)  \bigr )}\ge \re (e^{-i\theta_+}z_-)-\e$ holds for all $t\ge T_\e$ and all $\nu\le\nu_0$. This implies Step~\ref{Step_BehOfBrokenFlowLineAtInfty}.



  \begin{step}
    We prove  the lemma.
  \end{step}

Pick any $\e>0$. Then by Step~\ref{Step_BehOfBrokenFlowLineAtInfty} there exist $T_\e>0$ and  $\nu_0\le T_\e^{-1}$ such that $\bigr |f\circ\gamma_\nu(t) - z_- \bigl |<\e$ holds for all $t\ge T_\e$ provided $\nu\le \nu_0$. Since for $t\in [\nu, \nu^{-1}]$ we have that $f\circ\gamma_\nu(t)$ lies on a straight line parallel to the straight line through $z_0$ and $z_-$, we obtain that inequality~\eqref{Eq_DistanceToBrokenLine}  
holds for all $t\ge\nu$. Using similar arguments one shows that inequality~\eqref{Eq_DistanceToBrokenLine} also holds for $t\le -\nu$. Furthermore, the length of the curve $f\circ\gamma_\nu(t),\ t\in [-\nu,\nu],$ is bounded by  $2\sqrt{\bar\rho}\nu$, where $\bar\rho=\sup \{\rho(m)\mid m\in M\}$.
 Redenoting $\nu_0$ if necessary we obtain that inequality~\eqref{Eq_DistanceToBrokenLine} holds for $t\in [-\nu,\nu]$ as well.
\end{proof}


\begin{proof}[Proof of Proposition~\ref{Prop_ConvergentSubseqOfBrokenFlowLines}]

The proof consists of the following four steps

\setcounter{thestep}{0}

\begin{step}\label{Step_1Prop_ConvergentSubseq}
  For any $\e>0$ there exists $\nu_0>0$ such that $d ( \gamma_\nu(t), m_-  )<\e$ for all $\nu\le\nu_0,\ t\ge \nu^{-1}+1$, and $\gamma_\nu\in\Gamma_\nu(m_-, m_+)$.
\end{step}

From the equality $\tfrac d{dt}\re (f\comp\gamma_\nu(t)) = -|\dot\gamma_\nu(t)|^2$, which is valid for all  $t\ge \nu^{-1}+1$, we obtain
\begin{equation}\label{Eq_AuxW12NormOfBFL}
  \int_{\nu^{-1}+1}^\infty |\dot\gamma_\nu(t)|^2\, dt= \re(f\comp\gamma_\nu(\nu^{-1}+1))-\re z_-
\end{equation}
Hence, the map $\beta_\nu(t)=\gamma_\nu(\nu^{-1}+1+t)$ belongs to $W^{1,2}(\R_+; M)$. Moreover, by Step~\ref{Step_BehOfBrokenFlowLineAtInfty} in the proof of Lemma~\ref{Lem_DistToBrokenLine} there exists $\nu_0=\nu_0(\e)$ such that $\| \beta_\nu \|_{W^{1,2}}<\e$ for all $\nu\le\nu_0$. Hence, Step~\ref{Step_1Prop_ConvergentSubseq} follows from the Sobolev embedding $W^{1,2}(\R_+;M)\hookrightarrow C^0(\R_+; M)$.

\begin{step}\label{Step_2Prop_ConvergentSubseq}
  For any $\e>0$ there exists $\nu_0=\nu_0(\e)>0$ such that $|\dot\gamma_\nu(t)|<\e$ for all $\nu\le\nu_0,\ t\in [\nu^{-1},\nu^{-1}+1]$, and $\gamma_\nu\in\Gamma_\nu(m_-, m_+)$.
\end{step}

By choosing local coordinates we can identify a neighbourhood of $m_-$ with $\R^{2n}$. Since $m_-$ is a nondegenerate critical point of $f_0=\re f$, we can assume that $f_0$ is a quadratic function in the local representation. Hence there exist positive constants $C$ and $\delta$ such that the inequality $\rho(x)\le C^2|x|^2$ holds whenever $|x|\le \delta$. Here $|\cdot |$ is the standard Euclidean norm on $\R^{2n}$.  Hence,
\begin{equation}\label{Eq_AuxLocEstimForBFL}
|\dot\gamma_\nu|^2 =\rho(\gamma_\nu)\le C^2|\gamma_\nu|^2
\end{equation}
provided $|\gamma_\nu|\le\delta$. Therefore, for any $\tau\ge 0$ we have
\begin{equation}\label{Eq_AuxLocEstimForBFL2}
  \begin{aligned}
  |\gamma_\nu(\nu^{-1}+\tau)| &= 
 \bigl | \gamma_\nu(\nu^{-1}+1) -\int_{\nu^{-1}+\tau}^{\nu^{-1}+1}\dot\gamma_\nu(t)\, dt  \bigr |\\
 &\le |\gamma_\nu(\nu^{-1}+1)|+ C\int_{\nu^{-1}+\tau}^{\nu^{-1}+1}|\gamma_\nu(t)|\, dt    
  \end{aligned}
 \end{equation}
provided $|\gamma_\nu(t)|<\delta$ for all $t\in [\nu^{-1}+\tau, \nu^{-1}+1]$. 

For any $\e>0$ such that $\e/C<\delta$ by Step~\ref{Step_1Prop_ConvergentSubseq} we can choose $\nu_0>0$ so small that the inequality
\[
|\gamma_\nu(\nu^{-1}+1)|< \frac {e^{-C}\e}C <\delta
\]
holds for all $\nu\le\nu_0$ and $\gamma_\nu\in\Gamma_\nu(m_-, m_+)$. In particular,~\eqref{Eq_AuxLocEstimForBFL2} holds for all $\tau$ sufficiently close to $1$. By the Gronwall--Bellman inequality we obtain
\begin{equation}\label{Eq_AuxLocEstimForBFL3}
|\gamma_\nu(\nu^{-1}+\tau)|\le|\gamma_\nu(\nu^{-1}+\tau)| e^{C(1-\tau)}<\e/C
\end{equation}
for all $\tau\in [0,1]$ sufficiently close to $1$. This implies in fact that~\eqref{Eq_AuxLocEstimForBFL3} holds for all $\tau\in [0,1]$. Then Step~\ref{Step_2Prop_ConvergentSubseq} follows by~\eqref{Eq_AuxLocEstimForBFL}.

\begin{step}\label{Step_3Prop_ConvergentSubseq}
For any $\e>0$ there exists $T_\e>0$ and $\nu_0\le T_\e^{-1}$ such that $d(\gamma_\nu(t), m_-)<\e$ for all $\nu\le\nu_0,\ t\ge T_\e$, and $\gamma_\nu\in\Gamma_\nu(m_-, m_+)$.  
\end{step}

Let $T_\e$ be as in Step~\ref{Step_BehOfBrokenFlowLineAtInfty} in the proof of Lemma~\ref{Lem_DistToBrokenLine}. From the equality  $\tfrac d{dt}\re{\bigl ( e^{-i\theta_-}f\comp\gamma_\nu(t)\bigr )} = |\dot\gamma_\nu(t)|^2$, which is valid for all  $t\in [ \nu, \nu^{-1}]$, we obtain 
\begin{equation*}
  \int_{T_\e}^{\nu^{-1}} |\dot\gamma_\nu(t)|^2\, dt
=\Bigl | \re{\bigl (e^{i\theta_-}f\comp\gamma_\nu(\nu^{-1})\bigr )}- 
  \re{\bigl (e^{i\theta_-}f\comp\gamma_\nu(T_\e)\bigr )}\Bigr |
\le 2\e
\end{equation*}
By~\eqref{Eq_AuxW12NormOfBFL} and Step~\ref{Step_2Prop_ConvergentSubseq} we obtain
\[
\begin{aligned}
  \int_{T_\e}^\infty|\dot\gamma_\nu|^2\, dt 
&=\int_{T_\e}^{\nu^{-1}}|\dot\gamma_\nu|^2\, dt 
+ \int_{\nu^{-1}}^{\nu^{-1}+1}|\dot\gamma_\nu|^2\, dt
+ \int_{\nu^{-1}+1}^{\infty}|\dot\gamma_\nu|^2\, dt\\
&\le 2\e +\e^2 +\e.
\end{aligned}
\]
Step~\ref{Step_3Prop_ConvergentSubseq} follows from the embedding $W^{1,2}(\R_+;M)\hookrightarrow C^0(\R_+; M)$.

\begin{step}
  We prove the proposition.
\end{step}

Since $\rho$ is bounded on $M$, we obtain that  $|\dot\gamma_j|^2\le \sup\rho <\infty$. Then with the help of the Ascoli-Arzela theorem we can find a subsequence $\gamma_{j_k}$, which converges to some $\gamma_0\in C^0(\mathbb R; M)$ on each compact interval. Then $\gamma_0\in C^1(\mathbb R\setminus\{0\}; M)$ and satisfies %
$\dot\gamma_0 +\cos\theta_0\, v_0+\sin\theta_0\, v_1=0$.  By Lemma~\ref{Lem_DistToBrokenLine} the image of  $f\comp\gamma_0$ is contained in $\ell=z_-z_0z_+$. Hence, the limit $\lim_{t\to \pm\infty}\gamma_0(t)$ must be a critical point of the vector field $\cos\theta_\mp v_0 +\sin\theta_\mp v_1$. Hence, $\lim_{t\to \pm\infty}\gamma_0(t)=m_\mp$, i.e. $\gamma_0$ is a solution of~\eqref{Eq_BrokenFlowLine}.

Choose any $\e>0$ sufficiently small. By Step~\ref{Step_3Prop_ConvergentSubseq} there  exist $T_\e>0$ and $N_\e>0$ such that  
\begin{equation}\label{Eq_AuxDistanceBetwBFLAndGamma0}
d(\gamma_{j_k}(t),\gamma_0(t))<\e
\end{equation}
provided $t\ge T_\e$ and $k\ge N_\e$. Arguing similarly, we also obtain that~\eqref{Eq_AuxDistanceBetwBFLAndGamma0}  holds for $t\le -T_\e$ (possibly after increasing $T_\e$). Since $\gamma_{j_k}$ converges on $[-T_\e, T_\e]$ we can find $N_\e'\ge N_\e$ such that~\eqref{Eq_AuxDistanceBetwBFLAndGamma0} also holds for $t\in [-T_\e, T_\e]$ provided $j_k\ge N_\e'$. This finishes the proof.
\end{proof}

Introduce the Banach manifold %
\begin{equation*}\label{Eq_AuxW12RM}
W^{1,2}_{m_-\!;\, m_+}=\{\, \gamma\in W^{1,2}(\mathbb R; M)\; |\; \lim\limits_{t\to\pm\infty}\gamma(t)=m_\mp  \}
\end{equation*}
and the vector bundle $\mathcal E\rightarrow W^{1,2}_{m_-\!;\, m_+}$, whose fiber at $\gamma$ is the Hilbert space $L^2(\gamma^*TM)$. The map $\sigma_\nu$ given by~\eqref{Eq_Map_sigma} can be interpreted as a section of $\mathcal E$. Similarly, the map~\eqref{Eq_DsigmaW12L2} can be interpreted as the covariant derivative of $\sigma_\nu$.  Then $\sigma_\nu$  is a Fredholm section~\cite{Salamon:90MorseTheory} with vanishing index, since the Morse indices of $m_+$ and $m_-$ are equal. Here $m_\pm$  is regarded as a critical point of $\re f$. Clearly, $\sigma_\nu^{-1}(0)=\Gamma_\nu(m_-;m_+)$.

\begin{lem}\label{Lem_TransversalityForBrokenFlowLines}
Let $L_\pm$ be the vanishing cycle corresponding to the segment $\overline{z_\pm z_0}$. If  $L_+$ and $L_-$ intersect transversely in $ f^{-1}(0)$, then there exists $\nu_0>0$ such that $\sigma_\nu$ intersects the zero section transversely for all $\nu\in (0,\nu_0)$. Moreover there exists a natural bijective correspondence between %
  $\Gamma_\nu(m_-;m_+)$ and $\Gamma_0(m_-;m_+)$ provided  $\nu\in (0,\nu_0)$.
\end{lem}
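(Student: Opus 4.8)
The statement has two parts: (i) transversality of the section $\sigma_\nu$ for small $\nu$, and (ii) a bijection $\Gamma_\nu(m_-;m_+)\cong\Gamma_0(m_-;m_+)$ for small $\nu$. The natural strategy is to treat both at once via a continuation/implicit-function-theorem argument, using Proposition~\ref{Prop_ConvergentSubseqOfBrokenFlowLines} for compactness. First I would establish the base case at $\nu=0$: the set $\Gamma_0(m_-;m_+)$ of broken flow lines is identified with $L_+\cap L_-$ in $f^{-1}(0)$ (as recalled in Subsection~\ref{Subsect_OutlineOfConstruction}), and the transversality of this intersection in the fiber should be shown to be equivalent to surjectivity of the linearised operator $D_\gamma\sigma_0$ (appropriately interpreted, since $\sigma_0$ has a mild discontinuity at $t=0$). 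Concretely, for a broken flow line $\gamma_0$, a Jacobi field $\eta$ along $\gamma_0$ decaying at both ends corresponds to a tangent vector in $T_{\gamma_0(0)}(L_+)\cap T_{\gamma_0(0)}(L_-)$: the decay at $-\infty$ forces $\eta(0)\in T_{\gamma_0(0)}L_+$ (since $L_+$ is the stable manifold of the flow for $t\le 0$), and the decay at $+\infty$ forces $\eta(0)\in T_{\gamma_0(0)}L_-$; transversality of $L_\pm$ makes this intersection trivial, hence $D_{\gamma_0}\sigma_0$ is injective, and since its index is $0$ (equal Morse indices of $m_\pm$ as critical points of $\re f$), it is an isomorphism. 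This also verifies hypothesis~(H\ref{Hyp_NondegOfBrokenFlowLines}) in the limit.

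\textbf{Passing to small $\nu$.} Next I would set up a parametrised section: consider the map $\sigma\colon (0,\nu_0)\times W^{1,2}_{m_-;m_+}\to\mathcal E$, or better, extend the parameter to include $\nu=0$ by working on a space of curves that are continuous at $t=0$ and $W^{2,2}_{loc}$ away from $0$, so that $\gamma\mapsto\dot\gamma + \cos\theta_\nu\,v_0+\sin\theta_\nu\,v_1$ makes sense uniformly in $\nu\in[0,\nu_0)$. Since $\theta_\nu\to\theta_0$ in an appropriate sense (e.g.\ in $L^1_{loc}$ and uniformly off a neighbourhood of $0$), the section $\sigma_\nu$ depends continuously on $\nu$ in the operator norm, and likewise its linearisation. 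By the base case, $D_{\gamma_0}\sigma_0$ is an isomorphism at each $\gamma_0\in\Gamma_0(m_-;m_+)$, which is a finite set (it equals $L_+\cap L_-$, finite by transversality and compactness from Theorem~\ref{Thm_AprioriC0Estimates}-type bounds, or simply finite since it is a transverse intersection of compact submanifolds if $f^{-1}(0)$ is compact). Then the implicit function theorem, applied near each $\gamma_0$, yields for each small $\nu$ a unique nearby solution $\gamma_\nu$ of~\eqref{Eq_PertBrokenFlowLine}, at which $D_{\gamma_\nu}\sigma_\nu$ is still an isomorphism (open condition). This produces an injective map $\Gamma_0(m_-;m_+)\to\Gamma_\nu(m_-;m_+)$ and shows $\sigma_\nu$ is transverse to the zero section along its image.

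\textbf{Surjectivity of the correspondence.} The remaining point — that every solution of~\eqref{Eq_PertBrokenFlowLine} arises this way for $\nu$ small — is exactly where Proposition~\ref{Prop_ConvergentSubseqOfBrokenFlowLines} enters. If not, there would be a sequence $\nu_j\to 0$ and $\gamma_{\nu_j}\in\Gamma_{\nu_j}(m_-;m_+)$ staying a definite distance from the images of the IFT-branches; by Proposition~\ref{Prop_ConvergentSubseqOfBrokenFlowLines} a subsequence converges in $C^0$ to some $\gamma_0\in\Gamma_0(m_-;m_+)$, and then (by elliptic bootstrapping / the same arguments used for exponential decay, upgrading $C^0$-convergence to, say, $W^{1,2}_{loc}$-convergence) $\gamma_{\nu_j}$ must eventually lie in the IFT-neighbourhood of that $\gamma_0$, contradicting uniqueness there. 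This gives surjectivity, hence a bijection $\Gamma_\nu(m_-;m_+)\cong\Gamma_0(m_-;m_+)$, and incidentally transversality of $\sigma_\nu$ everywhere on $\Gamma_\nu(m_-;m_+)$.

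\textbf{Main obstacle.} The delicate point is handling the non-smoothness of the limiting problem at $t=0$: $\theta_0$ has a jump there, so $\sigma_0$ is not a classical smooth section, and one must choose the right Banach space (continuity at $0$, $W^{2,2}$ on each half-line, with matching of one-sided limits) so that Fredholm theory and the implicit function theorem apply uniformly down to $\nu=0$. Equivalently, one can avoid $\nu=0$ entirely and run the IFT between nearby small values $\nu,\nu'$, using Proposition~\ref{Prop_ConvergentSubseqOfBrokenFlowLines} only to guarantee that the count $\#\Gamma_\nu$ stabilises; but identifying the stable count with $\#\Gamma_0=\#(L_+\cap L_-)$ still requires controlling the limit. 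Making the convergence $\gamma_{\nu_j}\to\gamma_0$ strong enough (not merely $C^0$) to feed into the quantitative IFT is the technical heart of the argument; I expect this to follow from the a priori estimates and exponential-decay techniques already developed in this section, together with the explicit near-$m_\pm$ control from Steps~\ref{Step_1Prop_ConvergentSubseq}--\ref{Step_3Prop_ConvergentSubseq} of the proof of Proposition~\ref{Prop_ConvergentSubseqOfBrokenFlowLines}.
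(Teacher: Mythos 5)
Your proposal follows the paper's own argument: non-degeneracy of $D_{\gamma_0}\sigma_0$ at $\nu=0$ is extracted from transversality of the Lagrangian thimbles (the paper via the cokernel formula $\dim\coker D_{\gamma_0}\sigma_0=\codim(T_m\mathcal S_-+T_m\mathcal U_+)$ citing Salamon's Theorem~3.3, you via the kernel of decaying Jacobi fields -- equivalent by index $0$), and Proposition~\ref{Prop_ConvergentSubseqOfBrokenFlowLines} then supplies exactly the $C^0$-compactness needed to run the implicit function theorem on the parametrised section over $W^{1,2}_{m_-;m_+}\times\mathbb R_\nu$, which produces the bijection. One small slip worth fixing: for $t\le 0$ the curve tends to $m_+$ as $t\to-\infty$, so the relevant thimble $\mathcal U_+$ is the \emph{unstable} manifold (not stable), and the kernel is naturally identified with $T_m\mathcal U_+\cap T_m\mathcal S_-$, which agrees with $T_mL_+\cap T_mL_-$ here only because the choice of $z_0$ (off all lines through pairs of critical values) makes $df$ carry these two tangent spaces onto distinct rays, forcing their intersection into $\ker df=T_mf^{-1}(0)$.
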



\begin{proof}
 Let $\mathcal U_+$ denote the unstable manifold of $m_+$ regarded as a critical point of $\re (e^{-i\theta_+}f)$. Similarly, let $\mathcal S_-$ denote the stable manifold of $m_-$ regarded as a critical point of $\re (e^{-i\theta_-}f)$.

Pick a point $m\in L_-\cap L_+\cong \mathcal S_-\cap\mathcal U_+$ and observe that $\mathcal S_-$ and $\mathcal U_+$ are the Lagrangian thimbles of $m_-$ and $m_+$ associated with the segments $\overline{z_0z_-}$ and $\overline{z_0z_+}$, respectively. Here $z_0$ is the origin. Then the hypothesis of the lemma implies that $\mathcal S_-$ and $\mathcal U_+$ intersect transversally at $m$.

Let $\gamma_0$ be the solution of~\eqref{Eq_BrokenFlowLine} corresponding to $m$. Denote by $D_{\gamma_0}\sigma_0$  the linearization of $\sigma_0$ at $\gamma_0$. As we have already remarked above,  $D_{\gamma_0}\sigma_0\colon W^{1,2}(\gamma_0^*TM)\rightarrow L^2(\gamma_0^*TM)$ is a Fredholm operator of index $0$. Moreover, it can be shown in the similar manner as in the proof of Theorem~3.3 in~\cite{Salamon:90MorseTheory} that $\dim\coker D_{\gamma_0}\sigma_0=\codim (T_m\mathcal S_-+T_m\mathcal U_+)$. Therefore $\dim\coker D_{\gamma_0}\sigma_0=0$ and hence $\dim\ker D_{\gamma_0}\sigma_0=0$. Thus we conclude that $\sigma_0$ intersects the zero-section transversely.

It follows from Proposition~\ref{Prop_ConvergentSubseqOfBrokenFlowLines} that there exists $\nu_0>0$ such that each solution of the equation $\sigma_\nu(\gamma_\nu)=0,\ \nu\in (0, \nu_0)$ is contained in a $C^0$--neighbourhood  $U(\gamma_0)$ of some $\gamma_0\in\sigma_0^{-1}(0)$. Notice that the linearization of $\sigma_\nu$ at $\gamma_\nu$ can be written in the form%
\begin{equation*}
 \begin{aligned}
  D_{\gamma_\nu}\sigma_\nu (\xi) &=\nabla_{\dot\gamma_\nu} \xi +\cos\theta_\nu\nabla_\xi v_0 +\sin\theta_\nu\nabla_\xi v_1\\%
  &=\cos\theta_\nu \bigl (\nabla_{\xi} v_0  - \nabla_{v_0} \xi\bigr ) +\sin\theta_\nu\bigl (\nabla_{\xi} v_1  - \nabla_{v_1} \xi\bigr ).
 \end{aligned}
\end{equation*}
where $\xi\in W^{1,2}(\gamma_\nu^*TM)$. Hence, redenoting $\nu_0$ if necessary,  we can assume that the linearization of $\sigma_\nu$ is non-degenerate at each $\gamma_\nu\in\Gamma_\nu(m_-; m_+)$ contained in $\bigcup_{\gamma_0}U(\gamma_0)$ for $\nu\in (0, \nu_0)$, since $\#\sigma_0^{-1}(0)=\# L_-\cap L_+<\infty$. Thus $\sigma_\nu$ intersects the zero-section transversely provided $\nu\le\nu_0$.

\medskip

Consider $\sigma_\nu$ as a section of $\pi^*\mathcal E$, where $\pi\colon W^{1,2}_{m_-\!;\, m_+}\times\mathbb R_\nu\rightarrow \mathbb R_\nu$ is the canonical projection. Then $\sigma_\nu$ is continuous and satisfies the hypothesis of the implicit function theorem. Therefore, %
 $\{ (\gamma,\nu)\; |\; \sigma_\nu(\gamma)=0,\ \nu\in [0, \nu_0)\}$ is homeomorphic to $\sigma_0^{-1}(0)\times  [0, \nu_0)$. This establishes the bijective correspondence between $\Gamma_0(m_-; m_+)$ and $\Gamma_\nu(m_-; m_+)$. 
\end{proof}

\begin{cor}
  If  $L_+$ and $L_-$ intersect transversely in $ f^{-1}(0)$, then there exists $\nu_0>0$ such that hypothesis~(H\ref{Hyp_NondegOfBrokenFlowLines}) holds provided $\nu\le\nu_0$.
\end{cor}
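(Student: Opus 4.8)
The plan is to read off the corollary from Lemma~\ref{Lem_TransversalityForBrokenFlowLines} together with the index computation for $\sigma_\nu$ recorded just before that lemma. Recall that, viewed as a section of the Banach bundle $\mathcal E\to W^{1,2}_{m_-;\,m_+}$, the map $\sigma_\nu$ is a Fredholm section: the asymptotic operators of its linearisation $D_\gamma\sigma_\nu$ at $t\to\pm\infty$ are the Hessians of $\re(e^{-i\theta_\pm}f)$ at $m_\pm$, which are nondegenerate because $m_\pm$ are nondegenerate critical points, hence invertible. Moreover $\Ind D_\gamma\sigma_\nu=0$: the Morse index of a critical point of the real part of a holomorphic function equals $n$ (its Hessian is nondegenerate with vanishing signature, being conjugate by $J$ to the Hessian of the imaginary part), so the index of $D_\gamma\sigma_\nu$ is the difference of the Morse indices of $m_+$ and $m_-$, namely $0$.

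Now fix $\nu_0>0$ as produced by Lemma~\ref{Lem_TransversalityForBrokenFlowLines}, so that for every $\nu\in(0,\nu_0)$ the section $\sigma_\nu$ is transverse to the zero section. By definition this means that for each $\gamma\in\sigma_\nu^{-1}(0)=\Gamma_\nu(m_-;m_+)$ the covariant derivative $D_\gamma\sigma_\nu\colon W^{1,2}(\gamma^*TM)\to L^2(\gamma^*TM)$ is surjective. A surjective Fredholm operator of index $0$ is automatically injective, hence an isomorphism; therefore $D_\gamma\sigma_\nu$ is an isomorphism for every $\gamma\in\Gamma_\nu(m_-;m_+)$, which is precisely the nondegeneracy condition in hypothesis~(H\ref{Hyp_NondegOfBrokenFlowLines}). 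This proves the corollary for the fixed pair $(m_-,m_+)$ chosen in Subsection~\ref{Subsect_AprioriC0Estim}; if one wants~(H\ref{Hyp_NondegOfBrokenFlowLines}) to hold simultaneously for all ordered pairs of critical points, it suffices to take the minimum of the finitely many constants $\nu_0$ supplied by the lemma, using that $f$ has only finitely many critical points.

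There is no real obstacle here: the argument is pure bookkeeping, and the only genuinely substantive ingredients — the Fredholm index being zero and the $C^0$-compactness that underlies the transversality statement — have already been established, the former in the discussion preceding Lemma~\ref{Lem_TransversalityForBrokenFlowLines} and the latter via Proposition~\ref{Prop_ConvergentSubseqOfBrokenFlowLines}. The step most worth stating explicitly is the passage from ``transverse to the zero section'' (surjectivity of $D_\gamma\sigma_\nu$) to ``$D_\gamma\sigma_\nu$ is an isomorphism'', which is exactly where the vanishing of the Fredholm index is used.
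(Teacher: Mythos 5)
Your proof is correct and follows exactly the reasoning the paper leaves implicit: the corollary is simply Lemma~\ref{Lem_TransversalityForBrokenFlowLines} (transversality, i.e.\ surjectivity of $D_\gamma\sigma_\nu$ on the zero set) combined with the observation, recorded just above the lemma, that $\sigma_\nu$ is Fredholm of index $0$, so surjectivity upgrades to bijectivity. Your closing remark about taking a minimum of $\nu_0$ over the finitely many critical pairs if one wants (H\ref{Hyp_NondegOfBrokenFlowLines}) uniformly is a sensible clarification, though the corollary as stated already concerns the fixed pair $(m_-,m_+)$ of the surrounding discussion.
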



\begin{thebibliography}{Don2}

\bibitem[APS]{AtiyahPatodiSinger:76_SpectralAsymmetryIII}
M.~F. Atiyah, V.~K. Patodi, and I.~M. Singer.
\newblock Spectral asymmetry and {R}iemannian geometry. {III}.
\newblock {\em Math. Proc. Cambridge Philos. Soc.}, 79(1):71--99, 1976.

\bibitem[Bea]{Beals:77_CharOfPseudodiffOp}
R.~Beals.
\newblock Characterization of pseudodifferential operators and applications.
\newblock {\em Duke Math. J.}, 44(1):45--57, 1977.


\bibitem[BH]{BanyagaHurtubise:09_MorseBott}
A. Banyaga and D.~E. Hurtubise.
\newblock The {M}orse-{B}ott inequalities via a dynamical systems approach.
\newblock {\em Ergodic Theory Dynam. Systems}, 29(6):1693--1703, 2009.

\bibitem[BS]{BryantSalamon:89}
R.~L. Bryant and S.~M. Salamon.
\newblock {On the construction of some complete metrics with exceptional
  holonomy}.
\newblock {\em Duke Math. J.}, 58(3):829--850, 1989.

\bibitem[CGS]{SalamonEtAl:00}
K. Cieliebak, A.~R. Gaio, and D.~A. Salamon.
\newblock {{$J$}-holomorphic curves, moment maps, and invariants of
  {H}amiltonian group actions}.
\newblock {\em Internat. Math. Res. Notices}, (16):831--882, 2000.

\bibitem[Cor]{Corlette:88_FlatGbundles}
K. Corlette.
\newblock Flat {$G$}-bundles with canonical metrics.
\newblock {\em J. Differential Geom.}, 28(3):361--382, 1988.

\bibitem[Don1]{Donaldson:87_TwistedHarmonicMaps}
S.~K. Donaldson.
\newblock Twisted harmonic maps and the self-duality equations.
\newblock {\em Proc. London Math. Soc. (3)}, 55(1):127--131, 1987.

\bibitem[Don2]{Donaldson:02_FloerHomologyGps}
S.~K. Donaldson.
\newblock {\em Floer homology groups in {Y}ang-{M}ills theory}, volume 147 of
  {\em Cambridge Tracts in Mathematics}.
\newblock Cambridge University Press, Cambridge, 2002.
\newblock With the assistance of M. Furuta and D. Kotschick.


\bibitem[DS]{DonaldsonSegal:09}
S.~Donaldson and E.~Segal.
\newblock Gauge theory in higher dimensions, {II}.
\newblock In {\em Surveys in differential geometry. {V}olume {XVI}. {G}eometry
  of special holonomy and related topics}, volume~16 of {\em Surv. Differ.
  Geom.}, pages 1--41. Int. Press, Somerville, MA, 2011.


\bibitem[Fan]{Fan:96}
H. Fan.
\newblock Half de {R}ham complexes and line fields on odd-dimensional
  manifolds.
\newblock {\em Trans. Amer. Math. Soc.}, 348(8):2947--2982, 1996.

\bibitem[FJR]{FanJarvisRuan:07_WittenEqn}
H.~Fan, T.~J. Jarvis, and Y.~Ruan.
\newblock {The {W}itten equation, mirror symmetry and quantum singularity
  theory}.
\newblock {\em arXiv:0712.4021v3}.

\bibitem[Flo]{Floer:88_UnregGradientFlow}
A.~Floer.
\newblock The unregularized gradient flow of the symplectic action.
\newblock {\em Comm. Pure Appl. Math.}, 41(6):775--813, 1988.

\bibitem[Fre]{Freed:09_RemarksOnChernSimons}
D.~S. Freed.
\newblock Remarks on {C}hern-{S}imons theory.
\newblock {\em Bull. Amer. Math. Soc. (N.S.)}, 46(2):221--254, 2009.

\bibitem[GS]{GaioSalamon:05}
A.~R. Gaio and D.~A. Salamon.
\newblock {Gromov-{W}itten invariants of symplectic quotients and adiabatic
  limits}.
\newblock {\em J. Symplectic Geom.}, 3(1):55--159, 2005.


\bibitem[Hay]{Haydys:09_GaugeTheory_jlms}
A.~Haydys.
\newblock Gauge theory, calibrated geometry and harmonic spinors.
\newblock {\em J. Lond. Math. Soc.}, http://dx.doi.org/10.1112/jlms/jds008,
  2012.


\bibitem[Hit]{Hitchin:87}
N.~J. Hitchin.
\newblock {The self-duality equations on a Riemann surface}.
\newblock {\em Proc. Lond. Math. Soc., III. Ser.}, 55:59--126, 1987.


\bibitem[KW]{KapustinWitten:07_ElectrMagnDuality}
A. Kapustin and E. Witten.
\newblock Electric-magnetic duality and the geometric {L}anglands program.
\newblock {\em Commun. Number Theory Phys.}, 1(1):1--236, 2007.

\bibitem[RR1]{RabinovichRoch04_WienerAlgebras}
V.~Rabinovich and S.~Roch.
\newblock {W}iener algebras of operators, and applications to
  pseudodifferential operators.
\newblock {\em Z. Anal. Anwendungen}, 23(3):437--482, 2004.

\bibitem[RR2]{RabinovichRoch:08_AgmonsEstimates}
V.~Rabinovich and S.~Roch.
\newblock {A}gmon's type estimates of exponential behavior of solutions of
  systems of elliptic partial differential equations. {A}pplications to
  {S}chrodinger, {M}oisil-{T}heodorescu and {D}irac operators.
\newblock {\em arXiv:0802.3963}, 2008.


\bibitem[RS1]{RobbinSalamon:93_MaslovIndexForPaths}
J. Robbin and D. Salamon.
\newblock The {M}aslov index for paths.
\newblock {\em Topology}, 32(4):827--844, 1993.

\bibitem[RS2]{RobbinSalamon:01_AsymptBehOfHolStrips}
J.~Robbin and  D.~Salamon
\newblock Asymptotic behaviour of holomorphic strips.
\newblock {\em Ann. Inst. H. Poincar\'e Anal. Non Lin\'eaire}, 18(5):573--612,
  2001.


\bibitem[Sal1]{Salamon:90MorseTheory}
D. Salamon.
\newblock Morse theory, the {C}onley index and {F}loer homology.
\newblock {\em Bull. London Math. Soc.}, 22(2):113--140, 1990.

\bibitem[Sal2]{Salamon:97_LecturesOnFloerHomology}
D. Salamon.
\newblock Lectures on {F}loer homology.
\newblock In {\em Symplectic geometry and topology ({P}ark {C}ity, {UT},
  1997)}, volume~7 of {\em IAS/Park City Math. Ser.}, pages 143--229. Amer.
  Math. Soc., Providence, RI, 1999.

\bibitem[Sei1]{Seidel:01VanishingCyclesI}
P. Seidel.
\newblock Vanishing cycles and mutation.
\newblock In {\em European {C}ongress of {M}athematics, {V}ol. {II}
  ({B}arcelona, 2000)}, volume 202 of {\em Progr. Math.}, pages 65--85.
  Birkh\"auser, Basel, 2001.

\bibitem[Sei2]{Seidel:08FukayaCategories}
P. Seidel.
\newblock {\em Fukaya categories and {P}icard-{L}efschetz theory}.
\newblock Zurich Lectures in Advanced Mathematics. European Mathematical
  Society (EMS), Z\"urich, 2008.
  
\bibitem[Shu]{Shubin:91_SpectralTheoryOnNoncompMflds}
M.~A. Shubin.
\newblock Spectral theory of elliptic operators on noncompact manifolds.
\newblock {\em Ast\'erisque}, (207):5, 35--108, 1992.
\newblock M{\'e}thodes semi-classiques, Vol. 1 (Nantes, 1991).

\bibitem[SZ]{SalamonZehnder:92_MorseTheory}
D. Salamon and E. Zehnder.
\newblock Morse theory for periodic solutions of {H}amiltonian systems and the
  {M}aslov index.
\newblock {\em Comm. Pure Appl. Math.}, 45(10):1303--1360, 1992.

\bibitem[VW]{VafaWitten:94_StrongCouplingTest}
C. Vafa and E. Witten.
\newblock A strong coupling test of {$S$}-duality.
\newblock {\em Nuclear Phys. B}, 431(1-2):3--77, 1994.


\bibitem[Wit1]{Witten:10_AnalyticContinuation}
E.~Witten.
\newblock Analytic continuation of {C}hern-{S}imons theory.
\newblock In {\em Chern-{S}imons gauge theory: 20 years after}, volume~50 of
  {\em AMS/IP Stud. Adv. Math.}, pages 347--446. Amer. Math. Soc., Providence,
  RI, 2011.

\bibitem[Wit2]{Witten:11_NewLookAtPathIntegral}
E.~Witten.
\newblock A new look at the path integral of quantum mechanics.
\newblock In {\em Surveys in differential geometry. {V}olume {XV}.
  {P}erspectives in mathematics and physics}, volume~15 of {\em Surv. Differ.
  Geom.}, pages 345--419. Int. Press, Somerville, MA, 2011.


\bibitem[Wit3]{Witten:12_FivebranesAndKnots_QT}
E.~Witten.
\newblock Fivebranes and knots.
\newblock {\em Quantum Topology}, 3(1):1--137, 2012.


\end{thebibliography}
\end{document}